\newtheorem{theorem}{Theorem}
\theoremstyle{plain}
\newtheorem{corollary}{Corollary}
\newtheorem{definition}{Definition}
\newtheorem{example}{Example}
\newtheorem{proposition}{Proposition}
\newtheorem{remark}{Remark}
\numberwithin{equation}{section}
\begin{document}
\title[Non-Newtonian Approach to $C^{\ast }-$algebras]{Non-Newtonian
Approach to $C^{\ast }-$Algebras}
\author{Nilay DE\u{G}\.{I}RMEN}
\address[N. De\u{g}irmen]{Ondokuz Mayis University Faculty of Art and
Sciences Deparment of Mathematics Samsun, Turkey}
\email[N. De\u{g}irmen]{nilay.sager@omu.edu.tr}
\subjclass[2010]{ 46L05, 11U10.}
\keywords{$C^{\ast }-$algebra, non-Newtonian calculus, homomorphism, ideal.}

\begin{abstract}
In this paper, we study the non-Newtonian version of $C^{\ast }-$algebras.
Further, we generalize some results which hold for the classical $C^{\ast }-$%
algebras. We also discuss some illustrative examples to show accuracy and
effectiveness of the new findings. If we take the identity function $I$
instead of the generators $\alpha $ and $\beta $ in the construction of the
set $%
\mathbb{C}
\left( N\right) $, then non-Newtonian $C^{\ast }-$algebras turn into the
classical $C^{\ast }-$algebras, so our results are stronger than some
knowledge and facts in the most existing literature.
\end{abstract}

\maketitle

\section{Introduction and Preliminaries}

Grossman and Katz \cite{1} introduced non-Newtonian calculus as an
alternative to classical calculus in the period from 1967 till 1972. They
defined an infinite family of calculus which includes some special calculi
such as geometric calculus, harmonic calculus, bigeometric calculus,
anageometric calculus. Since this structure has been developed very fast in
recent times due to excellent applications in different fields of
mathematical sciences including engineering, economy, biology, approximation
theory, probability theory, weighted calculus etc., it has attracted
considerable interest from many authors. There is a counterpart in each
member of non-Newtonian calculus class of all structures used in classical
calculus. So, there are many studies in the literature that extend some
concepts in classical calculus to those in non-Newtonian calculus. Tekin and
Ba\c{s}ar \cite{2} constructed the $\ast -$complex field of non-Newtonian
complex numbers which is of great importance for our this work and proved
some basic properties. For other related studies on non-Newtonian calculus,
we recommend \cite{3,4,5,6,7,8}.

The theory of $C^{\ast }-$algebra is one of the most popular and important
research areas in operator theory and functional analysis, which was studied
by many mathematicians in a long period of time. It is well known that some
related applications have been widely studied in various disciplines of
mathematics, theoretical physics and noncommutative geometry. Some of areas
studied on can be expressed as the theory of operators on a Hilbert space,
statistical physics and quantum field theory, the theory of representations
of groups and symmetric algebras and the theory of dynamical systems.

Now, we recollect some basic definitions, notations and results that will be
used in other sections.

A complete ordered field is called an arithmetic if its realm is a subset of 
$%
\mathbb{R}
.$ A generator is a one-to-one function whose domain $%
\mathbb{R}
$ and whose range is a subset of $%
\mathbb{R}
.$ Let $\alpha $ be a generator with range $A.$ We denote by $%
\mathbb{R}
_{\alpha }$ the range of generator $\alpha .$ Also, the elements of $%
\mathbb{R}
_{\alpha }$ (or $%
\mathbb{R}
\left( N\right) _{\alpha }$) are called non-Newtonian real numbers. Taking $%
\alpha =I$, the generator $\alpha $ generates the classical arithmetic and $%
\mathbb{R}
_{\alpha }=%
\mathbb{R}
$. Let $\alpha $ and $\beta $ be arbitrarily chosen generators which image
the set $%
\mathbb{R}
$ to $A$ and $B$ respectively and $\ast -(^{\prime \prime }$star-$^{\prime
\prime })$ calculus also be the ordered pair of arithmetics $\left( \alpha -%
\text{arithmetic},\beta -\text{arithmetic}\right) $. The following notations
will be used. All definitions given for $\alpha -$arithmetic are also valid
for $\beta -$arithmetic.%
\begin{equation*}
\begin{tabular}{lll}
& $\alpha -$arithmetic & $\beta -\text{arithmetic}$ \\ 
Realm & \multicolumn{1}{c}{$A\left( =%
\mathbb{R}
_{\alpha }=%
\mathbb{R}
\left( N\right) _{\alpha }\right) $} & \multicolumn{1}{c}{$B\left( =%
\mathbb{R}
_{\beta }=%
\mathbb{R}
\left( N\right) _{\beta }\right) $} \\ 
Summation & \multicolumn{1}{c}{$y\overset{.}{+}z=\alpha \left\{ \alpha
^{-1}\left( y\right) +\alpha ^{-1}\left( z\right) \right\} $} & 
\multicolumn{1}{c}{$\ddot{+}$} \\ 
Subtraction & \multicolumn{1}{c}{$y\overset{.}{-}z=\alpha \left\{ \alpha
^{-1}\left( y\right) -\alpha ^{-1}\left( z\right) \right\} $} & 
\multicolumn{1}{c}{$\ddot{-}$} \\ 
Multiplication & \multicolumn{1}{c}{$y\overset{.}{\times }z=\alpha \left\{
\alpha ^{-1}\left( y\right) \times \alpha ^{-1}\left( z\right) \right\} $} & 
\multicolumn{1}{c}{$\ddot{\times}$} \\ 
Division & \multicolumn{1}{c}{$y\overset{.}{/}z=\frac{y}{z}\alpha =\alpha
\left\{ \frac{\alpha ^{-1}\left( y\right) }{\alpha ^{-1}\left( z\right) }%
\right\} ~\left( z\neq \overset{.}{0}\right) $} & \multicolumn{1}{c}{$\ddot{/%
}~$} \\ 
Ordering & \multicolumn{1}{c}{$y\overset{.}{\leq }z\Longleftrightarrow
\alpha ^{-1}\left( y\right) \leq \alpha ^{-1}\left( z\right) $} & 
\multicolumn{1}{c}{$\overset{..}{\leq }$}%
\end{tabular}%
\end{equation*}

There are the following three properties for the isomorphism from $a-$%
arithmetic to $\beta -$arithmetic that is the unique function $\imath $%
(iota).

1. $\imath $\ is one-to-one.

2. $\imath $\ is on $A$\ and onto $B$.

3. For all $u$,$v\in A$,%
\begin{eqnarray*}
\ \iota \left( u\dot{+}v\right) &=&\iota \left( u\right) \ddot{+}\iota
\left( v\right) ,~\iota \left( u\dot{-}v\right) =\iota \left( u\right) \ddot{%
-}\iota \left( v\right) , \\
\iota \left( u\dot{\times}v\right) &=&\iota \left( u\right) \ddot{\times}%
\iota \left( v\right) ,~\iota \left( u\dot{/}v\right) =\iota \left( u\right) 
\ddot{/}\iota \left( v\right) ,v\neq \dot{0} \\
u &\dot{<}&v\Longleftrightarrow \iota \left( u\right) \ddot{<}\iota \left(
v\right) , \\
\iota \left( u\right) &=&\beta \left\{ \alpha ^{-1}\left( u\right) \right\} ,
\end{eqnarray*}%
Also, for every integer $n$, we set $\iota \left( \dot{n}\right) =\ddot{n}$.

A $\alpha -$positive number is a number $x$ with $\overset{.}{0}\overset{.}{<%
}x$ and a $\alpha -$negative number is a number with $x\overset{.}{<}\overset%
{.}{0}.$ $\alpha -$zero and $\alpha -$one numbers are denoted by $\overset{.}%
{0}=\alpha \left( 0\right) $ and $\overset{.}{1}=\alpha \left( 1\right) ,$
and the set of $\alpha -$positive numbers is denoted by $%
\mathbb{R}
_{\alpha }^{+}$ (or $%
\mathbb{R}
\left( N\right) _{\alpha }^{+}$). Also, $\alpha \left( -p\right) =\alpha
\left\{ -\alpha ^{-1}\left( \overset{.}{p}\right) \right\} =\overset{.}{-}%
\overset{.}{p}$ for all $p\in 
\mathbb{Z}
^{+}.$ An open interval on $%
\mathbb{R}
_{\alpha }$ for $a,b\in 
\mathbb{R}
_{\alpha }$ with $a\overset{.}{<}b$ is represented by $\left( a,b\right)
_{N}=\left\{ x\in 
\mathbb{R}
_{\alpha }:a\overset{.}{<}x\overset{.}{<}b\right\} .$ The $\alpha -$absolute
value of $x\in A$ is defined by%
\begin{equation*}
\overset{.}{\mid }x\overset{.}{\mid }=\left\vert x\right\vert _{\alpha
}=\left\{ 
\begin{array}{c}
x,\text{ \ \ \ \ \ \ \ \ \ \ }if\text{ \ }\overset{.}{0}\overset{.}{<}x \\ 
\overset{.}{0},\text{ \ \ \ \ \ \ \ \ \ }if\text{ \ }\overset{.}{0}=x \\ 
\overset{.}{0}\overset{.}{-}x,\text{ \ \ \ \ }if\text{ \ \ }x\overset{.}{<}%
\overset{.}{0}%
\end{array}%
\right. .
\end{equation*}

Let $\overset{..}{b}\in B\subseteq 
\mathbb{R}
$. Then, the number $\overset{..}{b}\overset{..}{\times }\overset{..}{b}$ is
called the $\beta -$ square of $\overset{..}{b}$ and is denoted by $b^{%
\overset{..}{2}}.$ Let $\overset{..}{b}$ be a nonnegative number in $B$.
Then, $\beta \left[ \sqrt{\beta ^{-1}\left( \overset{..}{b}\right) }\right] $
is called the $\beta -$ square root of $\overset{..}{b}$ and is denoted by $%
\sqrt[..]{\overset{..}{b}}$ \cite{1,3}.

Let $\left( a_{n}\right) \subset 
\mathbb{R}
_{\alpha }.$ An infinite sum%
\begin{equation*}
a_{1}\overset{.}{+}a_{2}\overset{.}{+}...\overset{.}{+}a_{n}\overset{.}{+}%
...=_{\alpha }\underset{n=1}{\overset{\infty }{\sum }}a_{n}
\end{equation*}%
is called the non-Newtonian real number series or $\alpha -$series. If $%
_{\alpha }\underset{n=1}{\overset{\infty }{\sum }}a_{n}^{{}}$ is
non-Newtonian real number series, then a sequence $\left\{ S_{m}\right\} $
with the general term $S_{m}=_{\alpha }\underset{n=1}{\overset{m}{\sum }}%
a_{n}^{{}}$ is called non-Newtonian partial sums sequence of the series $%
_{\alpha }\underset{n=1}{\overset{\infty }{\sum }}a_{n}^{{}}.$ If the
sequence $\left\{ S_{m}\right\} $ is a $\alpha -$convergent, then it is said
that the series $_{\alpha }\underset{n=1}{\overset{\infty }{\sum }}%
a_{n}^{{}} $ is $\alpha -$convergent. If $^{\alpha }\underset{m\rightarrow
\infty }{\lim }S_{m}=S,$ then it is written $_{\alpha }\underset{n=1}{%
\overset{\infty }{\sum }}a_{n}^{{}}=S.$ If the limit $^{\alpha }\underset{%
m\rightarrow \infty }{\lim }S_{m}$ is not available or equal to $-\infty $
or $+\infty ,$ then it is said that the series $_{\alpha }\underset{n=1}{%
\overset{\infty }{\sum }}a_{n}^{{}}$ is $\alpha -$divergent \cite{7}.

The concepts of $\alpha -$convergent sequence, non-Newtonian metric space,
non-Newtonian completeness, non-Newtonian upper bound, non-Newtonian
supremum, non-Newtonian open set, non-Newtonian closed set are discussed in
detail in \cite{1,2,3,7,8}.

Let $\overset{.}{a}\in \left( A,\overset{.}{+},\overset{.}{-},\overset{.}{%
\times },\overset{.}{/},\overset{.}{\leq }\right) $ and $\overset{..}{b}\in
\left( B,\overset{..}{+},\overset{.}{-},\overset{..}{\times },\overset{..}{/}%
,\overset{..}{\leq }\right) $ be arbitrarily chosen elements from
corresponding arithmetics. Then, the ordered pair $\left( \overset{.}{a},%
\overset{..}{b}\right) $ is called as a $\ast -$ point. The set of all $\ast
-$ points is called the set of $\ast -$complex numbers (non-Newtonian
complex numbers) and is denoted by $%
\mathbb{C}
^{\ast }$ or $%
\mathbb{C}
\left( N\right) $ that is,%
\begin{equation*}
\mathbb{C}
\left( N\right) =\left\{ \left( \overset{.}{a},\overset{..}{b}\right) :\text{
}\overset{.}{a}\in A\subseteq 
\mathbb{R}
,\overset{..}{b}\in B\subseteq 
\mathbb{R}
\right\} .
\end{equation*}%
The set $%
\mathbb{C}
\left( N\right) $ forms a field and a Banach space with the algebraic
operations $\oplus $ and $\odot $ defined on $%
\mathbb{C}
\left( N\right) $ and the norm $\overset{..}{\parallel }.\overset{..}{%
\parallel }$ defined by%
\begin{eqnarray*}
\oplus &:&%
\mathbb{C}
\left( N\right) \times 
\mathbb{C}
\left( N\right) \rightarrow 
\mathbb{C}
\left( N\right) , \\
\left( z^{\ast },w^{\ast }\right) &\rightarrow &z^{\ast }\oplus w^{\ast
}=\left( \overset{.}{a_{1}},\overset{..}{b_{1}}\right) \oplus \left( \overset%
{.}{a_{2}},\overset{..}{b_{2}}\right) =\left( \overset{.}{a_{1}}\overset{.}{+%
}\overset{.}{a_{2}},\overset{..}{b_{1}}\overset{..}{+}\overset{..}{b_{2}}%
\right) , \\
\odot &:&%
\mathbb{C}
\left( N\right) \times 
\mathbb{C}
\left( N\right) \rightarrow 
\mathbb{C}
\left( N\right) , \\
\left( z^{\ast },w^{\ast }\right) &\rightarrow &z^{\ast }\odot w^{\ast
}=\left( \overset{.}{a_{1}},\overset{..}{b_{1}}\right) \odot \left( \overset{%
.}{a_{2}},\overset{..}{b_{2}}\right) =\left( \alpha \left(
a_{1}a_{2}-b_{1}b_{2}\right) ,\beta \left( a_{1}b_{2}+b_{1}a_{2}\right)
\right) , \\
\overset{..}{\parallel }.\overset{..}{\parallel } &:&%
\mathbb{C}
\left( N\right) \rightarrow 
\mathbb{R}
_{\beta },z^{\ast }\rightarrow \overset{..}{\parallel }z^{\ast }\overset{..}{%
\parallel }=\sqrt[..]{\left[ \imath \left( \overset{.}{a}_{1}\overset{.}{-}%
\overset{.}{0}\right) \right] ^{\overset{..}{2}}\overset{..}{+}\left( 
\overset{..}{b}_{1}\overset{..}{-}\overset{..}{0}\right) ^{\overset{..}{2}}}%
=\beta \left( \sqrt{a_{1}^{2}+b_{1}^{2}}\right)
\end{eqnarray*}%
for all $z^{\ast }=\left( \overset{.}{a}_{1},\overset{..}{b}_{1}\right)
,w^{\ast }=\left( \overset{.}{a}_{2},\overset{..}{b}_{2}\right) \in 
\mathbb{C}
\left( N\right) $ where $0_{%
\mathbb{C}
\left( N\right) }=\left( \overset{.}{0},\overset{..}{0}\right) .$ The set $%
\mathbb{C}
\left( N\right) $ is also called $\ast -$complex field. In addition, $%
\overset{..}{\parallel }z^{\ast }\odot w^{\ast }\overset{..}{\parallel }=%
\overset{..}{\parallel }z^{\ast }\overset{..}{\parallel }\overset{..}{\times 
}\overset{..}{\parallel }w^{\ast }\overset{..}{\parallel }$ for all $z^{\ast
},w^{\ast }\in 
\mathbb{C}
\left( N\right) $ \cite{2}.

The $\ast -$complex conjugate $\overline{z^{\ast }}$ of a $\ast -$complex
number $z^{\ast }=\left( \overset{.}{a},\overset{..}{b}\right) \in 
\mathbb{C}
\left( N\right) $ is defined by%
\begin{equation*}
\overline{z^{\ast }}=\left( \alpha \left( a\right) ,\beta \left\{ -\beta
^{-1}\left( \overset{..}{b}\right) \right\} \right) =\left( \overset{.}{a},%
\overset{..}{-}\overset{..}{b}\right)
\end{equation*}%
and $\overset{..}{\parallel }z^{\ast }\odot \overline{z^{\ast }}\overset{..}{%
\parallel }=\overset{..}{\parallel }z^{\ast }\overset{..}{\parallel }^{%
\overset{..}{2}}$ \cite{5}.

For $z^{\ast }=\left( \overset{.}{a},\overset{..}{b}\right) \in 
\mathbb{C}
\left( N\right) $ and $w^{\ast }=\left( \overset{.}{c},\overset{..}{d}%
\right) \in 
\mathbb{C}
\left( N\right) -\left\{ 0_{%
\mathbb{C}
\left( N\right) }\right\} ,$ the $\ast -$division $z\oslash w$ is defined by%
\begin{equation*}
z^{\ast }\oslash w^{\ast }=\left( \overset{.}{a},\overset{..}{b}\right)
\oslash \left( \overset{.}{c},\overset{..}{d}\right) =\left( \alpha \left\{ 
\frac{ac+bd}{c^{2}+d^{2}}\right\} ,\beta \left\{ \frac{bc-ad}{c^{2}+d^{2}}%
\right\} \right)
\end{equation*}%
\cite{5}.

By simple calculations, it can be shown that $\overline{z^{\ast }\oslash
w^{\ast }}=\overline{z^{\ast }}\oslash \overline{w^{\ast }}$ for $z^{\ast
}=\left( \overset{.}{a},\overset{..}{b}\right) \in 
\mathbb{C}
\left( N\right) $ and $w^{\ast }=\left( \overset{.}{c},\overset{..}{d}%
\right) \in 
\mathbb{C}
\left( N\right) -\left\{ 0_{%
\mathbb{C}
\left( N\right) }\right\} $ which is used in Proposition 4 ii). Also, we can
denote non-Newtonian complex number $z^{\ast }=\left( \overset{.}{a},\overset%
{..}{b}\right) $ by $\left( \overset{.}{a},\overset{..}{0}\right) \oplus i_{%
\mathbb{C}
\left( N\right) }\odot \left( \overset{.}{0},\overset{..}{b}\right) $ where $%
i_{%
\mathbb{C}
\left( N\right) }=\left( \overset{.}{0},\overset{..}{1}\right) ,\left( i_{%
\mathbb{C}
\left( N\right) }\right) ^{2}=\ominus 1_{%
\mathbb{C}
\left( N\right) }=\ominus \left( \overset{.}{1},\overset{..}{0}\right) .$

\begin{definition}
A conjugate-linear map $x\rightarrow x^{\ast }$ of a complex algebra $%
\mathbb{A}$ into $\mathbb{A}$ is called an involution on $\mathbb{A}$ if the
following properties hold for all $x,y\in \mathbb{A}:$
\end{definition}

\textit{(ii) }$\left( x^{\ast }\right) ^{\ast }=x,$

\textit{(iii) }$\left( xy\right) ^{\ast }=y^{\ast }x^{\ast }.$

\textit{The pair }$\left( \mathbb{A},\ast \right) $\textit{\ is called an }$%
\ast -$\textit{algebra. A Banach }$\ast -$\textit{algebra is a }$\ast -$%
\textit{algebra }$\mathbb{A}$\textit{\ together with a complete
submultiplicative norm such that }$||x^{\ast }||=||x||$\textit{\ for }$x\in 
\mathbb{A}$\textit{. If, in addition, A has a unit such that }$||1||=1$%
\textit{, we call }$\mathbb{A}$\textit{\ a unital Banach }$\ast -$\textit{%
algebra. A }$C^{\ast }-$\textit{algebra is a Banach }$\ast -$\textit{algebra 
}$\mathbb{A}$\textit{\ such that for all }$x$\textit{\ in }$\mathbb{A}$%
\begin{equation*}
\left\Vert x^{\ast }x\right\Vert =\left\Vert x\right\Vert ^{2}
\end{equation*}%
\textit{\cite{9}.}

Motivated by these facts, our purpose in the current study is to introduce
the concept of a $C^{\ast }-$algebra with respect to the $\ast -$calculus
using $\ast -$complex field $%
\mathbb{C}
\left( N\right) $ instead of complex field $%
\mathbb{C}
$ in its classical definition and is to give some of its properties. Also,
we construct some nontrivial examples to validate our hypotheses and to
verify the usability of our findings. Comparing with works with respect to $%
C^{\ast }-$algebras in the literature, we believe that our results will
create new and different perspective in functional analysis and will serve
as an introduction and standard reference for the specialized articles in
the future works and applications on current research topics in the subject.

\section{Main Results}

In the literature we have observed that some topological concepts have not
been defined for $\ast -$complex field $%
\mathbb{C}
\left( N\right) $ in the sense of $\ast -$calculus. Therefore, before giving
our main findings, let us first introduce some of the concepts which will be
needed in the sequel.

\begin{definition}
$\delta $ $N-$neighborhood of a $\ast -$point $z_{0}\in 
\mathbb{C}
\left( N\right) $ is the set of all $\ast -$complex numbers $z$ such that $%
\overset{..}{\parallel }z\ominus z_{0}\overset{..}{\parallel }\overset{..}{<}%
\delta $ where $\delta $ is any given $\beta -$positive number. A deleted $%
\delta $ $N-$neighborhood of $z_{0}$ is omitted, that is, $\overset{..}{0}%
\overset{..}{<}\overset{..}{\parallel }z\ominus z_{0}\overset{..}{\parallel }%
\overset{..}{<}\delta .$
\end{definition}

\begin{definition}
Let $S\subset 
\mathbb{C}
\left( N\right) .$ A $\ast -$point $z_{0}\in 
\mathbb{C}
\left( N\right) $ is said to be an $N-$interior point of a set $S$ if there
exists a $\delta $ $N-$neighborhood of $z_{0}$ all of whose $\ast -$points
belong to $S.$ If every $\ast -$point $z$ of a set $S$ is an $N-$interior
point, then $S$ is said to be an $N-$open set.
\end{definition}

\begin{definition}
Let $A=\left\{ A_{i}:i\in I\right\} $ be family of $N-$open sets at $%
\mathbb{C}
\left( N\right) .$ If $E\subset 
\mathbb{C}
\left( N\right) $ and $E\subset \underset{i\in I}{\cup }A_{i},$ then the
family $A$ is called an $N-$open cover of the set $E.$ If $I_{0}\subset I$
is a finite subset and $E\subset \underset{i\in I_{0}}{\cup }A_{i},$ then
the family $A_{0}=\left\{ A_{i}:i\in I_{0}\right\} $ is called a finite $N-$%
subcover of the set $E.$
\end{definition}

\begin{definition}
A set $E\subset 
\mathbb{C}
\left( N\right) $ is said to be $\ast -$compact if every $N-$open cover of $%
A $ has a finite $N-$subcover.
\end{definition}

\begin{definition}
A $\ast -$complex function is a function $f$ whose domain and range are
subsets of the set $%
\mathbb{C}
\left( N\right) $ of $\ast -$complex numbers.
\end{definition}

\begin{definition}
Let $f$ be a $\ast -$continuous function defined in a deleted $N-$%
neighborhood of $z_{0}$ and $L\in 
\mathbb{C}
\left( N\right) .$ The $N-$limit of $f$ as $z$ tends to $z_{0}$ exists and
is equal to $L,$ written as $\underset{z\rightarrow z_{0}}{\lim }f\left(
z\right) =L,$ if for every $\varepsilon \overset{..}{>}\overset{..}{0},$
there exists a $\delta \overset{..}{>}\overset{..}{0},$ such that $\overset{%
..}{\parallel }f\left( z\right) \ominus L\overset{..}{\parallel }\overset{..}%
{<}\varepsilon $ whenever $\overset{..}{0}\overset{..}{<}\overset{..}{%
\parallel }z\ominus z_{0}\overset{..}{\parallel }\overset{..}{<}\delta .$
\end{definition}

\begin{definition}
A $\ast -$complex function $f$ is continuous at a point $z_{0}$ if $\underset%
{z\rightarrow z_{0}}{\lim }f\left( z\right) =f\left( z_{0}\right) .$
\end{definition}

In the later sections, we need the concept of a vector space constructed on $%
\ast -$complex field $%
\mathbb{C}
\left( N\right) $ to define the concepts of an algebra, a Banach algebra and
a $C^{\ast }-$algebra in the sense of $\ast -$calculus.

Kadak and Efe \cite{10} gave the definition of a vector space over the
non-Newtonian field as follows:

Let $\mathbb{F}^{\ast }$ denotes either the non-Newtonian real field $%
\mathbb{R}
\left( N\right) $ or the non-Newtonian complex field $%
\mathbb{C}
^{\ast }.$ A non-Newtonian vector space ($N-$vector space) over the field $%
\mathbb{F}^{\ast }$ is a set $V$ on which two operations are defined, called 
$\ast -$addition and scalar $\ast -$multiplication, denoted by $+$ and $%
\times $ by%
\begin{eqnarray*}
+ &:&V\times V\rightarrow V \\
\left( u,v\right) &\rightarrow &u+v=\left( \overset{.}{u}_{1}\overset{.}{+}%
\overset{.}{v}_{1},\overset{..}{u}_{2}\overset{..}{+}\overset{..}{v}%
_{2}\right) , \\
\times &:&\mathbb{F}^{\ast }\times V\rightarrow V \\
\left( \lambda ,u\right) &\rightarrow &\lambda \times u=\left( \overset{.}{%
\lambda }\overset{.}{\times }\overset{.}{u}_{1},\overset{..}{\lambda }%
\overset{..}{\times }\overset{..}{u}_{2}\right) ,
\end{eqnarray*}%
where the $\ast -$vectors are $u=\left( \overset{.}{u}_{1},\overset{..}{u}%
_{2}\right) ,$ $v=\left( \overset{.}{v}_{1},\overset{..}{v}_{2}\right) \in X$
and the scalar $\lambda =\left( \overset{.}{\lambda },\overset{..}{\lambda }%
\right) \in \mathbb{F}^{\ast }.$ Then the operations must satisfy the
following conditions.

(i) \textit{Closure}. For all $\lambda \in \mathbb{F}^{\ast }$ and all $%
u,v\in V,$ the $\ast -$sum $u+v$ and the scalar $\ast -$product $\lambda
\times v$ are uniquely defined and belong to $V.$

(ii) \textit{Associativity}. For all $\xi ,\eta \in \mathbb{F}^{\ast }$ and
all $u,v,w\in V$ then $u+\left( v+w\right) =\left( u+v\right) +w$ and $\xi
\times \left( \eta \times v\right) =\left( \xi \times \eta \right) \times v.$

(iii) \textit{Additive Commutativity}. For all $u,v\in V$ then $u+v=v+u.$

(iv) \textit{Additive Identity}. The set $V$ contains an additive $\ast -$%
identity element, denoted by $\theta _{A}=\left( \overset{.}{0},\overset{..}{%
0}\right) ,$ such that for all $u\in V,$ $u+\theta _{A}=u.$

(v) \textit{Additive Inverse}. The set $V$ contains an additive $\ast -$%
inverse element, denoted by $u_{+}^{-1}=\left( \overset{.}{-}\overset{.}{u}%
_{1},\overset{..}{-}\overset{..}{u}_{2}\right) \in V,$ such that for all $%
u\in V,$ $u+u_{+}^{-1}=u_{+}^{-1}+u=\theta _{A}.$

(vi) \textit{Multiplicative Identity}. The set $V$ contains a multiplicative 
$\ast -$identity element, denoted by $\theta _{M}=\left( \overset{.}{1},%
\overset{..}{0}\right) ,$ such that for all $u\in V,$ $u\times \theta
_{M}=u. $

(vii) \textit{Multiplicative Inverse}. The set $V$ contains a multiplicative 
$\ast -$inverse element, denoted by $u_{\times }^{-1}=\left( \overset{.}{m},%
\overset{..}{n}\right) \in V,$ where $\overset{.}{m}=\alpha \left\{
u_{1}/\left( u_{1}^{2}+u_{2}^{2}\right) \right\} $ and $\overset{..}{n}%
=\beta \left\{ -u_{2}/\left( u_{1}^{2}+u_{2}^{2}\right) \right\} $ such that
for all $u\in V,$ $u\times u_{\times }^{-1}=u_{\times }^{-1}\times u=\theta
_{M}.$

(viii) \textit{Distributive Laws}. For all $\xi ,\eta \in \mathbb{F}^{\ast }$
and all $u,v\in V$ then $\xi \times \left( u+v\right) =\left( \xi \times
u\right) +\left( \xi \times v\right) $ and $\left( \xi +\eta \right) \times
u=\left( \xi \times u\right) +\left( \eta \times u\right) .$

In this definition, it has been noticed that the $\ast -$points $\left( 
\overset{.}{a},\overset{..}{b}\right) $ of the $\ast -$complex field $%
\mathbb{C}
\left( N\right) $ with $a=b$ are considered for the definition of a vector
space over $%
\mathbb{C}
\left( N\right) $, that is, the concept of a vector space is constructed on
the subset of $\ast -$complex field $%
\mathbb{C}
\left( N\right) $ consisting of points $\left( \overset{.}{a},\overset{..}{b}%
\right) $ where $a=b$. So, we need to redefine the concept as follows:

\begin{definition}
A non-Newtonian vector space over the field $%
\mathbb{C}
\left( N\right) $ $\left( 
\mathbb{C}
\left( N\right) -\text{vector space}\right) $ is a set $\mathbb{A}$ together
with the following operations%
\begin{eqnarray*}
\widehat{+} &:&\mathbb{A\times A\rightarrow A}\text{, }\left( x,y\right)
\rightarrow x\widehat{+}y \\
\widehat{\cdot } &:&%
\mathbb{C}
\left( N\right) \times \mathbb{A\rightarrow A}\text{, }\left( \lambda
,x\right) \rightarrow \lambda \widehat{\cdot }x
\end{eqnarray*}%
called $N-$addition and $N-$scalar multiplicaiton, respectively, such that
\end{definition}

\qquad \textit{(i) }$x\widehat{+}y=y\widehat{+}x$\textit{\ for all }$x,y\in
A $\textit{,}

\textit{\qquad (ii) }$\left( x\widehat{+}y\right) \widehat{+}z=x\widehat{+}%
\left( y\widehat{+}z\right) $\textit{\ for all }$x,y,z\in A$\textit{,}

\textit{\qquad (iii) there exists a vector }$0_{\mathbb{A}}$\textit{\ in }$A$%
\textit{\ such that }$x\widehat{+}0_{\mathbb{A}}=x$\textit{\ for all }$x\in
A $\textit{,}

\textit{\qquad (iv) for each }$x\in A$\textit{, there exists a vector }$%
\widehat{-}x$\textit{\ in }$A$\textit{\ such that }$x\widehat{+}\left( 
\widehat{-}x\right) =0_{\mathbb{A}}$\textit{,}

\textit{\qquad (v) }$\lambda \widehat{\cdot }\left( x\widehat{+}y\right)
=\lambda \widehat{\cdot }x\widehat{+}\lambda \widehat{\cdot }y$\textit{\ for
all }$x,y\in A$\textit{\ and }$\lambda \in 
\mathbb{C}
\left( N\right) $\textit{,}

\textit{\qquad (vi) }$\left( \lambda \oplus \mu \right) \widehat{\cdot }%
x=\lambda \widehat{\cdot }x\widehat{+}\mu \widehat{\cdot }y$\textit{\ for
all }$x\in A$\textit{\ and }$\lambda ,\mu \in 
\mathbb{C}
\left( N\right) $\textit{,}

\textit{\qquad (vii) }$\left( \lambda \odot \mu \right) \widehat{\cdot }%
x=\lambda \widehat{\cdot }\left( \mu \widehat{\cdot }x\right) $\textit{\ for
all }$x\in A$\textit{\ and }$\lambda ,\mu \in 
\mathbb{C}
\left( N\right) $\textit{,}

\textit{\qquad (viii) }$1_{%
\mathbb{C}
\left( N\right) }\widehat{\cdot }x=x$\textit{\ for all }$x\in A$\textit{.}

Similarly, it can be simply defined the concept of a normed space over $\ast
-$complex field $%
\mathbb{C}
\left( N\right) $ using $%
\mathbb{R}
_{\beta }-$valued norm $\overset{..}{\parallel }.\overset{..}{\parallel }$
instead of real valued norm $\left\Vert .\right\Vert $ in its classical
definition, as follows:

\begin{definition}
Let $\mathbb{A}$ be a $%
\mathbb{C}
\left( N\right) -$vector space. A norm on $\mathbb{A}$ is a function $%
\left\Vert .\right\Vert _{\mathbb{A}}:\mathbb{A\rightarrow 
\mathbb{R}
}_{\beta }$ such that the following axioms are satisfied for all $x,y\in 
\mathbb{A}$ and $\lambda \in 
\mathbb{C}
\left( N\right) :$
\end{definition}

\textit{(i) }$\left\Vert x\right\Vert _{\mathbb{A}}=\overset{..}{0}$\textit{%
\ implies }$x=0_{\mathbb{A}}.$

\textit{(ii) }$\left\Vert \lambda \widehat{\cdot }x\right\Vert _{\mathbb{A}}=%
\overset{..}{\parallel }\lambda \overset{..}{\parallel }\overset{..}{\times }%
\left\Vert x\right\Vert _{\mathbb{A}}.$

\textit{(iii) }$\left\Vert x\widehat{+}y\right\Vert _{\mathbb{A}}\overset{..}%
{\leq }\left\Vert x\right\Vert _{\mathbb{A}}\overset{..}{+}\left\Vert
y\right\Vert _{\mathbb{A}}.$

\textit{Also, }$A$\textit{\ is called a }$%
\mathbb{C}
\left( N\right) -$\textit{normed space.}

\begin{remark}
Each $%
\mathbb{C}
\left( N\right) -$normed space $\left( \mathbb{A}\text{,}\left\Vert
.\right\Vert _{\mathbb{A}}\right) $ is a non-Newtonian metric space $\left( 
\mathbb{A}\text{,}d\right) $ with a non-Newtonian metric given by $d\left(
x,y\right) =\left\Vert x\widehat{-}y\right\Vert _{\mathbb{A}}.$ All concepts
from non-Newtonian metric spaces are applicable to $%
\mathbb{C}
\left( N\right) -$normed spaces.
\end{remark}

We will use the following new concepts and properties in next section.

\begin{definition}
Let $\left( \mathbb{A}\text{,}\left\Vert .\right\Vert _{\mathbb{A}}\right) $ 
\textit{be a }$%
\mathbb{C}
\left( N\right) -$normed space\textit{, }$\left( x_{n}\right) $ be any
sequence in $\mathbb{A}$ and $x\in \mathbb{A}.$ If for every $\varepsilon 
\overset{..}{>}\overset{..}{0}$ there exists $n_{0}\in 
\mathbb{N}
$ depending on $\varepsilon $ such that $\left\Vert x_{n}\widehat{-}%
x\right\Vert _{\mathbb{A}}\overset{..}{<}\varepsilon $ for all $n\geq n_{0}$
then we say that $\left( x_{n}\right) $ converges to $x$ with respect to the
norm $\left\Vert .\right\Vert _{\mathbb{A}}.$
\end{definition}

\qquad \textit{If for every }$\varepsilon \overset{..}{>}\overset{..}{0}$%
\textit{\ there exists }$n_{0}\in 
\mathbb{N}
$\textit{\ depending on }$\varepsilon $\textit{\ such that\ }$\left\Vert
x_{n}\widehat{-}x_{m}\right\Vert _{\mathbb{A}}\overset{..}{<}\varepsilon $%
\textit{\ for all }$n,m\geq n_{0}$ \textit{then we say that }$\left(
x_{n}\right) $\textit{\ is a Cauchy sequence with respect to the norm }$%
\left\Vert .\right\Vert _{\mathbb{A}}.$\textit{\ }

\begin{definition}
Let $\mathbb{A}$ be a $%
\mathbb{C}
\left( N\right) -$normed space. If every Cauchy sequence $\left(
x_{n}\right) \subset \mathbb{A}$ converges to the limit in $\mathbb{A}$ 
\textit{with respect to the norm }$\left\Vert .\right\Vert _{\mathbb{A}}$ we
say that $\mathbb{A}$ is complete \textit{with respect to the norm }$%
\left\Vert .\right\Vert _{\mathbb{A}}$. A complete $%
\mathbb{C}
\left( N\right) -$normed space is called $%
\mathbb{C}
\left( N\right) -$Banach space.
\end{definition}

\begin{definition}
Let $\mathbb{A}$ be a $%
\mathbb{C}
\left( N\right) -$normed space and $\left( x_{n}\right) $ be any sequence in 
$\mathbb{A}$. An infinite sum%
\begin{equation*}
x_{1}\widehat{+}x_{2}\widehat{+}...\widehat{+}x_{n}\widehat{+}...=\underset{%
n=1}{\overset{\infty }{\widehat{\sum }}}x_{n}
\end{equation*}%
is called the series in $\mathbb{A}$. A sequence $\left( S_{m}\right) $ with
the general term $S_{m}=\underset{n=1}{\overset{m}{\widehat{\sum }}}x_{n}$
is called partial sums sequence of the series $\underset{n=1}{\overset{%
\infty }{\widehat{\sum }}}x_{n}.$ If the sequence $\left( S_{m}\right) $
converges to some $S\in \mathbb{A}$ with respect to the norm $\left\Vert
.\right\Vert _{\mathbb{A}}$, then it is said that the series $\underset{n=1}{%
\overset{\infty }{\widehat{\sum }}}x_{n}$ converges to $S\in \mathbb{A}$
with respect to the norm $\left\Vert .\right\Vert _{\mathbb{A}}$.
\end{definition}

\begin{definition}
Let $\mathbb{A}$ be a $%
\mathbb{C}
\left( N\right) -$normed space and $\underset{n=1}{\overset{\infty }{%
\widehat{\sum }}}x_{n}$ be any series in $\mathbb{A}$. If the $\beta -$%
series $_{\beta }\underset{n=1}{\overset{\infty }{\sum }}\left\Vert
x_{n}\right\Vert _{\mathbb{A}}$ is $\beta -$convergent, we say that $%
\underset{n=1}{\overset{\infty }{\widehat{\sum }}}x_{n}$ is absolutely
convergent in $\mathbb{A}$ with respect to the norm $\left\Vert .\right\Vert
_{\mathbb{A}}$.
\end{definition}

\begin{theorem}
A $%
\mathbb{C}
\left( N\right) -$normed space $\mathbb{A}$ is a $%
\mathbb{C}
\left( N\right) -$Banach space if and only if every absolutely convergent
series in $\mathbb{A}$ is convergent in $\mathbb{A}$ with respect to the
norm $\left\Vert .\right\Vert _{\mathbb{A}}$.
\end{theorem}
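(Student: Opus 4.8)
The plan is to mirror the classical Banach-space argument, transporting each step into the $\beta$-arithmetic. The whole proof rests on one organizing observation: since $\beta$ (hence $\beta^{-1}$) is an order isomorphism between the classical arithmetic and the $\beta$-arithmetic, a $\beta$-series ${}_{\beta}\sum_{n=1}^{\infty} a_{n}$ is $\beta$-convergent if and only if the ordinary series $\sum_{n=1}^{\infty}\beta^{-1}(a_{n})$ converges, and the $\beta$-ordering $\overset{..}{\leq}$ corresponds under $\beta^{-1}$ to the usual ordering. This lets me borrow the classical comparison test, the Cauchy criterion for $\beta$-series, and convergence of geometric $\beta$-series without re-deriving them.

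For the forward implication I would assume $\mathbb{A}$ is a $\mathbb{C}(N)$-Banach space and take a series $\widehat{\sum}_{n=1}^{\infty} x_{n}$ that is absolutely convergent, i.e. ${}_{\beta}\sum_{n=1}^{\infty}\left\Vert x_{n}\right\Vert _{\mathbb{A}}$ is $\beta$-convergent. Writing $S_{m}=\widehat{\sum}_{n=1}^{m} x_{n}$, I would first extend the norm triangle inequality (axiom (iii) of the $\mathbb{C}(N)$-norm) by induction to finite $N$-sums, obtaining, for $m>k$, the estimate $\left\Vert S_{m}\widehat{-}S_{k}\right\Vert _{\mathbb{A}}\overset{..}{\leq}{}_{\beta}\sum_{n=k+1}^{m}\left\Vert x_{n}\right\Vert _{\mathbb{A}}$. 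Because the $\beta$-partial sums of ${}_{\beta}\sum\left\Vert x_{n}\right\Vert _{\mathbb{A}}$ $\beta$-converge, they are $\beta$-Cauchy, so the right-hand tail is eventually $\overset{..}{<}\varepsilon$ for any $\beta$-positive $\varepsilon$. Hence $(S_{m})$ is Cauchy in the sense defined immediately above, and completeness of $\mathbb{A}$ yields a limit in $\mathbb{A}$; that is, $\widehat{\sum} x_{n}$ converges.

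For the converse, assuming every absolutely convergent series converges, I would take an arbitrary Cauchy sequence $(x_{n})\subset\mathbb{A}$ and, using the Cauchy property, choose indices $n_{1}<n_{2}<\cdots$ with $\left\Vert x_{n_{k+1}}\widehat{-}x_{n_{k}}\right\Vert _{\mathbb{A}}\overset{..}{<}\beta(2^{-k})$. Setting $y_{1}=x_{n_{1}}$ and $y_{k+1}=x_{n_{k+1}}\widehat{-}x_{n_{k}}$, the telescoping identity makes the partial sums of $\widehat{\sum} y_{k}$ equal to $x_{n_{k}}$. Comparing ${}_{\beta}\sum\left\Vert y_{k}\right\Vert _{\mathbb{A}}$ term by term with the geometric $\beta$-series ${}_{\beta}\sum\beta(2^{-k})$, which is $\beta$-convergent because $\sum 2^{-k}$ converges classically, shows $\widehat{\sum} y_{k}$ is absolutely convergent; by hypothesis it converges, so the subsequence $(x_{n_{k}})$ converges. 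Finally, a standard halving argument carried out in the $\beta$-arithmetic (using $\varepsilon\ddot{/}\ddot{2}$ and $\overset{..}{+}$) shows that a Cauchy sequence with a convergent subsequence converges to the same limit, completing the proof.

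The main obstacle I anticipate is not conceptual but bookkeeping: I must ensure that every tail estimate, comparison, and $\varepsilon$-halving is performed with the correct $\beta$-operations ($\overset{..}{+}$, $\ddot{/}$, $\overset{..}{\leq}$) rather than slipping back into ordinary real arithmetic, and that thresholds such as $\beta(2^{-k})$ are genuinely $\beta$-positive. The isomorphism $\beta^{-1}$ keeps these steps honest, but the finite-sum triangle inequality and the subsequence extraction are the two places where the translation must be checked most carefully.
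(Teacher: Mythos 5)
Your proposal is correct and follows essentially the same route as the paper's own proof: in the forward direction you show the partial sums are Cauchy via the finite triangle inequality and the tail of the $\beta$-convergent norm series, and in the converse you extract a subsequence whose increments are dominated by $\beta(2^{-k})$, telescope it into an absolutely convergent series, and finish with the standard argument that a Cauchy sequence with a convergent subsequence converges. The only cosmetic difference is that you justify the $\beta$-convergence of the geometric majorant through the order isomorphism $\beta^{-1}$ with classical arithmetic, whereas the paper simply cites Example 1 of its reference \cite{7}.
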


\begin{proof}
Let $\mathbb{A}$ be a $%
\mathbb{C}
\left( N\right) -$Banach space and $\underset{n=1}{\overset{\infty }{%
\widehat{\sum }}}x_{n}$ be absolutely convergent in $\mathbb{A}$ with
respect to the norm $\left\Vert .\right\Vert _{\mathbb{A}}$. Given $%
\varepsilon \overset{..}{>}\overset{..}{0},$ choose $n_{0}\in 
\mathbb{N}
$ such that $_{\beta }\underset{n=n_{0}}{\overset{\infty }{\sum }}\left\Vert
x_{n}\right\Vert _{\mathbb{A}}\overset{..}{<}\varepsilon .$ This implies that%
\begin{equation*}
\left\Vert S_{n}\widehat{-}S_{m}\right\Vert _{\mathbb{A}}=\left\Vert 
\underset{k=n+1}{\overset{m}{\widehat{\sum }}}x_{k}\right\Vert _{\mathbb{A}}%
\overset{..}{\leq }_{\beta }\underset{k=n+1}{\overset{m}{\sum }}\left\Vert
x_{k}\right\Vert _{\mathbb{A}}\overset{..}{<}\varepsilon
\end{equation*}%
for all $n,m\geq n_{0}$ where $n<m.$ Thus, $\left( S_{n}\right) $ is a
Cauchy sequence with respect to the norm $\left\Vert .\right\Vert _{\mathbb{A%
}}$ and so it converges to some $S\in \mathbb{A}$ with respect to the norm $%
\left\Vert .\right\Vert _{\mathbb{A}}.$ Then, the series $\underset{n=1}{%
\overset{\infty }{\widehat{\sum }}}x_{n}$ converges to $S\in \mathbb{A}$
with respect to the norm $\left\Vert .\right\Vert _{\mathbb{A}}.$

Conversely, suppose that every absolutely convergent series in $\mathbb{A}$
is convergent in $\mathbb{A}$ with respect to the norm $\left\Vert
.\right\Vert _{\mathbb{A}}.$ Let $\left( x_{n}\right) $ be any Cauchy
sequence with respect to the norm $\left\Vert .\right\Vert _{\mathbb{A}}.$
Then, we can choose $n_{k}\in 
\mathbb{N}
$ such that%
\begin{equation*}
n_{k}<n_{k+1},\text{ \ }\left\Vert x_{n_{k}}\widehat{-}x_{n_{k+1}}\right%
\Vert _{\mathbb{A}}\overset{..}{<}\frac{\overset{..}{1}}{\overset{..}{2}%
^{\left( k\right) _{\beta }}}\beta
\end{equation*}%
for any $k\in 
\mathbb{N}
.$ This implies that the series $x_{n_{1}}\widehat{+}\underset{k=1}{\overset{%
\infty }{\widehat{\sum }}}\left( x_{n_{k+1}}\widehat{-}x_{n_{k}}\right) $ is
absolutely convergent with respect to the norm $\left\Vert .\right\Vert _{%
\mathbb{A}}$ since $_{\beta }\underset{k=1}{\overset{\infty }{\sum }}\frac{%
\overset{..}{1}}{\overset{..}{2}^{\left( k\right) _{\beta }}}\beta $ is $%
\beta -$convergent by Example 1 in \cite{7}, and so it converges to some $%
x\in \mathbb{A}$ with respect to the norm $\left\Vert .\right\Vert _{\mathbb{%
A}}.$ Therefore, we write $x_{n_{k}}=x_{n_{1}}\widehat{+}\underset{j=1}{%
\overset{k-1}{\widehat{\sum }}}\left( x_{n_{j+1}}\widehat{-}x_{n_{j}}\right) 
$ and $\left( x_{n_{k}}\right) $ converges to $x\in \mathbb{A}$ with respect
to the norm $\left\Vert .\right\Vert _{\mathbb{A}}$ as $k\rightarrow \infty
. $ On the other hand, since $\left( x_{n}\right) $ is a Cauchy sequence
with respect to the norm $\left\Vert .\right\Vert _{\mathbb{A}}$ and $\left(
x_{n_{k}}\right) $ converges to $x\in \mathbb{A}$ with respect to the norm $%
\left\Vert .\right\Vert _{\mathbb{A}}$ as $k\rightarrow \infty ,$ for every $%
\varepsilon \overset{..}{>}\overset{..}{0}$\ there exists $N,M\in 
\mathbb{N}
$\ depending on $\varepsilon $\ such that%
\begin{equation*}
\left\Vert x_{n_{k}}\widehat{-}x\right\Vert _{\mathbb{A}}\overset{..}{<}%
\frac{\varepsilon }{\overset{..}{2}}\beta \ \text{\ for all }k\geq N
\end{equation*}%
and%
\begin{equation*}
\left\Vert x_{n}\widehat{-}x_{m}\right\Vert _{\mathbb{A}}\overset{..}{<}%
\frac{\varepsilon }{\overset{..}{2}}\beta \ \text{\ for all }n,m\geq M.
\end{equation*}%
Taking $n_{k}$ such that $k\geq N$ and $n_{k}\geq M,$ then%
\begin{equation*}
\left\Vert x_{n}\widehat{-}x\right\Vert _{\mathbb{A}}\overset{..}{\leq }%
\left\Vert x_{n}\widehat{-}x_{n_{k}}\right\Vert _{\mathbb{A}}\overset{..}{+}%
\left\Vert x_{n_{k}}\widehat{-}x\right\Vert _{\mathbb{A}}\overset{..}{<}%
\varepsilon
\end{equation*}%
for all $n\geq M.$ So, $\left( x_{n}\right) $ is convergent in $A$ with
respect to the norm $\left\Vert .\right\Vert _{\mathbb{A}}.$ The proof is
completed.
\end{proof}

\subsection{Non-Newtonian Banach Algebras}

This section is fundamentals of Non-Newtonian Banach Algebras.

We begin this section by introducing the concept of a Banach $%
\mathbb{C}
\left( N\right) -$algebra.

\begin{definition}
A non-Newtonian complex algebra ($%
\mathbb{C}
\left( N\right) -$algebra) is a vector space $\mathbb{A}$ over $%
\mathbb{C}
\left( N\right) $ in which an associative and distributive $N-$%
multiplication is defined, that is%
\begin{eqnarray*}
x\widehat{\times }\left( y\widehat{\times }z\right) &=&\left( x\widehat{%
\times }y\right) \widehat{\times }z, \\
\left( x\widehat{+}y\right) \widehat{\times }z &=&x\widehat{\times }z%
\widehat{+}y\widehat{\times }z,\text{ }x\widehat{\times }\left( y\widehat{+}%
z\right) =x\widehat{\times }y\widehat{+}x\widehat{\times }z
\end{eqnarray*}%
for all $x,y,z\in \mathbb{A}$ and which is related to scalar $N-$%
multiplication so that%
\begin{equation*}
\lambda \widehat{\cdot }\left( x\widehat{\times }y\right) =x\widehat{\times }%
\left( \lambda \widehat{\cdot }y\right) =\left( \lambda \widehat{\cdot }%
x\right) \widehat{\times }y
\end{equation*}%
\textit{for all }$x,y\in \mathbb{A}$\textit{\ and for all }$\lambda \in 
\mathbb{C}
\left( N\right) .$ An $%
\mathbb{C}
\left( N\right) -$algebra is unital if it has a unit element $1_{\mathbb{A}%
}^{N}$ with $x\widehat{\times }1_{\mathbb{A}}^{N}=1_{\mathbb{A}}^{N}\widehat{%
\times }x=x$\textit{\ for all }$x\in \mathbb{A}$.
\end{definition}

\textit{\qquad If there is a norm defined in }$\mathbb{A}$\textit{\ which
makes }$\mathbb{A}$\textit{\ into a normed space and which satisfies the }$%
N- $\textit{multiplicative inequality}%
\begin{equation*}
\overset{..}{\parallel }x\widehat{\times }y\overset{..}{\parallel }_{\mathbb{%
A}}\overset{..}{\leq }\overset{..}{\parallel }x\overset{..}{\parallel }_{%
\mathbb{A}}\overset{..}{\times }\overset{..}{\parallel }y\overset{..}{%
\parallel }_{\mathbb{A}}
\end{equation*}%
\textit{for all }$x,y\in \mathbb{A}$\textit{, then }$\mathbb{A}$\textit{\ is
called a normed }$%
\mathbb{C}
\left( N\right) -$\textit{algebra. If, in addition, }$\mathbb{A}$\textit{\
is a Banach space with the norm }$\overset{..}{\parallel }.\overset{..}{%
\parallel }_{\mathbb{A}},$\textit{\ then we call }$\mathbb{A}$\textit{\ a
Banach }$%
\mathbb{C}
\left( N\right) -$\textit{algebra (non-Newtonian Banach algebra). }

\begin{remark}
\textit{The zero element in }$\mathbb{A}$\textit{\ is the element }$0_{%
\mathbb{A}}^{N}$\textit{\ such that }$x\widehat{+}0_{\mathbb{A}}^{N}=0_{%
\mathbb{A}}^{N}\widehat{+}x=x$\textit{\ for all }$x\in \mathbb{A}$\textit{.}
\end{remark}

The followings are examples of Banach $%
\mathbb{C}
\left( N\right) -$algebras.

\begin{example}
The Banach space $%
\mathbb{C}
\left( N\right) $ forms a Banach $%
\mathbb{C}
\left( N\right) -$algebra with the algebraic operations $\oplus $ and $\odot 
$ and the norm $\overset{..}{\parallel }.\overset{..}{\parallel }$ defined
on it.
\end{example}

\begin{example}
Let $\Omega \subset 
\mathbb{C}
\left( N\right) $ be compact and $C_{N}\left( \Omega \right) =\left\{
f:\Omega \rightarrow 
\mathbb{C}
\left( N\right) :f\text{\ is }\ast -\text{continouous}\right\} .$ Then, the
set $C_{N}\left( \Omega \right) $ forms a Banach space with respect to the
algebraic operations addition $\left( +\right) $ and scalar multiplication $%
\left( \cdot \right) $ and the norm on $C_{N}\left( \Omega \right) $ defined
as%
\begin{eqnarray*}
+ &:&C_{N}\left( \Omega \right) \times C_{N}\left( \Omega \right)
\rightarrow C_{N}\left( \Omega \right) ,\left( f,g\right) \rightarrow f+g \\
f+g &:&\Omega \rightarrow 
\mathbb{C}
\left( N\right) ,z\rightarrow \left( f+g\right) \left( z\right) =f\left(
z\right) \overset{..}{+}g\left( z\right) \\
\cdot &:&%
\mathbb{C}
\left( N\right) \times C_{N}\left( \Omega \right) \rightarrow C_{N}\left(
\Omega \right) ,\left( \lambda ,f\right) \rightarrow \lambda \cdot f \\
\lambda \cdot f &:&\Omega \rightarrow 
\mathbb{C}
\left( N\right) ,z\rightarrow \left( \lambda \cdot f\right) \left( z\right)
=\lambda \overset{..}{\times }f\left( z\right) \\
\left\Vert .\right\Vert _{C_{N}\left( \Omega \right) } &:&C_{N}\left( \Omega
\right) \rightarrow 
\mathbb{R}
_{\beta },f\rightarrow \left\Vert f\right\Vert _{C_{N}\left( \Omega \right)
}=\underset{z\in \Omega }{\max }\overset{..}{\parallel }f\left( z\right) 
\overset{..}{\parallel }
\end{eqnarray*}%
for all $f,g\in C_{N}\left( \Omega \right) $ and for all $\lambda \in 
\mathbb{C}
\left( N\right) $ \textit{\cite{4}.}
\end{example}

\qquad \textit{As a new algebraic operation, we define multiplication }$%
\left( \times \right) $\textit{\ as follows:}%
\begin{eqnarray*}
\times &:&C_{N}\left( \Omega \right) \times C_{N}\left( \Omega \right)
\rightarrow C_{N}\left( \Omega \right) ,\left( f,g\right) \rightarrow
f\times g \\
f\times g &:&\Omega \rightarrow 
\mathbb{C}
\left( N\right) ,z\rightarrow \left( f\times g\right) \left( z\right)
=f\left( z\right) \overset{..}{\times }g\left( z\right)
\end{eqnarray*}%
\textit{\qquad \qquad \qquad \qquad for all }$f,g\in C_{N}\left( \Omega
\right) .$\textit{\ It can be easily seen that}%
\begin{eqnarray*}
f\times \left( g\times h\right) &=&\left( f\times g\right) \times h, \\
\left( f+g\right) \times h &=&f\times h+g\times h,\text{ }f\times \left(
g+h\right) =f\times g+f\times h, \\
\lambda \cdot \left( f\times g\right) &=&f\times \left( \lambda \cdot
g\right) =\left( \lambda \cdot f\right) \times g
\end{eqnarray*}%
\textit{for all }$f,g,h\in C_{N}\left( \Omega \right) $\textit{\ and for all 
}$\lambda \in 
\mathbb{C}
\left( N\right) .$\textit{\ Also, we have}%
\begin{eqnarray*}
\left\Vert f\times g\right\Vert _{C_{N}\left( \Omega \right) } &=&\underset{%
z\in \Omega }{\max }\overset{..}{\parallel }\left( f\times g\right) \left(
z\right) \overset{..}{\parallel }=\underset{z\in \Omega }{\max }\overset{..}{%
\parallel }f\left( z\right) \overset{..}{\times }g\left( z\right) \overset{..%
}{\parallel } \\
&=&\underset{z\in \Omega }{\max }\left( \overset{..}{\parallel }f\left(
z\right) \overset{..}{\parallel }\overset{..}{\times }\overset{..}{\parallel 
}g\left( z\right) \overset{..}{\parallel }\right) \\
&=&\underset{z\in \Omega }{\max }\overset{..}{\parallel }f\left( z\right) 
\overset{..}{\parallel }\overset{..}{\times }\underset{z\in \Omega }{\max }%
\overset{..}{\parallel }g\left( z\right) \overset{..}{\parallel } \\
&=&\left\Vert f\right\Vert _{C_{N}\left( \Omega \right) }\overset{..}{\times 
}\left\Vert g\right\Vert _{C_{N}\left( \Omega \right) }.
\end{eqnarray*}%
So, $C_{N}\left( \Omega \right) $ is \textit{a Banach }$%
\mathbb{C}
\left( N\right) -$\textit{algebra.}

\qquad Note that the zero and unit elements in $C_{N}\left( \Omega \right) $
are $\ast -$continuous functions $0_{%
\mathbb{C}
\left( N\right) }$ and $1_{%
\mathbb{C}
\left( N\right) },$ respectively, where $0_{%
\mathbb{C}
\left( N\right) }\left( z\right) =0_{%
\mathbb{C}
\left( N\right) }$ and $1_{%
\mathbb{C}
\left( N\right) }\left( z\right) =1_{%
\mathbb{C}
\left( N\right) }$ for all $z\in 
\mathbb{C}
\left( N\right) .$

Now, we define the concept of an $N-$subalgebra, and also, we investigate
some $N-$subalgebras of $C_{N}\left( \Omega \right) .$

\begin{definition}
Let $\mathbb{A}$ be a vector space over $%
\mathbb{C}
\left( N\right) $ and $\mathbb{B}$ be a subset of $\mathbb{A}$. If the
following axioms hold, then $\mathbb{B}$ is called an $N-$subspace of $%
\mathbb{A}$.

(i) The zero element $0_{\mathbb{A}}^{N}$ \ is in $\mathbb{B}$.

(ii) If $x$ and $y$ are elements of $\mathbb{B}$, then the sum $x\widehat{+}%
y $ is an element of $\mathbb{B}$.

(iii) If $x$ is an element of $\mathbb{B}$ and $\lambda $ is a scalar from $%
\mathbb{C}
\left( N\right) ,$ then the scalar product $\lambda \widehat{\cdot }x$ is an
element of $\mathbb{B}$.
\end{definition}

\begin{definition}
Let $\mathbb{A}$ be a $%
\mathbb{C}
\left( N\right) -$\textit{algebra and }$\mathbb{B}$ be an $N-$subspace of $%
\mathbb{A}$. If $x\widehat{\times }y\in \mathbb{B}$ for all $x,y\in \mathbb{B%
}$, then $\mathbb{B}$ is called an $N-$subalgebra of $\mathbb{A}$.
\end{definition}

\begin{example}
Consider the \textit{Banach }$%
\mathbb{C}
\left( N\right) -$\textit{algebra }$C_{N}\left( \Omega \right) $ given in
Example 2 for $\Omega =\left\{ z\in 
\mathbb{C}
\left( N\right) :\overset{..}{\parallel }z\overset{..}{\parallel }\overset{..%
}{\leq }\frac{\overset{..}{1}}{\overset{..}{2}}\beta \right\} .$ Define the
set%
\begin{equation*}
I_{z}=\left\{ f\in C_{N}\left( \Omega \right) :f\left( z\right) =0_{%
\mathbb{C}
\left( N\right) }\right\}
\end{equation*}%
for $z\in 
\mathbb{C}
\left( N\right) .$ Since $0_{C_{N}\left( \Omega \right) }^{N}=0_{%
\mathbb{C}
\left( N\right) }\in I_{z}$ and $f+g,$ $\lambda \cdot f,$ $f\times g\in
I_{z} $ for all $f,g\in I_{z}$ and for all $\lambda \in 
\mathbb{C}
\left( N\right) ,$ the set $I_{z}$ is an $N-$subalgebra of $C_{N}\left(
\Omega \right) .$
\end{example}

\begin{definition}
An $N-$polynomial over $%
\mathbb{C}
\left( N\right) $ in indeterminate $z$ is an expression of the form $f\left(
z\right) =z_{0}\oplus \left( z_{1}\odot z\right) \oplus \left( z_{2}\odot
z^{2}\right) \oplus ...\oplus \left( z_{n}\odot z^{n}\right) $ with $n\in 
\mathbb{N}
$ and $z_{0},z_{1},...,z_{n}\in 
\mathbb{C}
\left( N\right) .$ The set of such $N-$polynomials is denoted by $%
\mathbb{C}
\left( N\right) \left[ z\right] .$
\end{definition}

\begin{example}
Consider the \textit{Banach }$%
\mathbb{C}
\left( N\right) -$\textit{algebra }$C_{N}\left( \Omega \right) $ given in
Example 2 for $\Omega =\left\{ z\in 
\mathbb{C}
\left( N\right) :\overset{..}{\parallel }z\overset{..}{\parallel }\overset{..%
}{\leq }\frac{\overset{..}{1}}{\overset{..}{2}}\beta \right\} $. Obviously,
since $f$ is a $\ast -$continuous function for all $f\in 
\mathbb{C}
\left( N\right) \left[ z\right] ,$ $0_{C_{N}\left( \Omega \right) }^{N}=0_{%
\mathbb{C}
\left( N\right) }\in 
\mathbb{C}
\left( N\right) \left[ z\right] $ where the $0_{%
\mathbb{C}
\left( N\right) }$ function in $%
\mathbb{C}
\left( N\right) \left[ z\right] $ is obtained in the special case $%
z_{0}=z_{1}=...=z_{n}=0_{%
\mathbb{C}
\left( N\right) }$ and $f+g,$ $\lambda \cdot f,$ $f\times g\in 
\mathbb{C}
\left( N\right) \left[ z\right] $ for all $f,g\in 
\mathbb{C}
\left( N\right) \left[ z\right] $ and for all $\lambda \in 
\mathbb{C}
\left( N\right) ,$ the set $%
\mathbb{C}
\left( N\right) \left[ z\right] $ is an $N-$subalgebra of $C_{N}\left(
\Omega \right) .$
\end{example}

We continue this section with the concept of an $N-$invertible element and
some related results.

\begin{definition}
Let $\mathbb{A}$ be a unital Banach $%
\mathbb{C}
\left( N\right) -$algebra with a unit $1_{\mathbb{A}}^{N}.$ An element $x$
of $\mathbb{A}$ is called an $N-$invertible element, if there is an element $%
y$ in $\mathbb{A}$ such that $x\widehat{\times }y=y\widehat{\times }x=1_{%
\mathbb{A}}^{N}.$ The element $y$ is called an $N-$inverse of $x$ and
denoted by $x^{-1_{N}}.$ The $N-$inverse of an $N-$inbertible element is
unique. The set of $N-$invertible elements in $\mathbb{A}$ is denoted by $%
\mathbb{A}^{-1_{N}}.$ $\mathbb{A}^{-1_{N}}$ is a group with respect to the
multiplication $\widehat{\times }.$
\end{definition}

\begin{example}
Consider the \textit{Banach }$%
\mathbb{C}
\left( N\right) -$\textit{algebra }$C_{N}\left( \Omega \right) $ given in
Example 2 and the $N-$subalgebra $%
\mathbb{C}
\left( N\right) \left[ z\right] $ given in Example 4. Define the function $%
f:\Omega \rightarrow 
\mathbb{C}
\left( N\right) ,$ $f\left( z\right) =z\oplus 1_{%
\mathbb{C}
\left( N\right) }.$It is clear that $f\in 
\mathbb{C}
\left( N\right) \left[ z\right] .$ If we choose the $\ast -$continuous
function $g:\Omega \rightarrow 
\mathbb{C}
\left( N\right) ,$ $g\left( z\right) =\frac{1_{%
\mathbb{C}
\left( N\right) }}{z\oplus 1_{%
\mathbb{C}
\left( N\right) }},$ we get $f\left( z\right) \odot g\left( z\right)
=g\left( z\right) \odot f\left( z\right) =1_{%
\mathbb{C}
\left( N\right) }.$ This shows that the function $f$ is $N-$invertible in $%
\mathbb{C}
\left( N\right) \left[ z\right] $ and the function $g$ is $N-$inverse of $f.$
\end{example}

\begin{remark}
Let $\mathbb{A}$ be a unital Banach $%
\mathbb{C}
\left( N\right) -$algebra with a unit $1_{\mathbb{A}}^{N}$ and $x,y\in 
\mathbb{A}^{-1_{N}}.$ Then, $\left( x\widehat{\times }y\right)
^{-1_{N}}=y^{-1_{N}}\widehat{\times }x^{-1_{N}}$ and $\left(
x^{-1_{N}}\right) ^{-1_{N}}=x.$
\end{remark}

\begin{proposition}
Let $\mathbb{A}$ be a unital Banach $%
\mathbb{C}
\left( N\right) -$algebra with a unit $1_{\mathbb{A}}^{N}.$ If an element $%
x\in \mathbb{A}$ is in open ball $\overset{..}{\parallel }x\widehat{-}1_{%
\mathbb{A}}^{N}\overset{..}{\parallel }_{\mathbb{A}}\overset{..}{<}\overset{%
..}{1},$ then $x$ is $N-$invertible and the $N-$inverse $x^{-1_{N}}$ is
written as%
\begin{equation*}
x^{-1_{N}}=\underset{n=0}{\overset{\infty }{\sum }}\left( 1_{\mathbb{A}}^{N}%
\widehat{-}x\right) ^{n}.
\end{equation*}
\end{proposition}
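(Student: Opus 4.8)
The plan is to reproduce the classical Neumann-series argument, with every estimate transported into $\beta$-arithmetic. First I would set $y=1_{\mathbb{A}}^{N}\widehat{-}x$, so that the hypothesis reads $\left\Vert y\right\Vert _{\mathbb{A}}\overset{..}{<}\overset{..}{1}$ (the norm is unchanged under the additive $N$-inverse, by the norm axioms applied to the scalar $\ominus 1_{\mathbb{C}\left( N\right) }$). Iterating the $N$-multiplicative inequality $\left\Vert u\widehat{\times }v\right\Vert _{\mathbb{A}}\overset{..}{\leq }\left\Vert u\right\Vert _{\mathbb{A}}\overset{..}{\times }\left\Vert v\right\Vert _{\mathbb{A}}$ then yields
\[
\left\Vert y^{n}\right\Vert _{\mathbb{A}}\overset{..}{\leq }\left\Vert y\right\Vert _{\mathbb{A}}^{\left( n\right) _{\beta }}
\]
for every $n$, the right-hand side being the $n$-fold $\beta$-product of $\left\Vert y\right\Vert _{\mathbb{A}}$ with itself.

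Next I would show that $\underset{n=0}{\overset{\infty }{\widehat{\sum }}}y^{n}$ is absolutely convergent. The $\beta$-series $_{\beta }\underset{n=0}{\overset{\infty }{\sum }}\left\Vert y\right\Vert _{\mathbb{A}}^{\left( n\right) _{\beta }}$ is a non-Newtonian geometric series whose ratio satisfies $\left\Vert y\right\Vert _{\mathbb{A}}\overset{..}{<}\overset{..}{1}$; applying $\beta ^{-1}$ turns it into the classical geometric series $\underset{n=0}{\overset{\infty }{\sum }}\rho ^{n}$ with $\rho =\beta ^{-1}\left( \left\Vert y\right\Vert _{\mathbb{A}}\right) \in \left[ 0,1\right) $, which converges, so the $\beta$-series $\beta$-converges. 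Combining this with the inequality of the previous step through the comparison test in $\beta$-arithmetic, $_{\beta }\underset{n=0}{\overset{\infty }{\sum }}\left\Vert y^{n}\right\Vert _{\mathbb{A}}$ is $\beta$-convergent too, i.e.\ $\underset{n=0}{\overset{\infty }{\widehat{\sum }}}y^{n}$ is absolutely convergent. Because $\mathbb{A}$ is a $\mathbb{C}\left( N\right) $-Banach space, Theorem 1 then provides an element $s\in \mathbb{A}$ to which this series converges with respect to $\left\Vert .\right\Vert _{\mathbb{A}}$.

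It remains to verify that $s$ is a two-sided $N$-inverse of $x$. Writing $s_{m}=\underset{n=0}{\overset{m}{\widehat{\sum }}}y^{n}$ for the partial sums and using $x=1_{\mathbb{A}}^{N}\widehat{-}y$, a telescoping computation with the distributive law gives
\[
x\widehat{\times }s_{m}=s_{m}\widehat{\times }x=1_{\mathbb{A}}^{N}\widehat{-}y^{m+1}.
\]
As $m\rightarrow \infty $, the estimate $\left\Vert y^{m+1}\right\Vert _{\mathbb{A}}\overset{..}{\leq }\left\Vert y\right\Vert _{\mathbb{A}}^{\left( m+1\right) _{\beta }}$ forces $y^{m+1}\rightarrow 0_{\mathbb{A}}^{N}$, so the right-hand side tends to $1_{\mathbb{A}}^{N}$, while the left-hand sides $x\widehat{\times }s_{m}$ and $s_{m}\widehat{\times }x$ tend to $x\widehat{\times }s$ and $s\widehat{\times }x$, respectively. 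Comparing limits gives $x\widehat{\times }s=s\widehat{\times }x=1_{\mathbb{A}}^{N}$, so $x$ is $N$-invertible with $x^{-1_{N}}=s=\underset{n=0}{\overset{\infty }{\widehat{\sum }}}\left( 1_{\mathbb{A}}^{N}\widehat{-}x\right) ^{n}$.

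The step I expect to be the main obstacle is this last passage to the limit, which requires continuity of the $N$-multiplication in each variable --- a property not isolated earlier in the paper. I would supply it on the spot from the $N$-multiplicative inequality, estimating $\left\Vert x\widehat{\times }s_{m}\widehat{-}x\widehat{\times }s\right\Vert _{\mathbb{A}}=\left\Vert x\widehat{\times }\left( s_{m}\widehat{-}s\right) \right\Vert _{\mathbb{A}}\overset{..}{\leq }\left\Vert x\right\Vert _{\mathbb{A}}\overset{..}{\times }\left\Vert s_{m}\widehat{-}s\right\Vert _{\mathbb{A}}$, whose right-hand side $\beta$-tends to $\overset{..}{0}$ since $s_{m}\rightarrow s$ and $\beta$-multiplication by the fixed factor $\left\Vert x\right\Vert _{\mathbb{A}}$ preserves $\beta$-convergence to $\overset{..}{0}$. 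A secondary point needing care is the convergence of the $\beta$-geometric series, which I would either read off from the $\beta$-analogue already used in the proof of Theorem 1 or verify directly via the isomorphism carried by $\beta ^{-1}$.
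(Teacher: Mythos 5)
Your proposal is correct and takes essentially the same route as the paper: both arguments recognize $\underset{n=0}{\overset{\infty }{\sum }}\left( 1_{\mathbb{A}}^{N}\widehat{-}x\right) ^{n}$ as an absolutely convergent series (its norms dominated by a convergent geometric $\beta -$series), invoke Theorem 1 to obtain a limit in $\mathbb{A}$, and then verify that this limit is a two-sided $N-$inverse of $x$. Your write-up is in fact more careful at exactly the two places the paper glosses over: you supply the comparison $\overset{..}{\parallel }y^{n}\overset{..}{\parallel }_{\mathbb{A}}\overset{..}{\leq }\overset{..}{\parallel }y\overset{..}{\parallel }_{\mathbb{A}}^{\left( n\right) _{\beta }}$ (with $y=1_{\mathbb{A}}^{N}\widehat{-}x$) that is needed before Theorem 1 can be applied, and you justify the final limit passage by partial sums, telescoping, and continuity of $\widehat{\times }$ derived from the $N-$multiplicative inequality, whereas the paper distributes $\left( 1_{\mathbb{A}}^{N}\widehat{-}x\right) $ through the infinite sum and cancels the two series term by term without comment.
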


\begin{proof}
Since $\underset{n=0}{\overset{\infty }{\overset{..}{\sum }}}\overset{..}{%
\parallel }1_{\mathbb{A}}^{N}\widehat{-}x\overset{..}{\parallel }_{\mathbb{A}%
}^{n}$ is $\beta -$convergent by Example 4 in \cite{7}, $\underset{n=0}{%
\overset{\infty }{\sum }}\left( 1_{\mathbb{A}}^{N}\widehat{-}x\right) ^{n}$
converges in norm $\overset{..}{\parallel }.\overset{..}{\parallel }_{%
\mathbb{A}}$ by Theorem 1. Let $y=\underset{n=0}{\overset{\infty }{\sum }}%
\left( 1_{\mathbb{A}}^{N}\widehat{-}x\right) ^{n}.$ Then, we get%
\begin{eqnarray*}
y\widehat{\times }x &=&x\widehat{\times }y=\left( 1_{\mathbb{A}}^{N}\widehat{%
-}\left( 1_{\mathbb{A}}^{N}\widehat{-}x\right) \right) \widehat{\times }y \\
&=&y\widehat{-}\left( 1_{\mathbb{A}}^{N}\widehat{-}x\right) \widehat{\times }%
y \\
&=&\underset{n=0}{\overset{\infty }{\sum }}\left( 1_{\mathbb{A}}^{N}\widehat{%
-}x\right) ^{n}\widehat{-}\left( 1_{\mathbb{A}}^{N}\widehat{-}x\right) 
\widehat{\times }\underset{n=0}{\overset{\infty }{\sum }}\left( 1_{\mathbb{A}%
}^{N}\widehat{-}x\right) ^{n} \\
&=&1_{\mathbb{A}}^{N}\widehat{+}\underset{n=1}{\overset{\infty }{\sum }}%
\left( 1_{\mathbb{A}}^{N}\widehat{-}x\right) ^{n}\widehat{-}\underset{n=1}{%
\overset{\infty }{\sum }}\left( 1_{\mathbb{A}}^{N}\widehat{-}x\right) ^{n} \\
&=&1_{\mathbb{A}}^{N}.
\end{eqnarray*}%
This completes the proof.
\end{proof}

The following example demonstrates the significance of Proposition 1.

\begin{example}
Consider the \textit{Banach }$%
\mathbb{C}
\left( N\right) -$\textit{algebra }$C_{N}\left( \Omega \right) $ given in
Example 2, the $N-$subalgebra $%
\mathbb{C}
\left( N\right) \left[ z\right] $ given in Example 4 and the functions $f,g$
in Example 5. We get%
\begin{eqnarray*}
\overset{..}{\parallel }f-1_{C_{N}\left( \Omega \right) }^{N}\overset{..}{%
\parallel }_{C_{N}\left( \Omega \right) } &=&\underset{z\in \Omega }{\max }%
\overset{..}{\parallel }\left( f-1_{C_{N}\left( \Omega \right) }^{N}\right)
\left( z\right) \overset{..}{\parallel } \\
&=&\underset{z\in \Omega }{\max }\overset{..}{\parallel }z\overset{..}{%
\parallel }=\frac{\overset{..}{1}}{\overset{..}{2}}\beta \overset{..}{<}%
\overset{..}{1}
\end{eqnarray*}%
and this inequality says that the function $f$ is $N-$invertible and $%
\underset{n=0}{\overset{\infty }{\sum }}\left( \ominus z\right)
^{n}=f^{-1_{N}}\left( z\right) $ by Proposition 1. We have shown that $%
f^{-1_{N}}=g$ in Example 5, so $\underset{n=0}{\overset{\infty }{\sum }}%
\left( \ominus z\right) ^{n}=\frac{1_{%
\mathbb{C}
\left( N\right) }}{z\oplus 1_{%
\mathbb{C}
\left( N\right) }}.$
\end{example}

\begin{proposition}
The group $\mathbb{A}^{-1_{N}}$ is an open subset of $\mathbb{A}.$ If $%
\overset{..}{\parallel }x\widehat{-}x_{0}\overset{..}{\parallel }_{\mathbb{A}%
}\overset{..}{<}\frac{\overset{..}{1}}{\overset{..}{\parallel }x_{0}^{-1_{N}}%
\overset{..}{\parallel }_{\mathbb{A}}}\beta $ for an element $x_{0}\in 
\mathbb{A}^{-1_{N}},$ then $x$ is $N-$invertible and $x^{-1_{N}}$ is
represented by%
\begin{equation*}
x^{-1_{N}}=\left( \underset{n=0}{\overset{\infty }{\sum }}\left[
x_{0}^{-1_{N}}\widehat{\times }\left( x_{0}\widehat{-}x\right) \right]
^{n}\right) \widehat{\times }x_{0}^{-1_{N}}.
\end{equation*}
\end{proposition}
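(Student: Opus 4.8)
The plan is to reduce everything to Proposition 1 by left-multiplying with $x_{0}^{-1_{N}}$, exactly as in the classical Neumann-series argument, and then to read off openness as a corollary of the quantitative statement. First I would fix $x_{0}\in \mathbb{A}^{-1_{N}}$ and an arbitrary $x\in \mathbb{A}$ satisfying the hypothesis $\overset{..}{\parallel }x\widehat{-}x_{0}\overset{..}{\parallel }_{\mathbb{A}}\overset{..}{<}\frac{\overset{..}{1}}{\overset{..}{\parallel }x_{0}^{-1_{N}}\overset{..}{\parallel }_{\mathbb{A}}}\beta$, and put $y=x_{0}^{-1_{N}}\widehat{\times }x$. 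Using the distributive law of the $\widehat{\times }$-multiplication together with $x_{0}^{-1_{N}}\widehat{\times }x_{0}=1_{\mathbb{A}}^{N}$, I would verify the key identity
\begin{equation*}
1_{\mathbb{A}}^{N}\widehat{-}\left( x_{0}^{-1_{N}}\widehat{\times }x\right) =x_{0}^{-1_{N}}\widehat{\times }x_{0}\widehat{-}x_{0}^{-1_{N}}\widehat{\times }x=x_{0}^{-1_{N}}\widehat{\times }\left( x_{0}\widehat{-}x\right) .
\end{equation*}

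Next I would estimate the norm of this element by the $N$-multiplicative inequality and the hypothesis, using that the $\beta $-norm is unchanged under the additive $N$-inverse (so $\overset{..}{\parallel }x_{0}\widehat{-}x\overset{..}{\parallel }_{\mathbb{A}}=\overset{..}{\parallel }x\widehat{-}x_{0}\overset{..}{\parallel }_{\mathbb{A}}$) and that $\beta $-multiplication by the $\beta $-positive quantity $\overset{..}{\parallel }x_{0}^{-1_{N}}\overset{..}{\parallel }_{\mathbb{A}}$ preserves the strict $\beta $-ordering:
\begin{equation*}
\overset{..}{\parallel }1_{\mathbb{A}}^{N}\widehat{-}\left( x_{0}^{-1_{N}}\widehat{\times }x\right) \overset{..}{\parallel }_{\mathbb{A}}\overset{..}{\leq }\overset{..}{\parallel }x_{0}^{-1_{N}}\overset{..}{\parallel }_{\mathbb{A}}\overset{..}{\times }\overset{..}{\parallel }x\widehat{-}x_{0}\overset{..}{\parallel }_{\mathbb{A}}\overset{..}{<}\overset{..}{1}.
\end{equation*}
This places $y$ in the open ball of Proposition 1, so $y$ is $N$-invertible and $y^{-1_{N}}=\underset{n=0}{\overset{\infty }{\sum }}\left[ x_{0}^{-1_{N}}\widehat{\times }\left( x_{0}\widehat{-}x\right) \right] ^{n}$. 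Since $x=x_{0}\widehat{\times }y$ is a product of two elements of the group $\mathbb{A}^{-1_{N}}$, it is itself $N$-invertible, and by the identity $\left( x_{0}\widehat{\times }y\right) ^{-1_{N}}=y^{-1_{N}}\widehat{\times }x_{0}^{-1_{N}}$ recorded in Remark 3 I would obtain precisely the claimed formula for $x^{-1_{N}}$.

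For the openness statement, I would observe that $\overset{..}{\parallel }x_{0}^{-1_{N}}\overset{..}{\parallel }_{\mathbb{A}}\overset{..}{>}\overset{..}{0}$ (otherwise $x_{0}^{-1_{N}}=0_{\mathbb{A}}^{N}$ would force $1_{\mathbb{A}}^{N}=0_{\mathbb{A}}^{N}$ via norm axiom (i)), so the radius $\frac{\overset{..}{1}}{\overset{..}{\parallel }x_{0}^{-1_{N}}\overset{..}{\parallel }_{\mathbb{A}}}\beta $ is a genuine $\beta $-positive number; the quantitative part just proved shows that the entire open ball of this radius about $x_{0}$ lies in $\mathbb{A}^{-1_{N}}$, so every point of $\mathbb{A}^{-1_{N}}$ is an $N$-interior point and $\mathbb{A}^{-1_{N}}$ is $N$-open. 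The conceptual content here is identical to the classical case; the only places I would proceed carefully are the transfer of the elementary inequality through $\beta $-arithmetic — namely the monotonicity of $\overset{..}{\times }$ under $\overset{..}{\leq }$ and the well-definedness of the $\beta $-division in the radius — and confirming that the Neumann-series manipulation and the norm symmetry hold verbatim for the hatted operations. These are the expected (mild) obstacles, and none of them requires more than the axioms and Proposition 1 already established.
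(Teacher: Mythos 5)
Your proposal is correct and follows essentially the same route as the paper: both reduce to Proposition 1 by estimating $\overset{..}{\parallel }1_{\mathbb{A}}^{N}\widehat{-}x_{0}^{-1_{N}}\widehat{\times }x\overset{..}{\parallel }_{\mathbb{A}}=\overset{..}{\parallel }x_{0}^{-1_{N}}\widehat{\times }\left( x_{0}\widehat{-}x\right) \overset{..}{\parallel }_{\mathbb{A}}\overset{..}{<}\overset{..}{1}$ via the $N$-multiplicative inequality, invert $x_{0}^{-1_{N}}\widehat{\times }x$ by the Neumann series, and recover $x^{-1_{N}}$ by right-multiplying with $x_{0}^{-1_{N}}$. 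Your write-up is in fact slightly tighter than the paper's, since you make explicit two points the paper leaves implicit — that $x=x_{0}\widehat{\times }\left( x_{0}^{-1_{N}}\widehat{\times }x\right) $ is invertible as a product of invertibles before its inverse is manipulated, and that the quantitative bound yields the openness claim stated in the proposition.
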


\begin{proof}
Let $x_{0}\in \mathbb{A}^{-1_{N}}$ and $\overset{..}{\parallel }x\widehat{-}%
x_{0}\overset{..}{\parallel }_{\mathbb{A}}\overset{..}{<}\frac{\overset{..}{1%
}}{\overset{..}{\parallel }x_{0}^{-1_{N}}\overset{..}{\parallel }_{\mathbb{A}%
}}\beta .$ Then, we have%
\begin{equation*}
\overset{..}{\parallel }I\widehat{-}x_{0}^{-1_{N}}\widehat{\times }x\overset{%
..}{\parallel }_{\mathbb{A}}=\overset{..}{\parallel }x_{0}^{-1_{N}}\widehat{%
\times }\left( x_{0}\widehat{-}x\right) \overset{..}{\parallel }_{\mathbb{A}}%
\overset{..}{\leq }\overset{..}{\parallel }x_{0}^{-1_{N}}\overset{..}{%
\parallel }_{\mathbb{A}}\overset{..}{\times }\overset{..}{\parallel }x_{0}%
\widehat{-}x\overset{..}{\parallel }_{\mathbb{A}}\overset{..}{<}\overset{..}{%
1}.
\end{equation*}%
This implies that%
\begin{equation*}
\left( x_{0}^{-1_{N}}\widehat{\times }x\right) ^{-1_{N}}=\underset{n=0}{%
\overset{\infty }{\sum }}\left[ 1_{\mathbb{A}}^{N}\widehat{-}\left(
x_{0}^{-1_{N}}\widehat{\times }x\right) \right] ^{n},
\end{equation*}%
and so,%
\begin{equation*}
x^{-1_{N}}\widehat{\times }x_{0}=\underset{n=0}{\overset{\infty }{\sum }}%
\left[ x_{0}^{-1_{N}}\widehat{\times }\left( x_{0}\widehat{-}x\right) \right]
^{n}.
\end{equation*}

Hence it follows that $x^{-1_{N}}=\left( \underset{n=0}{\overset{\infty }{%
\sum }}\left[ x_{0}^{-1_{N}}\widehat{\times }\left( x_{0}\widehat{-}x\right) %
\right] ^{n}\right) \widehat{\times }x_{0}^{-1_{N}}.$
\end{proof}

\begin{corollary}
The function $\mathbb{A}^{-1_{N}}\rightarrow \mathbb{A}^{-1_{N}},x%
\rightarrow $ $x^{-1_{N}}$ is continuous in norm $\overset{..}{\parallel }.%
\overset{..}{\parallel }_{\mathbb{A}}$.
\end{corollary}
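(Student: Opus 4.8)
The plan is to establish continuity pointwise: fixing an arbitrary $x_0\in\mathbb{A}^{-1_N}$, I will produce for each $\beta$-positive $\varepsilon$ a $\beta$-positive $\delta$ such that $\overset{..}{\parallel}x\widehat{-}x_0\overset{..}{\parallel}_{\mathbb{A}}\overset{..}{<}\delta$ forces $\overset{..}{\parallel}x^{-1_N}\widehat{-}x_0^{-1_N}\overset{..}{\parallel}_{\mathbb{A}}\overset{..}{<}\varepsilon$. The engine is Proposition 2 together with the resolvent-type identity
\[
x^{-1_N}\widehat{-}x_0^{-1_N}=x^{-1_N}\widehat{\times}\left(x_0\widehat{-}x\right)\widehat{\times}x_0^{-1_N},
\]
which I would verify by expanding the right-hand side as $x^{-1_N}\widehat{\times}x_0\widehat{\times}x_0^{-1_N}\widehat{-}x^{-1_N}\widehat{\times}x\widehat{\times}x_0^{-1_N}$ and invoking $x\widehat{\times}x^{-1_N}=1_{\mathbb{A}}^N$ and $x_0\widehat{\times}x_0^{-1_N}=1_{\mathbb{A}}^N$.

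First I would restrict to the ball on which Proposition 2 applies, namely $\overset{..}{\parallel}x\widehat{-}x_0\overset{..}{\parallel}_{\mathbb{A}}\overset{..}{<}\overset{..}{1}\overset{..}{/}\overset{..}{\parallel}x_0^{-1_N}\overset{..}{\parallel}_{\mathbb{A}}$; there $x$ is $N$-invertible with $x^{-1_N}=\left(\underset{n=0}{\overset{\infty}{\sum}}\left[x_0^{-1_N}\widehat{\times}\left(x_0\widehat{-}x\right)\right]^n\right)\widehat{\times}x_0^{-1_N}$. Applying the $N$-multiplicative inequality and the $\beta$-triangle inequality term by term, and summing the resulting $\beta$-geometric series exactly as in the proof of Proposition 1, I obtain the bound
\[
\overset{..}{\parallel}x^{-1_N}\overset{..}{\parallel}_{\mathbb{A}}\overset{..}{\leq}\overset{..}{\parallel}x_0^{-1_N}\overset{..}{\parallel}_{\mathbb{A}}\overset{..}{/}\left(\overset{..}{1}\overset{..}{-}\overset{..}{\parallel}x_0^{-1_N}\overset{..}{\parallel}_{\mathbb{A}}\overset{..}{\times}\overset{..}{\parallel}x_0\widehat{-}x\overset{..}{\parallel}_{\mathbb{A}}\right),
\]
which keeps $\overset{..}{\parallel}x^{-1_N}\overset{..}{\parallel}_{\mathbb{A}}$ $\beta$-bounded as $x$ approaches $x_0$.

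Combining the identity with the $N$-multiplicative inequality gives
\[
\overset{..}{\parallel}x^{-1_N}\widehat{-}x_0^{-1_N}\overset{..}{\parallel}_{\mathbb{A}}\overset{..}{\leq}\overset{..}{\parallel}x^{-1_N}\overset{..}{\parallel}_{\mathbb{A}}\overset{..}{\times}\overset{..}{\parallel}x_0\widehat{-}x\overset{..}{\parallel}_{\mathbb{A}}\overset{..}{\times}\overset{..}{\parallel}x_0^{-1_N}\overset{..}{\parallel}_{\mathbb{A}},
\]
and substituting the previous bound shows the right-hand side is $\beta$-dominated by $\overset{..}{\parallel}x_0^{-1_N}\overset{..}{\parallel}_{\mathbb{A}}^{\overset{..}{2}}\overset{..}{\times}\overset{..}{\parallel}x_0\widehat{-}x\overset{..}{\parallel}_{\mathbb{A}}$ divided, in $\beta$-arithmetic, by the factor $\overset{..}{1}\overset{..}{-}\overset{..}{\parallel}x_0^{-1_N}\overset{..}{\parallel}_{\mathbb{A}}\overset{..}{\times}\overset{..}{\parallel}x_0\widehat{-}x\overset{..}{\parallel}_{\mathbb{A}}$. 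As $\overset{..}{\parallel}x\widehat{-}x_0\overset{..}{\parallel}_{\mathbb{A}}\to\overset{..}{0}$ this expression tends to $\overset{..}{0}$, so a suitable explicit $\delta$ (for instance any $\beta$-positive number below both $\overset{..}{1}\overset{..}{/}\left(\overset{..}{2}\overset{..}{\times}\overset{..}{\parallel}x_0^{-1_N}\overset{..}{\parallel}_{\mathbb{A}}\right)$ and an appropriate $\beta$-multiple of $\varepsilon$) forces $\overset{..}{\parallel}x^{-1_N}\widehat{-}x_0^{-1_N}\overset{..}{\parallel}_{\mathbb{A}}\overset{..}{<}\varepsilon$, which is exactly continuity at $x_0$.

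I expect the main obstacle to be the $\beta$-arithmetic bookkeeping: summing the $\beta$-geometric series and keeping the denominator $\overset{..}{1}\overset{..}{-}\overset{..}{\parallel}x_0^{-1_N}\overset{..}{\parallel}_{\mathbb{A}}\overset{..}{\times}\overset{..}{\parallel}x_0\widehat{-}x\overset{..}{\parallel}_{\mathbb{A}}$ $\beta$-bounded away from $\overset{..}{0}$. The cleanest route is to transport the whole estimate through the generator $\beta$ (equivalently the isomorphism $\iota$) to the classical arithmetic, where the geometric-series sum and the final $\varepsilon$--$\delta$ bound reduce to the standard Banach-algebra computation, and then pull the inequalities back to $\mathbb{R}_{\beta}$.
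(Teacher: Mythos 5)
Your proof is correct, and it runs on the same engine as the paper's: Proposition~2's series representation of $x^{-1_{N}}$ near an invertible $x_{0}$, followed by summation of the resulting $\beta$-geometric series. The decomposition differs in one step. Abbreviating $a=\overset{..}{\parallel }x_{0}^{-1_{N}}\overset{..}{\parallel }_{\mathbb{A}}$ and $b=\overset{..}{\parallel }x_{0}\widehat{-}x\overset{..}{\parallel }_{\mathbb{A}}$, and writing the estimates in classical shorthand for the corresponding $\beta$-arithmetic operations, the paper subtracts the $n=0$ term from the series $x^{-1_{N}}=\bigl(\sum_{n\geq 0}[x_{0}^{-1_{N}}\widehat{\times }(x_{0}\widehat{-}x)]^{n}\bigr)\widehat{\times }x_{0}^{-1_{N}}$ and bounds the tail, getting $a\cdot ab/(1-ab)$; you instead pass through the resolvent identity $x^{-1_{N}}\widehat{-}x_{0}^{-1_{N}}=x^{-1_{N}}\widehat{\times }(x_{0}\widehat{-}x)\widehat{\times }x_{0}^{-1_{N}}$ together with the bound $\overset{..}{\parallel }x^{-1_{N}}\overset{..}{\parallel }_{\mathbb{A}}\leq a/(1-ab)$, arriving at the identical quantity $a^{2}b/(1-ab)$. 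The difference pays off at the last step: the paper's final displayed inequality claims $a\cdot ab/(1-ab)\leq a^{2}b$, which, since $0<1-ab<1$ whenever $x\neq x_{0}$, is false as written (continuity still follows, because the left side tends to $\overset{..}{0}$ as $b\rightarrow 0$, but the stated bound does not hold); your explicit restriction of $\delta$ below $1/(2a)$ keeps the denominator at least $1/2$, yields the valid bound $2a^{2}b$, and closes the $\varepsilon$--$\delta$ argument cleanly. Your closing suggestion to transport the bookkeeping through the generator $\beta$ (equivalently through $\iota$) to classical arithmetic and back is also legitimate, since $\beta$ is an order-preserving bijection of $\mathbb{R}$ onto $\mathbb{R}_{\beta}$, and it is the standard device for taming $\beta$-arithmetic estimates in this setting.
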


\begin{proof}
We have by previous proposition and Example 4 in \cite{7} 
\begin{eqnarray*}
\overset{..}{\parallel }x^{-1_{N}}\widehat{-}x_{0}^{-1_{N}}\overset{..}{%
\parallel }_{\mathbb{A}} &=&\overset{..}{\parallel }\left( \underset{n=0}{%
\overset{\infty }{\sum }}\left[ x_{0}^{-1_{N}}\widehat{\times }\left( x_{0}%
\widehat{-}x\right) \right] ^{n}\right) \widehat{\times }x_{0}^{-1_{N}}%
\widehat{-}x_{0}^{-1_{N}}\overset{..}{\parallel }_{\mathbb{A}} \\
&&\overset{..}{\leq }\overset{..}{\parallel }x_{0}^{-1_{N}}\overset{..}{%
\parallel }_{\mathbb{A}}\overset{..}{\times }\underset{n=1}{\overset{\infty }%
{\overset{..}{\sum }}}\left( \overset{..}{\parallel }x_{0}^{-1_{N}}\overset{%
..}{\parallel }_{\mathbb{A}}\overset{..}{\times }\overset{..}{\parallel }%
x_{0}\widehat{-}x\overset{..}{\parallel }_{\mathbb{A}}\right) ^{n} \\
&=&\overset{..}{\parallel }x_{0}^{-1_{N}}\overset{..}{\parallel }_{\mathbb{A}%
}\overset{..}{\times }\left[ \frac{\overset{..}{\parallel }x_{0}^{-1_{N}}%
\overset{..}{\parallel }_{\mathbb{A}}\overset{..}{\times }\overset{..}{%
\parallel }x_{0}\widehat{-}x\overset{..}{\parallel }_{\mathbb{A}}}{\overset{%
..}{1}\overset{..}{-}\overset{..}{\parallel }x_{0}^{-1_{N}}\overset{..}{%
\parallel }_{\mathbb{A}}\overset{..}{\times }\overset{..}{\parallel }x_{0}%
\widehat{-}x\overset{..}{\parallel }_{\mathbb{A}}}\beta \right] \\
&&\overset{..}{\leq }\overset{..}{\parallel }x_{0}^{-1_{N}}\overset{..}{%
\parallel }_{\mathbb{A}}^{\overset{..}{2}}\overset{..}{\times }\overset{..}{%
\parallel }x_{0}\widehat{-}x\overset{..}{\parallel }_{\mathbb{A}}
\end{eqnarray*}%
and the result follows as required.
\end{proof}

Now, we give a new concept as follows:

\begin{definition}
Let $\mathbb{A}$ and $\mathbb{B}$ be $%
\mathbb{C}
\left( N\right) -$\textit{algebras endowed with the algebraic operations
(addition, scalar multiplication, multiplication) }$\widehat{+},\widehat{%
\cdot },\widehat{\times }$ and $\widehat{\widehat{+}},\widehat{\widehat{%
\cdot }},\widehat{\widehat{\times }}$ respectively, \textit{and }$\varphi
_{N}:\mathbb{A}\rightarrow \mathbb{B}$ be a mapping. If $\varphi _{N}\left( x%
\widehat{+}\lambda \widehat{\cdot }y\right) =\varphi _{N}\left( x\right) 
\widehat{\widehat{\times }}\lambda \widehat{\widehat{\cdot }}\varphi
_{N}\left( y\right) $ for all $x,y\in \mathbb{A}$ and for all $\lambda \in 
\mathbb{C}
\left( N\right) $, then the mapping $\varphi _{N}$ is called a $%
\mathbb{C}
\left( N\right) -$\textit{linear map. Also, a }$%
\mathbb{C}
\left( N\right) -$algebra homomorphism $\varphi _{N}:\mathbb{A}\rightarrow 
\mathbb{B}$ is a $%
\mathbb{C}
\left( N\right) -$linear map such that $\varphi _{N}\left( x\widehat{\times }%
y\right) =\varphi _{N}\left( x\right) \widehat{\widehat{\times }}\varphi
_{N}\left( y\right) $ \textit{\ for all }$x,y\in \mathbb{A}$. \textit{If }$%
\mathbb{B}=%
\mathbb{C}
\left( N\right) ,$ then $%
\mathbb{C}
\left( N\right) -$\textit{linear map} $\varphi _{N}$ is called a $%
\mathbb{C}
\left( N\right) -$\textit{linear functional. }
\end{definition}

\begin{definition}
Let $\mathbb{A}$ and $\mathbb{B}$ be $%
\mathbb{C}
\left( N\right) -$\textit{algebras and }$\varphi _{N}:\mathbb{A}\rightarrow 
\mathbb{B}$ be a $%
\mathbb{C}
\left( N\right) -$algebra homomorphism. Then the $N-$kernel of $\varphi _{N}$
is defined by $Ker_{N}\left( \varphi _{N}\right) =\left\{ x\in \mathbb{A}%
:\varphi _{N}\left( x\right) =0_{\mathbb{B}}^{N}\right\} .$
\end{definition}

\begin{remark}
We say $\varphi _{N}$ is $N-$unital if $\mathbb{A}$ and $\mathbb{B}$ are
unital and $\varphi _{N}\left( 1_{\mathbb{A}}^{N}\right) =1_{\mathbb{B}}^{N}$%
.
\end{remark}

\begin{example}
Consider the \textit{Banach }$%
\mathbb{C}
\left( N\right) -$\textit{algebra }$C_{N}\left( \Omega \right) $ given in
Example 2 for $\Omega =\left\{ z\in 
\mathbb{C}
\left( N\right) :\overset{..}{\parallel }z\overset{..}{\parallel }\overset{..%
}{\leq }\frac{\overset{..}{1}}{\overset{..}{2}}\beta \right\} $ and $N-$%
subalgebra $I_{z}$ given in Example 3. Define the mapping%
\begin{equation*}
\varphi _{N}:C_{N}\left( \Omega \right) \rightarrow 
\mathbb{C}
\left( N\right) ,\text{ \ }\varphi _{N}\left( f\right) =f\left( 0_{%
\mathbb{C}
\left( N\right) }\right) .
\end{equation*}%
It is easy to show that%
\begin{eqnarray*}
\varphi _{N}\left( f+\lambda \cdot g\right) &=&\varphi _{N}\left( f\right)
\oplus \lambda \odot \varphi _{N}\left( g\right) , \\
\varphi _{N}\left( f\times g\right) &=&\varphi _{N}\left( f\right) \odot
\varphi _{N}\left( g\right) ,
\end{eqnarray*}%
for all $f,g\in C_{N}\left( \Omega \right) $ and for all $\lambda \in 
\mathbb{C}
\left( N\right) .$ Then, the mapping $\varphi _{N}$ is a $%
\mathbb{C}
\left( N\right) -$\textit{linear functional. }Also, we get%
\begin{eqnarray*}
Ker_{N}\left( \varphi _{N}\right) &=&\left\{ f\in C_{N}\left( \Omega \right)
:\varphi _{N}\left( f\right) =0_{%
\mathbb{C}
\left( N\right) }\right\} \\
&=&\left\{ f\in C_{N}\left( \Omega \right) :f\left( 0_{%
\mathbb{C}
\left( N\right) }\right) =0_{%
\mathbb{C}
\left( N\right) }\right\}
\end{eqnarray*}%
and $\varphi _{N}$ is $N-$unital since $\varphi _{N}\left( 1_{C_{N}\left(
\Omega \right) }^{N}\right) =1_{%
\mathbb{C}
\left( N\right) }.$
\end{example}

\begin{proposition}
Let $\mathbb{A}$ be a $%
\mathbb{C}
\left( N\right) -$\textit{algebra }with a unit $1_{\mathbb{A}}^{N}.$ If $%
\varphi _{N}:\mathbb{A}\rightarrow 
\mathbb{C}
\left( N\right) $ is a $%
\mathbb{C}
\left( N\right) -$algebra homomorphism, then $\varphi _{N}\left( 1_{\mathbb{A%
}}^{N}\right) =1_{%
\mathbb{C}
\left( N\right) },$ and $\varphi _{N}\left( x\right) \neq 0_{%
\mathbb{C}
\left( N\right) }$ for every $N-$invertible $x\in \mathbb{A}$.
\end{proposition}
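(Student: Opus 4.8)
The plan is to lean on exactly two facts that are already available: that $\varphi_N$ respects $N$-multiplication (from the definition of a $\mathbb{C}\left( N\right)$-algebra homomorphism, which for target $\mathbb{B}=\mathbb{C}\left( N\right)$ reads $\varphi_N\left( x\widehat{\times}y\right)=\varphi_N\left( x\right)\odot\varphi_N\left( y\right)$, exactly as computed in Example 6), and that $\mathbb{C}\left( N\right)$ is a field, as recorded in the preliminaries. For the first assertion I would feed the idempotent unit $1_{\mathbb{A}}^{N}$ through $\varphi_N$. Since $1_{\mathbb{A}}^{N}\widehat{\times}1_{\mathbb{A}}^{N}=1_{\mathbb{A}}^{N}$, multiplicativity gives $\varphi_N\left( 1_{\mathbb{A}}^{N}\right)=\varphi_N\left( 1_{\mathbb{A}}^{N}\right)\odot\varphi_N\left( 1_{\mathbb{A}}^{N}\right)$. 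Writing $e:=\varphi_N\left( 1_{\mathbb{A}}^{N}\right)\in\mathbb{C}\left( N\right)$, this says $e$ is idempotent, i.e. $e\odot\left( e\ominus 1_{\mathbb{C}\left( N\right)}\right)=0_{\mathbb{C}\left( N\right)}$.

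Now I invoke that $\mathbb{C}\left( N\right)$ is a field and hence has no nonzero zero divisors, so the displayed factorization forces $e=0_{\mathbb{C}\left( N\right)}$ or $e=1_{\mathbb{C}\left( N\right)}$. To discard the first alternative I would use the unit once more: if $e=0_{\mathbb{C}\left( N\right)}$, then for every $x\in\mathbb{A}$ we would get $\varphi_N\left( x\right)=\varphi_N\left( x\widehat{\times}1_{\mathbb{A}}^{N}\right)=\varphi_N\left( x\right)\odot e=0_{\mathbb{C}\left( N\right)}$, making $\varphi_N$ the zero map. Since a genuine character $\varphi_N$ is nonzero (the zero homomorphism is the only object satisfying the hypotheses yet violating the conclusion, so nonvanishing is the hypothesis implicitly in force here), this case is excluded and $e=1_{\mathbb{C}\left( N\right)}$, giving $\varphi_N\left( 1_{\mathbb{A}}^{N}\right)=1_{\mathbb{C}\left( N\right)}$.

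For the second assertion, let $x\in\mathbb{A}$ be $N$-invertible with $N$-inverse $x^{-1_{N}}$, so $x\widehat{\times}x^{-1_{N}}=1_{\mathbb{A}}^{N}$. Applying $\varphi_N$ and using the first part yields $\varphi_N\left( x\right)\odot\varphi_N\left( x^{-1_{N}}\right)=\varphi_N\left( 1_{\mathbb{A}}^{N}\right)=1_{\mathbb{C}\left( N\right)}$, so $\varphi_N\left( x\right)$ has a multiplicative $\odot$-inverse in $\mathbb{C}\left( N\right)$, namely $\varphi_N\left( x^{-1_{N}}\right)$. Because $0_{\mathbb{C}\left( N\right)}$ is not $\odot$-invertible in the field $\mathbb{C}\left( N\right)$, we conclude $\varphi_N\left( x\right)\neq 0_{\mathbb{C}\left( N\right)}$, completing the argument. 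The only genuinely delicate point is the elimination of the case $e=0_{\mathbb{C}\left( N\right)}$, which is precisely where nonvanishing of $\varphi_N$ must be used; everything else is a direct transcription of the classical character computation into the $\ast$-arithmetic, resting only on the field properties of $\mathbb{C}\left( N\right)$ (absence of zero divisors and non-invertibility of $0_{\mathbb{C}\left( N\right)}$) established earlier.
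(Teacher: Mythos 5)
Your proof is correct and, at its core, runs along the same lines as the paper's: both arguments apply $\varphi_N$ to identities involving the unit and then exploit the field structure of $\mathbb{C}\left(N\right)$. The difference lies in how the first assertion is extracted. The paper picks an element $y\in\mathbb{A}$ with $\varphi_N\left(y\right)\neq 0_{\mathbb{C}\left(N\right)}$ and cancels it in $\varphi_N\left(y\right)=\varphi_N\left(y\right)\odot\varphi_N\left(1_{\mathbb{A}}^{N}\right)$ to get $\varphi_N\left(1_{\mathbb{A}}^{N}\right)=1_{\mathbb{C}\left(N\right)}$, whereas you factor the idempotent equation $e=e\odot e$ for $e=\varphi_N\left(1_{\mathbb{A}}^{N}\right)$ and rule out $e=0_{\mathbb{C}\left(N\right)}$ by observing that it would force $\varphi_N$ to vanish identically. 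These are two phrasings of the same cancellation, and both hinge on the same tacit hypothesis: that $\varphi_N$ is not the zero homomorphism, for which the conclusion $\varphi_N\left(1_{\mathbb{A}}^{N}\right)=1_{\mathbb{C}\left(N\right)}$ is simply false. The paper buries this assumption in its opening sentence --- ``let $y\in\mathbb{A}$ and $\varphi_N\left(y\right)\neq 0_{\mathbb{C}\left(N\right)}$'' presupposes that such a $y$ exists --- while your write-up has the merit of surfacing it explicitly as a hypothesis that the statement of the proposition is missing. The second assertion is handled identically in both proofs: apply $\varphi_N$ to $x\widehat{\times}x^{-1_{N}}=1_{\mathbb{A}}^{N}$ and note that $0_{\mathbb{C}\left(N\right)}$ has no $\odot$-inverse in the field $\mathbb{C}\left(N\right)$.
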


\begin{proof}
Let $y\in \mathbb{A}$ and $\varphi _{N}\left( y\right) \neq 0_{%
\mathbb{C}
\left( N\right) }.$ Then, since $\varphi _{N}\left( y\right) =\varphi
_{N}\left( y\widehat{\times }1_{\mathbb{A}}^{N}\right) =\varphi _{N}\left(
y\right) \odot \varphi _{N}\left( 1_{\mathbb{A}}^{N}\right) ,$ we have that $%
\varphi _{N}\left( 1_{\mathbb{A}}^{N}\right) =1_{%
\mathbb{C}
\left( N\right) }.$ On the other hand, if $x$ is $N-$invertible, then%
\begin{equation*}
\varphi _{N}\left( x\right) \odot \varphi _{N}\left( x^{-1_{N}}\right)
=\varphi _{N}\left( x\widehat{\times }x^{-1_{N}}\right) =\varphi _{N}\left(
1_{\mathbb{A}}^{N}\right) =1_{%
\mathbb{C}
\left( N\right) }.
\end{equation*}%
Hence it follows that $\varphi _{N}\left( x\right) \neq 0_{%
\mathbb{C}
\left( N\right) }.$ The proof is completed.
\end{proof}

\begin{theorem}
Let $\mathbb{A}$ be a Banach $%
\mathbb{C}
\left( N\right) -$\textit{algebra, }$x\in \mathbb{A}$ and $\overset{..}{%
\parallel }x\overset{..}{\parallel }_{\mathbb{A}}\overset{..}{<}\overset{..}{%
1}.$ Then, $\overset{..}{\parallel }\varphi _{N}\left( x\right) \overset{..}{%
\parallel }\overset{..}{<}\overset{..}{1}$ for every $%
\mathbb{C}
\left( N\right) -$algebra homomorphism $\varphi _{N}:\mathbb{A}\rightarrow 
\mathbb{C}
\left( N\right) $ on $\mathbb{A}$.
\end{theorem}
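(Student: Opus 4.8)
The plan is to argue by contradiction, reducing the assertion to two facts already available: an element whose distance from the unit is $\beta$-less than $\overset{..}{1}$ is $N$-invertible (Proposition 1), and a $\mathbb{C}(N)$-algebra homomorphism into $\mathbb{C}(N)$ never sends an $N$-invertible element to $0_{\mathbb{C}(N)}$ (Proposition 3). We work in the unital setting supplied by those two propositions, writing $1_{\mathbb{A}}^N$ for the unit of $\mathbb{A}$. Suppose, contrary to the claim, that some $x\in\mathbb{A}$ satisfies $\overset{..}{\parallel}x\overset{..}{\parallel}_{\mathbb{A}}\overset{..}{<}\overset{..}{1}$ while $\overset{..}{1}\overset{..}{\leq}\overset{..}{\parallel}\varphi_N(x)\overset{..}{\parallel}$. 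Put $\lambda=\varphi_N(x)\in\mathbb{C}(N)$; since $\overset{..}{0}\overset{..}{<}\overset{..}{1}\overset{..}{\leq}\overset{..}{\parallel}\lambda\overset{..}{\parallel}$, the norm of $\lambda$ is $\beta$-positive, so $\lambda\neq 0_{\mathbb{C}(N)}$ and $\lambda$ is $N$-invertible in the field $\mathbb{C}(N)$ with inverse $\lambda^{-1_N}$.

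First I would normalize $x$ by this scalar, setting $y=\lambda^{-1_N}\widehat{\cdot}x\in\mathbb{A}$. Using norm axiom (ii) of a $\mathbb{C}(N)$-normed space together with the multiplicativity $\overset{..}{\parallel}z^\ast\odot w^\ast\overset{..}{\parallel}=\overset{..}{\parallel}z^\ast\overset{..}{\parallel}\overset{..}{\times}\overset{..}{\parallel}w^\ast\overset{..}{\parallel}$ of the norm on $\mathbb{C}(N)$ recorded in the preliminaries, one computes $\overset{..}{\parallel}\lambda^{-1_N}\overset{..}{\parallel}\overset{..}{\times}\overset{..}{\parallel}\lambda\overset{..}{\parallel}=\overset{..}{\parallel}1_{\mathbb{C}(N)}\overset{..}{\parallel}=\overset{..}{1}$, whence $\overset{..}{\parallel}\lambda^{-1_N}\overset{..}{\parallel}\overset{..}{\leq}\overset{..}{1}$ because $\overset{..}{1}\overset{..}{\leq}\overset{..}{\parallel}\lambda\overset{..}{\parallel}$. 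Consequently $\overset{..}{\parallel}y\overset{..}{\parallel}_{\mathbb{A}}=\overset{..}{\parallel}\lambda^{-1_N}\overset{..}{\parallel}\overset{..}{\times}\overset{..}{\parallel}x\overset{..}{\parallel}_{\mathbb{A}}\overset{..}{\leq}\overset{..}{\parallel}x\overset{..}{\parallel}_{\mathbb{A}}\overset{..}{<}\overset{..}{1}$. On the other hand, the scalar-homogeneity of $\varphi_N$ gives $\varphi_N(y)=\lambda^{-1_N}\odot\varphi_N(x)=\lambda^{-1_N}\odot\lambda=1_{\mathbb{C}(N)}$.

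Next I would examine the element $1_{\mathbb{A}}^N\widehat{-}y$. Since $\overset{..}{\parallel}1_{\mathbb{A}}^N\widehat{-}(1_{\mathbb{A}}^N\widehat{-}y)\overset{..}{\parallel}_{\mathbb{A}}=\overset{..}{\parallel}y\overset{..}{\parallel}_{\mathbb{A}}\overset{..}{<}\overset{..}{1}$, Proposition 1 shows that $1_{\mathbb{A}}^N\widehat{-}y$ is $N$-invertible, so Proposition 3 forces $\varphi_N(1_{\mathbb{A}}^N\widehat{-}y)\neq 0_{\mathbb{C}(N)}$. But combining $\varphi_N(1_{\mathbb{A}}^N)=1_{\mathbb{C}(N)}$ (the first part of Proposition 3) with the additivity of $\varphi_N$ yields $\varphi_N(1_{\mathbb{A}}^N\widehat{-}y)=1_{\mathbb{C}(N)}\ominus\varphi_N(y)=1_{\mathbb{C}(N)}\ominus 1_{\mathbb{C}(N)}=0_{\mathbb{C}(N)}$, a contradiction. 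Hence no such $x$ exists and $\overset{..}{\parallel}\varphi_N(x)\overset{..}{\parallel}\overset{..}{<}\overset{..}{1}$ whenever $\overset{..}{\parallel}x\overset{..}{\parallel}_{\mathbb{A}}\overset{..}{<}\overset{..}{1}$.

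The obstacles here are arithmetic rather than structural: I must check that passing to the $\beta$-reciprocal reverses the $\beta$-ordering, i.e.\ that $\overset{..}{1}\overset{..}{\leq}\overset{..}{\parallel}\lambda\overset{..}{\parallel}$ indeed yields $\overset{..}{\parallel}\lambda^{-1_N}\overset{..}{\parallel}\overset{..}{\leq}\overset{..}{1}$, and that $\varphi_N$ is compatible with the operation $\widehat{-}$ on $\mathbb{A}$ and $\ominus$ on $\mathbb{C}(N)$. Both reduce, via the order-isomorphism $\iota$ between the $\alpha$- and $\beta$-arithmetics and the multiplicativity of the norm on $\mathbb{C}(N)$, to their classical counterparts; once these are verified the argument is a direct transcription of the classical fact that a character on a unital Banach algebra has norm $\overset{..}{1}$.
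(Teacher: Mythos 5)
Your proposal is correct and takes essentially the same route as the paper: both proofs rest on Proposition 1 (invertibility of $1_{\mathbb{A}}^{N}\widehat{-}\lambda ^{-1}\widehat{\cdot }x$ via the Neumann series) combined with Proposition 3 (a $\mathbb{C}\left( N\right) -$algebra homomorphism is unital and does not vanish on $N-$invertible elements). The paper shows $\varphi _{N}\left( x\right) \neq \lambda $ for every $\lambda $ with $\overset{..}{\parallel }\lambda \overset{..}{\parallel }\overset{..}{\geq }\overset{..}{1}$, whereas you specialize to $\lambda =\varphi _{N}\left( x\right) $ and run the identical computation as a proof by contradiction.
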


\begin{proof}
Let $\lambda \in 
\mathbb{C}
\left( N\right) $ and $\overset{..}{\parallel }\lambda \overset{..}{%
\parallel }\overset{..}{\geq }\overset{..}{1}.$ Then, we say that $1_{%
\mathbb{A}}^{N}\widehat{-}\lambda ^{-1}\widehat{\cdot }x$ is $N-$invertible
by Proposition 1. So, we get by Proposition 3%
\begin{equation*}
1_{%
\mathbb{C}
\left( N\right) }\ominus \lambda ^{-1}\odot \varphi \left( x\right) =\varphi
_{N}\left( 1_{\mathbb{A}}^{N}\widehat{-}\lambda ^{-1}\widehat{\cdot }%
x\right) \neq 0_{%
\mathbb{C}
\left( N\right) }.
\end{equation*}%
This implies that $\varphi _{N}\left( x\right) \neq \lambda .$ Therefore, we
obtain that $\overset{..}{\parallel }\varphi _{N}\left( x\right) \overset{..}%
{\parallel }\overset{..}{<}\overset{..}{1}$ for every $%
\mathbb{C}
\left( N\right) -$algebra homomorphism $\varphi _{N}:\mathbb{A}\rightarrow 
\mathbb{C}
\left( N\right) $ on $\mathbb{A}$. This completes the proof.
\end{proof}

\begin{corollary}
All $%
\mathbb{C}
\left( N\right) -$algebra homomorphisms $\varphi _{N}:\mathbb{A}\rightarrow 
\mathbb{C}
\left( N\right) $ of Banach $%
\mathbb{C}
\left( N\right) -$\textit{algebras are continuous.}
\end{corollary}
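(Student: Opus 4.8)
The plan is to deduce continuity directly from Theorem 2, which already shows that every $\mathbb{C}\left( N\right) $-algebra homomorphism $\varphi _{N}:\mathbb{A}\rightarrow \mathbb{C}\left( N\right) $ carries the open unit ball of $\mathbb{A}$ into the open unit ball of $\mathbb{C}\left( N\right) $. In classical functional analysis this is exactly the assertion that a linear functional is bounded, and for a $\mathbb{C}\left( N\right) $-linear map boundedness is equivalent to continuity. So the whole argument reduces to promoting the unit-ball estimate of Theorem 2 to the global Lipschitz bound
\begin{equation*}
\overset{..}{\parallel }\varphi _{N}\left( x\right) \overset{..}{\parallel }\overset{..}{\leq }\overset{..}{\parallel }x\overset{..}{\parallel }_{\mathbb{A}}\qquad \text{for all }x\in \mathbb{A},
\end{equation*}
and then reading the $\varepsilon $-$\delta $ statement off from it.

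First I would record the reduction to continuity at the origin. Since $\varphi _{N}$ is $\mathbb{C}\left( N\right) $-linear, $\varphi _{N}\left( x\right) \ominus \varphi _{N}\left( x_{0}\right) =\varphi _{N}\left( x\widehat{-}x_{0}\right) $, so once the Lipschitz bound is available, for a prescribed $\varepsilon \overset{..}{>}\overset{..}{0}$ the choice $\delta =\varepsilon $ gives $\overset{..}{\parallel }\varphi _{N}\left( x\right) \ominus \varphi _{N}\left( x_{0}\right) \overset{..}{\parallel }=\overset{..}{\parallel }\varphi _{N}\left( x\widehat{-}x_{0}\right) \overset{..}{\parallel }\overset{..}{\leq }\overset{..}{\parallel }x\widehat{-}x_{0}\overset{..}{\parallel }_{\mathbb{A}}\overset{..}{<}\varepsilon $ whenever $\overset{..}{\parallel }x\widehat{-}x_{0}\overset{..}{\parallel }_{\mathbb{A}}\overset{..}{<}\delta $, which is precisely continuity in the sense of the non-Newtonian metric $d\left( x,y\right) =\left\Vert x\widehat{-}y\right\Vert _{\mathbb{A}}$ supplied by the earlier Remark.

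The substance is the scaling argument for the Lipschitz bound. Fix $x\neq 0_{\mathbb{A}}^{N}$ and let $t$ be any $\beta $-positive number with $\overset{..}{\parallel }x\overset{..}{\parallel }_{\mathbb{A}}\overset{..}{<}t$. I would choose an $N$-invertible scalar $\lambda \in \mathbb{C}\left( N\right) $ with $\overset{..}{\parallel }\lambda \overset{..}{\parallel }=t$; a scalar of the form $\left( \overset{.}{a},\overset{..}{0}\right) $ works, since by the norm formula $\overset{..}{\parallel }\left( \overset{.}{a},\overset{..}{0}\right) \overset{..}{\parallel }=\beta \left( \left\vert \alpha ^{-1}\left( \overset{.}{a}\right) \right\vert \right) $ attains every $\beta $-positive value and such a scalar is invertible in the field $\mathbb{C}\left( N\right) $. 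From $\lambda \odot \lambda ^{-1}=1_{\mathbb{C}\left( N\right) }$, norm-multiplicativity of $\odot $, and $\overset{..}{\parallel }1_{\mathbb{C}\left( N\right) }\overset{..}{\parallel }=\overset{..}{1}$ one gets that $\overset{..}{\parallel }\lambda ^{-1}\overset{..}{\parallel }$ is the $\beta $-reciprocal of $t$; combining this with norm axiom (ii) for $\mathbb{C}\left( N\right) $-normed spaces yields $\overset{..}{\parallel }\lambda ^{-1}\widehat{\cdot }x\overset{..}{\parallel }_{\mathbb{A}}=\overset{..}{\parallel }\lambda ^{-1}\overset{..}{\parallel }\overset{..}{\times }\overset{..}{\parallel }x\overset{..}{\parallel }_{\mathbb{A}}\overset{..}{<}\overset{..}{1}$. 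Theorem 2 then gives $\overset{..}{\parallel }\varphi _{N}\left( \lambda ^{-1}\widehat{\cdot }x\right) \overset{..}{\parallel }\overset{..}{<}\overset{..}{1}$, and $\mathbb{C}\left( N\right) $-linearity together with $\varphi _{N}\left( \lambda ^{-1}\widehat{\cdot }x\right) =\lambda ^{-1}\odot \varphi _{N}\left( x\right) $ and norm-multiplicativity rewrites the left side as $\overset{..}{\parallel }\lambda ^{-1}\overset{..}{\parallel }\overset{..}{\times }\overset{..}{\parallel }\varphi _{N}\left( x\right) \overset{..}{\parallel }$, whence $\overset{..}{\parallel }\varphi _{N}\left( x\right) \overset{..}{\parallel }\overset{..}{<}t$. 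As $t$ ranges over all $\beta $-positive numbers exceeding $\overset{..}{\parallel }x\overset{..}{\parallel }_{\mathbb{A}}$, passing to the $\beta $-infimum produces the Lipschitz bound; the case $x=0_{\mathbb{A}}^{N}$ is trivial.

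I expect the main obstacle to be the bookkeeping in this scaling step rather than any conceptual difficulty: one must confirm that $\mathbb{C}\left( N\right) $ contains $N$-invertible scalars of every prescribed $\beta $-positive norm, that the homogeneity identities $\overset{..}{\parallel }\lambda \widehat{\cdot }x\overset{..}{\parallel }_{\mathbb{A}}=\overset{..}{\parallel }\lambda \overset{..}{\parallel }\overset{..}{\times }\overset{..}{\parallel }x\overset{..}{\parallel }_{\mathbb{A}}$ and $\varphi _{N}\left( \lambda \widehat{\cdot }x\right) =\lambda \odot \varphi _{N}\left( x\right) $ hold in the $\ast $-calculus, and that the order-limiting passage from the strict inequality $\overset{..}{\parallel }\varphi _{N}\left( x\right) \overset{..}{\parallel }\overset{..}{<}t$ for all $t\overset{..}{>}\overset{..}{\parallel }x\overset{..}{\parallel }_{\mathbb{A}}$ to the non-strict Lipschitz bound is legitimate in $\beta $-arithmetic. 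Each of these is a routine consequence of the definitions and of the identity $\overset{..}{\parallel }z^{\ast }\odot w^{\ast }\overset{..}{\parallel }=\overset{..}{\parallel }z^{\ast }\overset{..}{\parallel }\overset{..}{\times }\overset{..}{\parallel }w^{\ast }\overset{..}{\parallel }$ recorded in the preliminaries, but they are the points at which the non-Newtonian setting requires genuine verification rather than a verbatim copy of the classical argument.
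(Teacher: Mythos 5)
Your proposal is correct and takes the route the paper intends: the corollary is stated without proof as an immediate consequence of Theorem 2, and your scaling argument---choosing invertible scalars $\left( \overset{.}{a},\overset{..}{0}\right) $ of prescribed $\beta -$positive norm to promote the unit-ball estimate of Theorem 2 to the bound $\overset{..}{\parallel }\varphi _{N}\left( x\right) \overset{..}{\parallel }\overset{..}{\leq }\overset{..}{\parallel }x\overset{..}{\parallel }_{\mathbb{A}}$, then reading off continuity from $\mathbb{C}\left( N\right) -$linearity---is precisely the standard deduction being invoked, with the non-Newtonian bookkeeping (norm multiplicativity of $\odot $, homogeneity axiom (ii), order transfer under $\beta $) verified where it matters. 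The only caveat, inherited from the paper rather than introduced by you, is that Theorem 2's proof tacitly assumes $\mathbb{A}$ is unital (it applies Proposition 1 and Proposition 3 to $1_{\mathbb{A}}^{N}\widehat{-}\lambda ^{-1}\widehat{\cdot }x$), so this corollary carries the same hidden hypothesis.
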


In the rest of this section, we define the notion of an $N-$ideal and
discuss some relevant findings.

\begin{definition}
Let $\mathbb{A}$ be a $%
\mathbb{C}
\left( N\right) -$\textit{algebra and }$I$ be an $N-$subalgebra of $\mathbb{A%
}$. If $x\widehat{\times }y\in I$ (respectively, $y\widehat{\times }x\in I$)
whenever $x\in \mathbb{A}$ and $y\in I,$ then $I$ is said to be a left
(respectively, right) $N-$ideal of $\mathbb{A}$. An $N-$ideal of $\mathbb{A}$
is an $N-$subalgebra that is simultaneously a left and a right $N-$ideal of $%
\mathbb{A}$. Obviously, $\left\{ 0_{\mathbb{A}}\right\} $ and $\mathbb{A}$
are $N-$ideals in $\mathbb{A}$. An $N-$ideal $I\neq \mathbb{A}$ is called an 
$N-$proper ideal of $\mathbb{A}$. An $N-$maximal ideal in $\mathbb{A}$ is an 
$N-$proper ideal that is not contained in any $N-$ideal except $I$ itself
and the entire $%
\mathbb{C}
\left( N\right) -$\textit{algebra} $\mathbb{A}$. An $N-$ideal $I$ in $%
\mathbb{A}$ is $N-$modular if there exists an element $y\in \mathbb{A}$ such
that the two sets%
\begin{equation*}
\mathbb{A}\widehat{\times }\left( 1_{\mathbb{A}}^{N}\widehat{-}y\right)
:=\left\{ x\widehat{-}x\widehat{\times }y:x\in \mathbb{A}\right\} \text{ and 
}\left( 1_{\mathbb{A}}^{N}\widehat{-}y\right) \widehat{\times }\mathbb{A}%
:=\left\{ x\widehat{-}y\widehat{\times }x:x\in \mathbb{A}\right\}
\end{equation*}%
are both in $I.$
\end{definition}

\begin{example}
Consider the \textit{Banach }$%
\mathbb{C}
\left( N\right) -$\textit{algebra }$C_{N}\left( \Omega \right) $ given in
Example 2 for $\Omega =\left\{ z\in 
\mathbb{C}
\left( N\right) :\overset{..}{\parallel }z\overset{..}{\parallel }\overset{..%
}{\leq }\frac{\overset{..}{1}}{\overset{..}{2}}\beta \right\} $ and $N-$%
subalgebra $I_{z}$ given in Example 3. Since 
\begin{equation*}
\left( f\times g\right) \left( z\right) =f\left( z\right) \odot g\left(
z\right) =f\left( z\right) \odot 0_{%
\mathbb{C}
\left( N\right) }=0_{%
\mathbb{C}
\left( N\right) }
\end{equation*}%
and similarly $\left( g\times f\right) \left( z\right) =0_{%
\mathbb{C}
\left( N\right) }$ for all $f\in C_{N}\left( \Omega \right) $ and for all $%
g\in I_{z},$ we can write $f\widehat{\times }g\in I_{z}$ and $g\widehat{%
\times }f\in I_{z}.$ Therefore, we obtain that $I_{z}$ is $N-$ideal of $%
C_{N}\left( \Omega \right) .$ In addition, since $I_{z}\neq C_{N}\left(
\Omega \right) ,$ $I_{z}$ is $N-$proper.
\end{example}

\begin{example}
Consider the \textit{Banach }$%
\mathbb{C}
\left( N\right) -$\textit{algebra }$C_{N}\left( \Omega \right) $ given in
Example 2 for $\Omega =\left\{ z\in 
\mathbb{C}
\left( N\right) :\overset{..}{\parallel }z\overset{..}{\parallel }\overset{..%
}{\leq }\frac{\overset{..}{1}}{\overset{..}{2}}\beta \right\} $ and $N-$%
ideal $I_{z}$ given in Example 8. We claim that $I_{z}$ is an $N-$maximal
ideal of $C_{N}\left( \Omega \right) .$ Suppose that $I_{z}$ is contained in
some larger $N-$ideals $J$. We will show that $J=C_{N}\left( \Omega \right) $
so then $I_{z}$ is an $N-$maximal ideal. Let $g\in J-I_{z}.$ Then, if we
define the mapping $h:\Omega \rightarrow 
\mathbb{C}
\left( N\right) ,$ $\ h\left( w\right) :=\frac{g\left( w\right) }{g\left(
z\right) },$ we obtain that $h\in J$ and $1_{C_{N}\left( \Omega \right)
}^{N}-h\in I_{z}$. This implies that $h+1_{C_{N}\left( \Omega \right)
}^{N}-h=1_{C_{N}\left( \Omega \right) }^{N}\in J.$ Thus, $J=C_{N}\left(
\Omega \right) $ and so $I_{z}$ is a $N-$maximal ideal.
\end{example}

\begin{theorem}
Let $\mathbb{A}$ and $\mathbb{B}$ be $%
\mathbb{C}
\left( N\right) -$\textit{algebras and }$\varphi _{N}:\mathbb{A}\rightarrow 
\mathbb{B}$ be a $%
\mathbb{C}
\left( N\right) -$algebra homomorphism. Then, $Ker_{N}\left( \varphi
_{N}\right) $ is an $N-$ideal of $\mathbb{A}$.
\end{theorem}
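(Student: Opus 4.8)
The plan is to verify directly, from the two defining clauses of Definition~20 (the $\mathbb{C}\left( N\right)$-linearity identity $\varphi_{N}\left( x\widehat{+}\lambda\widehat{\cdot}y\right)=\varphi_{N}\left( x\right)\widehat{\widehat{+}}\lambda\widehat{\widehat{\cdot}}\varphi_{N}\left( y\right)$ and the multiplicativity identity $\varphi_{N}\left( x\widehat{\times}y\right)=\varphi_{N}\left( x\right)\widehat{\widehat{\times}}\varphi_{N}\left( y\right)$), that $Ker_{N}\left( \varphi_{N}\right)=\left\{ x\in\mathbb{A}:\varphi_{N}\left( x\right)=0_{\mathbb{B}}^{N}\right\}$ meets the two requirements of Definition~22 for being an $N$-ideal: first that it is an $N$-subalgebra of $\mathbb{A}$ in the sense of Definitions~16 and~17, and then that it absorbs $N$-multiplication by arbitrary elements of $\mathbb{A}$ on both sides. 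Every step should be routed through the two homomorphism identities, never through coordinate computations in $\mathbb{C}\left( N\right)$.

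First I would establish the $N$-subspace conditions. Writing $K:=Ker_{N}\left( \varphi_{N}\right)$, I note that $\varphi_{N}\left( 0_{\mathbb{A}}^{N}\right)=0_{\mathbb{B}}^{N}$ (a $\mathbb{C}\left( N\right)$-linear map carries the zero vector to the zero vector), so $0_{\mathbb{A}}^{N}\in K$. For closure under $\widehat{+}$, if $x,y\in K$ then, using axiom (viii) of Definition~9 to write $y=1_{\mathbb{C}\left( N\right)}\widehat{\cdot}y$ and applying the linearity identity, $\varphi_{N}\left( x\widehat{+}y\right)=\varphi_{N}\left( x\right)\widehat{\widehat{+}}1_{\mathbb{C}\left( N\right)}\widehat{\widehat{\cdot}}\varphi_{N}\left( y\right)=0_{\mathbb{B}}^{N}$, so $x\widehat{+}y\in K$. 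For closure under scalar $\widehat{\cdot}$, if $x\in K$ and $\lambda\in\mathbb{C}\left( N\right)$, then writing $\lambda\widehat{\cdot}x=0_{\mathbb{A}}^{N}\widehat{+}\lambda\widehat{\cdot}x$ and applying the same identity gives $\varphi_{N}\left( \lambda\widehat{\cdot}x\right)=0_{\mathbb{B}}^{N}$, so $\lambda\widehat{\cdot}x\in K$. Hence $K$ is an $N$-subspace; to upgrade it to an $N$-subalgebra it remains only to check closure under $\widehat{\times}$, which is immediate from multiplicativity: for $x,y\in K$, $\varphi_{N}\left( x\widehat{\times}y\right)=\varphi_{N}\left( x\right)\widehat{\widehat{\times}}\varphi_{N}\left( y\right)=0_{\mathbb{B}}^{N}\widehat{\widehat{\times}}0_{\mathbb{B}}^{N}=0_{\mathbb{B}}^{N}$.

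Finally I would verify the absorption property. Given an arbitrary $x\in\mathbb{A}$ and $y\in K$, multiplicativity yields $\varphi_{N}\left( x\widehat{\times}y\right)=\varphi_{N}\left( x\right)\widehat{\widehat{\times}}\varphi_{N}\left( y\right)=\varphi_{N}\left( x\right)\widehat{\widehat{\times}}0_{\mathbb{B}}^{N}$ and symmetrically $\varphi_{N}\left( y\widehat{\times}x\right)=0_{\mathbb{B}}^{N}\widehat{\widehat{\times}}\varphi_{N}\left( x\right)$, so both products lie in $K$ once one knows that multiplication by the zero element annihilates in $\mathbb{B}$. This last fact is the only point deserving a word of justification, and I expect it to be the main (and mild) obstacle, since it is not listed among the axioms: it follows from the distributive law in the $\mathbb{C}\left( N\right)$-algebra $\mathbb{B}$, because $a\widehat{\widehat{\times}}0_{\mathbb{B}}^{N}=a\widehat{\widehat{\times}}\left( 0_{\mathbb{B}}^{N}\widehat{\widehat{+}}0_{\mathbb{B}}^{N}\right)=\left( a\widehat{\widehat{\times}}0_{\mathbb{B}}^{N}\right)\widehat{\widehat{+}}\left( a\widehat{\widehat{\times}}0_{\mathbb{B}}^{N}\right)$ forces $a\widehat{\widehat{\times}}0_{\mathbb{B}}^{N}=0_{\mathbb{B}}^{N}$ after adding the additive inverse guaranteed by Definition~9, and likewise on the other side. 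Combining the subalgebra conclusion with this two-sided absorption shows $Ker_{N}\left( \varphi_{N}\right)$ is an $N$-ideal of $\mathbb{A}$.
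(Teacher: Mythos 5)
Your proposal is correct and takes essentially the same route as the paper's proof: a direct verification, from the two homomorphism identities, that $Ker_{N}\left( \varphi _{N}\right) $ satisfies the $N-$subalgebra conditions and then absorbs $\widehat{\times }$ by arbitrary elements of $\mathbb{A}$ on both sides. The only difference is one of care, not of method: you explicitly derive additivity and homogeneity from the single combined linearity identity and prove the annihilation fact $a\widehat{\widehat{\times }}0_{\mathbb{B}}^{N}=0_{\mathbb{B}}^{N}$ from distributivity, steps the paper uses implicitly without comment.
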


\begin{proof}
Firstly, we show that $Ker_{N}\left( \varphi _{N}\right) $ is an $N-$%
subalgebra of $\mathbb{A}$. Since $\varphi _{N}\left( 0_{\mathbb{A}%
}^{N}\right) =0_{\mathbb{B}}^{N},$ we write $0_{\mathbb{A}}^{N}\in
Ker_{N}\left( \varphi _{N}\right) .$ Since%
\begin{equation*}
\varphi _{N}\left( x\widehat{+}y\right) =\varphi _{N}\left( x\right) 
\widehat{\widehat{+}}\varphi _{N}\left( y\right) =0_{\mathbb{B}}^{N}\widehat{%
\widehat{+}}0_{\mathbb{B}}^{N}=0_{\mathbb{B}}^{N},
\end{equation*}%
\begin{equation*}
\varphi _{N}\left( \lambda \widehat{\cdot }x\right) =\lambda \widehat{%
\widehat{\cdot }}\varphi _{N}\left( x\right) =\lambda \widehat{\widehat{%
\cdot }}0_{\mathbb{B}}^{N}=0_{\mathbb{B}}^{N}
\end{equation*}%
and%
\begin{equation*}
\varphi _{N}\left( x\widehat{\times }y\right) =\varphi _{N}\left( x\right) 
\widehat{\widehat{\times }}\varphi _{N}\left( y\right) =0_{\mathbb{B}}^{N}%
\widehat{\widehat{\times }}0_{\mathbb{B}}^{N}=0_{\mathbb{B}}^{N}
\end{equation*}%
for all $x,y\in Ker_{N}\left( \varphi _{N}\right) $ and for all $\lambda \in 
\mathbb{C}
\left( N\right) ,$ we obtain that $Ker_{N}\left( \varphi _{N}\right) $ is an 
$N-$subalgebra of $\mathbb{A}$.

Now, we prove that $Ker_{N}\left( \varphi _{N}\right) $ is a left and a
right $N-$ideal of $\mathbb{A}$. Since%
\begin{equation*}
\varphi _{N}\left( x\widehat{\times }y\right) =\varphi _{N}\left( x\right) 
\widehat{\widehat{\times }}\varphi _{N}\left( y\right) =\varphi _{N}\left(
x\right) \widehat{\widehat{\times }}0_{\mathbb{B}}^{N}=0_{\mathbb{B}}^{N}
\end{equation*}%
and%
\begin{equation*}
\varphi _{N}\left( y\widehat{\times }x\right) =\varphi _{N}\left( y\right) 
\widehat{\widehat{\times }}\varphi _{N}\left( x\right) =0_{\mathbb{B}}^{N}%
\widehat{\widehat{\times }}\varphi _{N}\left( x\right) =0_{\mathbb{B}}^{N}
\end{equation*}%
for all $x\in \mathbb{A}$ and for all $y\in Ker_{N}\left( \varphi
_{N}\right) ,$ we obtain that $Ker_{N}\left( \varphi _{N}\right) $ is both a
left and a right $N-$ideal of $\mathbb{A}$. This completes the proof.
\end{proof}

\begin{theorem}
Let $\mathbb{A}$ and $\mathbb{B}$ be $%
\mathbb{C}
\left( N\right) -$\textit{algebras and }$\varphi _{N}:\mathbb{A}\rightarrow 
\mathbb{B}$ be a $%
\mathbb{C}
\left( N\right) -$algebra homomorphism. Then, $\varphi _{N}\left( \mathbb{A}%
\right) $ is an $N-$subalgebra of $\mathbb{B}$.
\end{theorem}

\begin{proof}
Let $y_{1},y_{2}$ be arbitrary elements of $\varphi _{N}\left( \mathbb{A}%
\right) .$ Then, there exists $x_{1},x_{2}\in \mathbb{A}$ such that $%
y_{1}=\varphi _{N}\left( x_{1}\right) ,y_{2}=\varphi _{N}\left( x_{2}\right)
.$ So, since%
\begin{equation*}
y_{1}\widehat{\widehat{+}}y_{2}=\varphi _{N}\left( x_{1}\right) \widehat{%
\widehat{+}}\varphi _{N}\left( x_{2}\right) =\varphi _{N}\left( x_{1}%
\widehat{+}x_{2}\right) ,
\end{equation*}%
\begin{equation*}
\lambda \widehat{\widehat{\cdot }}y_{1}=\lambda \widehat{\widehat{\cdot }}%
\varphi _{N}\left( x_{1}\right) =\varphi _{N}\left( \lambda \widehat{\cdot }%
x_{1}\right)
\end{equation*}%
and%
\begin{equation*}
y_{1}\widehat{\widehat{\times }}y_{2}=\varphi _{N}\left( x_{1}\right) 
\widehat{\widehat{\times }}\varphi _{N}\left( x_{2}\right) =\varphi
_{N}\left( x_{1}\widehat{\times }x_{2}\right) ,
\end{equation*}%
then $y_{1}\widehat{\widehat{+}}y_{2},\lambda \widehat{\widehat{\cdot }}%
y_{1},y_{1}\widehat{\widehat{\times }}y_{2}\in $ $\varphi _{N}\left( \mathbb{%
A}\right) .$ This implies that $\varphi _{N}\left( \mathbb{A}\right) $ is an 
$N-$subalgebra of $\mathbb{B}$.
\end{proof}

The following example explains the validity of Theorem 3 and Theorem 4.

\begin{example}
Consider the \textit{Banach }$%
\mathbb{C}
\left( N\right) -$\textit{algebra }$C_{N}\left( \Omega \right) $ given in
Example 2 for $\Omega =\left\{ z\in 
\mathbb{C}
\left( N\right) :\overset{..}{\parallel }z\overset{..}{\parallel }\overset{..%
}{\leq }\frac{\overset{..}{1}}{\overset{..}{2}}\beta \right\} $ and the $%
\mathbb{C}
\left( N\right) -$\textit{linear functional} $\varphi _{N}$ given in Example
7. Then, $Ker_{N}\left( \varphi _{N}\right) =\left\{ f\in C_{N}\left( \Omega
\right) :f\left( 0_{%
\mathbb{C}
\left( N\right) }\right) =0_{%
\mathbb{C}
\left( N\right) }\right\} $ is an $N-$ideal of $C_{N}\left( \Omega \right) $
and $\varphi _{N}\left( C_{N}\left( \Omega \right) \right) $ is an $N-$%
subalgebra of $%
\mathbb{C}
\left( N\right) .$
\end{example}

\begin{remark}
Let $\mathbb{A}$ be a Banach $%
\mathbb{C}
\left( N\right) -$\textit{algebra and }$I$ be an $N-$ideal of $\mathbb{A}$.
Then, $\mathbb{A}/I$ is an $%
\mathbb{C}
\left( N\right) -$\textit{algebra with the }$N-$multiplication defined by%
\begin{equation*}
\left( x\widehat{+}I\right) \widehat{\times }\left( y\widehat{+}I\right) =x%
\widehat{\times }y\widehat{+}I
\end{equation*}%
for all $x\widehat{+}I,y\widehat{+}I\in \mathbb{A}/I.$ Also, $I$ is $N-$%
modular if and only if $\mathbb{A}/I$ is unital.
\end{remark}

\begin{theorem}
If $I$ is a closed $N-$ideal in a normed $%
\mathbb{C}
\left( N\right) -$\textit{algebra }$\mathbb{A}$, then $\mathbb{A}/I$ is a
normed $%
\mathbb{C}
\left( N\right) -$\textit{algebra with the }$N-$quotient norm%
\begin{equation*}
\overset{..}{\parallel }x\widehat{+}I\overset{..}{\parallel }_{\mathbb{A}/I}=%
\overset{..}{\inf }\left\{ \overset{..}{\parallel }x\widehat{+}y\overset{..}{%
\parallel }_{\mathbb{A}}:y\in I\right\} .
\end{equation*}
\end{theorem}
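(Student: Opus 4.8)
The plan is as follows. By the preceding Remark the quotient $\mathbb{A}/I$ is already a $\mathbb{C}(N)$-algebra under the $N$-multiplication $(x\widehat{+}I)\widehat{\times }(y\widehat{+}I)=x\widehat{\times }y\widehat{+}I$, so the whole task is to verify that the $N$-quotient norm is a well-defined $\mathbb{R}_{\beta }$-valued norm in the sense of Definition 10 and that it is submultiplicative. The one technical device behind every step is the bridge identity $\overset{..}{\inf }S=\beta ( \inf \beta ^{-1}(S) ) $, valid for each nonempty $S\subset \mathbb{R}_{\beta }$ that is $\beta $-bounded below, which transports any $\beta $-arithmetic manipulation of infima to an ordinary one on $\mathbb{R}$ where the classical facts apply. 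Since $\overset{..}{\parallel }.\overset{..}{\parallel }_{\mathbb{A}}$ is $\beta $-nonnegative, the set $\{ \overset{..}{\parallel }x\widehat{+}y\overset{..}{\parallel }_{\mathbb{A}}:y\in I\} $ is $\beta $-bounded below by $\overset{..}{0}$, so its $\beta $-infimum exists; and as the coset $x\widehat{+}I$ determines this set independently of the chosen representative, $\overset{..}{\parallel }x\widehat{+}I\overset{..}{\parallel }_{\mathbb{A}/I}$ is well defined.

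First I would check axiom (i) of Definition 10, the only place the hypothesis that $I$ is closed enters. Suppose $\overset{..}{\parallel }x\widehat{+}I\overset{..}{\parallel }_{\mathbb{A}/I}=\overset{..}{0}$. From the definition of the $\beta $-infimum one selects a sequence $(y_{n})\subset I$ for which $\overset{..}{\parallel }x\widehat{+}y_{n}\overset{..}{\parallel }_{\mathbb{A}}$ tends to $\overset{..}{0}$; since $I$ is an $N$-subspace we have $\widehat{-}y_{n}\in I$, and, using that the norm is invariant under additive inversion, $\overset{..}{\parallel }x\widehat{+}y_{n}\overset{..}{\parallel }_{\mathbb{A}}=\overset{..}{\parallel }(\widehat{-}y_{n})\widehat{-}x\overset{..}{\parallel }_{\mathbb{A}}=d(\widehat{-}y_{n},x)$ in the non-Newtonian metric of Remark 1. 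Hence $(\widehat{-}y_{n})$ converges to $x$, and closedness of $I$ forces $x\in I$, i.e. $x\widehat{+}I=0_{\mathbb{A}/I}$, as required.

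Next I would treat the homogeneity axiom (ii) and the triangle inequality (iii). For homogeneity the case $\lambda =0_{\mathbb{C}(N)}$ is trivial, both sides being $\overset{..}{0}$; for $\lambda \neq 0_{\mathbb{C}(N)}$ scalar multiplication by the invertible scalar $\lambda $ permutes $I$, so $\lambda \widehat{\cdot }y$ runs over $I$ as $y$ does, and Definition 10(ii) inside $\mathbb{A}$ combined with the scaling rule $\overset{..}{\inf }\{ \overset{..}{\parallel }\lambda \overset{..}{\parallel }\overset{..}{\times }t\} =\overset{..}{\parallel }\lambda \overset{..}{\parallel }\overset{..}{\times }\overset{..}{\inf }\{ t\} $ gives $\overset{..}{\parallel }\lambda \widehat{\cdot }x\widehat{+}I\overset{..}{\parallel }_{\mathbb{A}/I}=\overset{..}{\parallel }\lambda \overset{..}{\parallel }\overset{..}{\times }\overset{..}{\parallel }x\widehat{+}I\overset{..}{\parallel }_{\mathbb{A}/I}$. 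For the triangle inequality, for each $u,v\in I$ one has $u\widehat{+}v\in I$ and, by axiom (iii) in $\mathbb{A}$, $\overset{..}{\parallel }(x\widehat{+}y)\widehat{+}(u\widehat{+}v)\overset{..}{\parallel }_{\mathbb{A}}\overset{..}{\leq }\overset{..}{\parallel }x\widehat{+}u\overset{..}{\parallel }_{\mathbb{A}}\overset{..}{+}\overset{..}{\parallel }y\widehat{+}v\overset{..}{\parallel }_{\mathbb{A}}$; taking the $\beta $-infimum over $u,v\in I$ and noting that the left side dominates $\overset{..}{\parallel }(x\widehat{+}y)\widehat{+}I\overset{..}{\parallel }_{\mathbb{A}/I}$ yields axiom (iii) for the quotient norm.

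Finally I would establish submultiplicativity, where the decisive point is the ideal property. For $u,v\in I$ the expansion $(x\widehat{+}u)\widehat{\times }(y\widehat{+}v)=x\widehat{\times }y\widehat{+}(x\widehat{\times }v\widehat{+}u\widehat{\times }y\widehat{+}u\widehat{\times }v)$ has all three cross terms in $I$ because $I$ is a two-sided $N$-ideal, so $(x\widehat{+}u)\widehat{\times }(y\widehat{+}v)\in x\widehat{\times }y\widehat{+}I$. Therefore $\overset{..}{\parallel }x\widehat{\times }y\widehat{+}I\overset{..}{\parallel }_{\mathbb{A}/I}\overset{..}{\leq }\overset{..}{\parallel }(x\widehat{+}u)\widehat{\times }(y\widehat{+}v)\overset{..}{\parallel }_{\mathbb{A}}\overset{..}{\leq }\overset{..}{\parallel }x\widehat{+}u\overset{..}{\parallel }_{\mathbb{A}}\overset{..}{\times }\overset{..}{\parallel }y\widehat{+}v\overset{..}{\parallel }_{\mathbb{A}}$ by the $N$-multiplicative inequality in $\mathbb{A}$, and taking the $\beta $-infimum over $u$ and $v$ independently, via the product rule $\overset{..}{\inf }\{ s\overset{..}{\times }t\} =\overset{..}{\inf }\{ s\} \overset{..}{\times }\overset{..}{\inf }\{ t\} $ for $\beta $-nonnegative factors, delivers $\overset{..}{\parallel }(x\widehat{+}I)\widehat{\times }(y\widehat{+}I)\overset{..}{\parallel }_{\mathbb{A}/I}\overset{..}{\leq }\overset{..}{\parallel }x\widehat{+}I\overset{..}{\parallel }_{\mathbb{A}/I}\overset{..}{\times }\overset{..}{\parallel }y\widehat{+}I\overset{..}{\parallel }_{\mathbb{A}/I}$. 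The main obstacle throughout is justifying that the $\beta $-infimum commutes with $\beta $-scalar multiplication and $\beta $-products and respects the $\beta $-order; I would dispatch this once through the bridge identity, reducing each such claim to its elementary counterpart for ordinary infima on $\mathbb{R}$.
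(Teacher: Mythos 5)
Your proposal is correct, and it is in fact more complete than the paper's own proof. For the one step the paper actually carries out, the $N$-multiplicative inequality, the two arguments share the same core: expand $(x\widehat{+}x_{0})\widehat{\times }(y\widehat{+}y_{0})=x\widehat{\times }y\widehat{+}z$ with $z\in I$ by the two-sided ideal property, apply submultiplicativity in $\mathbb{A}$, and pass to infima. The paper finishes by choosing $x_{0},y_{0}\in I$ with $\varepsilon \overset{..}{+}\overset{..}{\parallel }x\widehat{+}I\overset{..}{\parallel }_{\mathbb{A}/I}\overset{..}{>}\overset{..}{\parallel }x\widehat{+}x_{0}\overset{..}{\parallel }_{\mathbb{A}}$ (citing Theorem 13(ii) of \cite{8}) and letting $\varepsilon \rightarrow \overset{..}{0}$, whereas you invoke the product-of-infima rule for $\beta -$nonnegative sets, justified through your bridge identity $\overset{..}{\inf }S=\beta \left( \inf \beta ^{-1}\left( S\right) \right) $; these are equivalent, the paper's $\varepsilon -$argument being essentially an inline proof of your product rule. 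Where you genuinely go beyond the paper is in verifying the norm axioms themselves: well-definedness on cosets, homogeneity, the triangle inequality, and, crucially, definiteness, which is the only place the hypothesis that $I$ is closed enters (your limit argument: $\overset{..}{\parallel }x\widehat{+}y_{n}\overset{..}{\parallel }_{\mathbb{A}}\rightarrow \overset{..}{0}$ forces $\widehat{-}y_{n}\rightarrow x$, hence $x\in I$ by closedness). The paper never checks these axioms and never actually uses closedness, so strictly speaking your proof establishes the theorem as stated while the paper's establishes only the submultiplicative inequality; what the paper's brevity buys is delegation of the normed-space structure to the cited literature, while your route buys a self-contained argument in which every manipulation of $\beta -$infima is reduced, once and for all, to its classical counterpart on $\mathbb{R}$.
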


\begin{proof}
Let $\varepsilon \overset{..}{>}\overset{..}{0}$ and $x,y\in \mathbb{A}$.
Then, there exist $x_{0},y_{0}\in I$ such that $\varepsilon \overset{..}{+}%
\overset{..}{\parallel }x\widehat{+}I\overset{..}{\parallel }_{\mathbb{A}/I}%
\overset{..}{>}\overset{..}{\parallel }x\widehat{+}x_{0}\overset{..}{%
\parallel }_{\mathbb{A}}$ and $\varepsilon \overset{..}{+}\overset{..}{%
\parallel }y\widehat{+}I\overset{..}{\parallel }_{\mathbb{A}/I}\overset{..}{>%
}\overset{..}{\parallel }y\widehat{+}y_{0}\overset{..}{\parallel }_{\mathbb{A%
}}$ by Theorem 13 (ii) in \cite{8}. Put $z=x_{0}\widehat{\times }y\widehat{+}%
x\widehat{\times }y_{0}\widehat{+}x_{0}\widehat{\times }y_{0}.$ It is clear
that $z\in I.$ Therefore, we can write%
\begin{equation*}
\left( \varepsilon \overset{..}{+}\overset{..}{\parallel }x\widehat{+}I%
\overset{..}{\parallel }_{\mathbb{A}/I}\right) \overset{..}{\times }\left(
\varepsilon \overset{..}{+}\overset{..}{\parallel }y\widehat{+}I\overset{..}{%
\parallel }_{\mathbb{A}/I}\right) \overset{..}{>}\overset{..}{\parallel }x%
\widehat{+}x_{0}\overset{..}{\parallel }_{\mathbb{A}}\overset{..}{\times }%
\overset{..}{\parallel }y\widehat{+}y_{0}\overset{..}{\parallel }_{\mathbb{A}%
}\overset{..}{\geq }\overset{..}{\parallel }x\widehat{\times }y\widehat{+}z%
\overset{..}{\parallel }_{\mathbb{A}}\overset{..}{\geq }\overset{..}{%
\parallel }x\widehat{\times }y\widehat{+}I\overset{..}{\parallel }_{\mathbb{A%
}/I}.
\end{equation*}%
Taking $\varepsilon \rightarrow \overset{..}{0},$ it follows that $\overset{%
..}{\parallel }x\widehat{+}I\overset{..}{\parallel }_{\mathbb{A}/I}\overset{%
..}{\times }\overset{..}{\parallel }y\widehat{+}I\overset{..}{\parallel }_{%
\mathbb{A}/I}\overset{..}{\geq }\overset{..}{\parallel }x\widehat{\times }y%
\widehat{+}I\overset{..}{\parallel }_{\mathbb{A}/I}.$ This implies that $%
\mathbb{A}/I$ is a normed $%
\mathbb{C}
\left( N\right) -$algebra. The proof is completed.
\end{proof}

\begin{theorem}
If $I$ is an $N-$ideal in a normed $%
\mathbb{C}
\left( N\right) -$\textit{algebra }$\mathbb{A}$, then the $N-$quotient map $%
\pi :\mathbb{A}\rightarrow \mathbb{A}/I$ is a $%
\mathbb{C}
\left( N\right) -$linear map.
\end{theorem}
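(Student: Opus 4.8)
The plan is to verify directly that $\pi$ satisfies the defining identity of a $\mathbb{C}\left(N\right)-$linear map, namely
\begin{equation*}
\pi\left(x\widehat{+}\lambda\widehat{\cdot}y\right)=\pi\left(x\right)\widehat{\widehat{+}}\lambda\widehat{\widehat{\cdot}}\pi\left(y\right)
\end{equation*}
for all $x,y\in\mathbb{A}$ and all $\lambda\in\mathbb{C}\left(N\right)$, where $\widehat{\widehat{+}}$ and $\widehat{\widehat{\cdot}}$ denote the $N-$addition and scalar $N-$multiplication induced on the quotient $\mathbb{A}/I$ and $\pi\left(x\right)=x\widehat{+}I$. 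First I would record the quotient operations
\begin{equation*}
\left(x\widehat{+}I\right)\widehat{\widehat{+}}\left(y\widehat{+}I\right)=\left(x\widehat{+}y\right)\widehat{+}I,\qquad\lambda\widehat{\widehat{\cdot}}\left(x\widehat{+}I\right)=\left(\lambda\widehat{\cdot}x\right)\widehat{+}I,
\end{equation*}
which are the operations making $\mathbb{A}/I$ into a $\mathbb{C}\left(N\right)-$algebra as in the preceding remark.

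The core of the argument is then a one-line computation: by the definition of $\pi$ together with the quotient operations above,
\begin{equation*}
\pi\left(x\right)\widehat{\widehat{+}}\lambda\widehat{\widehat{\cdot}}\pi\left(y\right)=\left(x\widehat{+}I\right)\widehat{\widehat{+}}\left[\left(\lambda\widehat{\cdot}y\right)\widehat{+}I\right]=\left(x\widehat{+}\lambda\widehat{\cdot}y\right)\widehat{+}I=\pi\left(x\widehat{+}\lambda\widehat{\cdot}y\right),
\end{equation*}
which is exactly the required identity. Hence $\pi$ is a $\mathbb{C}\left(N\right)-$linear map, and the proof is finished.

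The only point requiring care, and what I regard as the main (though mild) obstacle, is that these quotient operations be genuinely well defined, that is, independent of the representatives $x,y$ chosen for the cosets $x\widehat{+}I$ and $y\widehat{+}I$. This is guaranteed because $I$ is in particular an $N-$subspace of $\mathbb{A}$: if $x\widehat{+}I=x'\widehat{+}I$ and $y\widehat{+}I=y'\widehat{+}I$, then $x\widehat{-}x'\in I$ and $y\widehat{-}y'\in I$, whence $\left(x\widehat{+}\lambda\widehat{\cdot}y\right)\widehat{-}\left(x'\widehat{+}\lambda\widehat{\cdot}y'\right)\in I$ by closure of $I$ under $N-$addition and scalar $N-$multiplication. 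Once this is in hand the verification of the linearity identity is purely formal; in particular it uses no estimates involving the $N-$quotient norm, so no further difficulty arises.
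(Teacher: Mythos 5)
Your proposal is correct and follows essentially the same route as the paper: a direct one-line computation showing $\pi\left(x\widehat{+}\lambda\widehat{\cdot}y\right)=\left(x\widehat{+}\lambda\widehat{\cdot}y\right)\widehat{+}I=\pi\left(x\right)\widehat{+}\lambda\widehat{\cdot}\pi\left(y\right)$ using the quotient operations. The only difference is that you explicitly verify that the quotient operations are well defined (which the paper leaves implicit, relying on its earlier remark that $\mathbb{A}/I$ is a $\mathbb{C}\left(N\right)-$algebra), and you write the quotient-space addition correctly, whereas the paper's displayed computation carries over a typographical slip from its definition of linear map, writing $\widehat{\times}$ where the coset addition is meant.
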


\begin{proof}
We have for all $x,y\in \mathbb{A}$%
\begin{equation*}
\pi \left( x\widehat{+}\lambda \widehat{\cdot }y\right) =\left( x\widehat{+}%
\lambda \widehat{\cdot }y\right) \widehat{+}I=\left( x\widehat{+}I\right) 
\widehat{\times }\left( \lambda \widehat{\cdot }y\widehat{+}I\right) =\left(
x\widehat{+}I\right) \widehat{\times }\lambda \widehat{\cdot }\left( y%
\widehat{+}I\right) =\pi \left( x\right) \widehat{+}\lambda \widehat{\cdot }%
\pi \left( y\right) .
\end{equation*}%
This shows that $\pi :\mathbb{A}\rightarrow \mathbb{A}/I$ is a $%
\mathbb{C}
\left( N\right) -$linear map.
\end{proof}

\begin{theorem}
Let $I$ be an $N-$modular ideal of a Banach $%
\mathbb{C}
\left( N\right) -$\textit{algebra }$\mathbb{A}$ with a unit $1_{\mathbb{A}%
}^{N}$. If $I$ is $N-$proper, so is its non-Newtonian closure $\overline{I}%
^{N}.$ If $I$ is $N-$maximal, it is non-Newtonian closed.
\end{theorem}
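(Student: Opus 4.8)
The plan is to build both assertions on Proposition 1, which says that any $x$ with $\overset{..}{\parallel}x\widehat{-}1_{\mathbb{A}}^{N}\overset{..}{\parallel}_{\mathbb{A}}\overset{..}{<}\overset{..}{1}$ is $N$-invertible. The guiding observation is that an $N$-proper ideal can contain no $N$-invertible element: if $x\in I$ were $N$-invertible, then $1_{\mathbb{A}}^{N}=x\widehat{\times}x^{-1_{N}}\in I$ would force $I=\mathbb{A}$. Hence the open $N$-ball of $\beta$-radius $\overset{..}{1}$ about $1_{\mathbb{A}}^{N}$ is disjoint from $I$, and the whole argument consists in showing that this separation is inherited by $\overline{I}^{N}$. (Note that since $\mathbb{A}$ carries the unit $1_{\mathbb{A}}^{N}$, the $N$-modularity hypothesis is automatic, taking the modular element to be $1_{\mathbb{A}}^{N}$ itself, so the proof may invoke the genuine unit throughout.)

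For the first statement I would first check that $\overline{I}^{N}$ is again an $N$-ideal. Given $a,b\in\overline{I}^{N}$, an element $x\in\mathbb{A}$ and a scalar $\lambda\in\mathbb{C}\left( N\right)$, I pick sequences in $I$ that converge to $a$ and $b$ in the norm $\overset{..}{\parallel}.\overset{..}{\parallel}_{\mathbb{A}}$; continuity of $N$-addition and scalar $N$-multiplication, together with the submultiplicative inequality $\overset{..}{\parallel}x\widehat{\times}y\overset{..}{\parallel}_{\mathbb{A}}\overset{..}{\leq}\overset{..}{\parallel}x\overset{..}{\parallel}_{\mathbb{A}}\overset{..}{\times}\overset{..}{\parallel}y\overset{..}{\parallel}_{\mathbb{A}}$ that yields continuity of $N$-multiplication, then places the limits $a\widehat{+}b$, $\lambda\widehat{\cdot}a$, $x\widehat{\times}a$ and $a\widehat{\times}x$ in $\overline{I}^{N}$. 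Next, because $I$ is $N$-proper it has no $N$-invertible element, so the contrapositive of Proposition 1 gives $\overset{..}{\parallel}x\widehat{-}1_{\mathbb{A}}^{N}\overset{..}{\parallel}_{\mathbb{A}}\overset{..}{\geq}\overset{..}{1}$ for every $x\in I$. Since the $N$-norm induces the non-Newtonian metric $d(x,y)=\overset{..}{\parallel}x\widehat{-}y\overset{..}{\parallel}_{\mathbb{A}}$ of Remark 1 and is therefore $\beta$-continuous, this inequality passes to $\beta$-limits and persists on $\overline{I}^{N}$. In particular $1_{\mathbb{A}}^{N}\notin\overline{I}^{N}$, so $\overline{I}^{N}\neq\mathbb{A}$, i.e. $\overline{I}^{N}$ is $N$-proper.

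The second statement follows at once from the first: if $I$ is $N$-maximal, then $\overline{I}^{N}$ is an $N$-proper ideal with $I\subseteq\overline{I}^{N}\subsetneq\mathbb{A}$, and maximality of $I$ forces $\overline{I}^{N}=I$, so $I$ is non-Newtonian closed. I expect the only genuine difficulty to be the topological first step, namely confirming within the $\ast$-calculus that $N$-addition, scalar $N$-multiplication and especially $N$-multiplication are continuous for the $\mathbb{R}_{\beta}$-valued norm, so that $\overline{I}^{N}$ is truly an $N$-ideal and so that the inequality $\overset{..}{\geq}\overset{..}{1}$ is stable under $\beta$-limits; once these continuity facts are in hand, the rest reduces to invoking Proposition 1 and the maximality hypothesis.
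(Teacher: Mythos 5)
Your proof is correct, and it shares the paper's skeleton: invoke Proposition 1, deduce that a distinguished element lies at $\beta$-distance at least $\overset{..}{1}$ from $I$ and hence outside $\overline{I}^{N}$, conclude properness, and then obtain closedness of a maximal $I$ exactly as you do. The difference lies in which element gets separated from $\overline{I}^{N}$, and consequently in how the hypotheses are used. The paper runs the argument through the modular element $a$: if some $b\in I$ satisfied $\overset{..}{\parallel}a\widehat{-}b\overset{..}{\parallel}_{\mathbb{A}}\overset{..}{<}\overset{..}{1}$, then $y=1_{\mathbb{A}}^{N}\widehat{-}\left(a\widehat{-}b\right)$ would be $N$-invertible by Proposition 1, while modularity and the ideal property give $x\widehat{\times}y=\left(x\widehat{-}x\widehat{\times}a\right)\widehat{+}x\widehat{\times}b\in I$ for every $x\in\mathbb{A}$, whence $\mathbb{A}=\mathbb{A}\widehat{\times}y\subset I$, contradicting properness; so $a\notin\overline{I}^{N}$. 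You instead separate the unit itself, via the observation that a proper ideal contains no $N$-invertible element; this is precisely the paper's argument specialized to the modular element $1_{\mathbb{A}}^{N}$, and you correctly note that the presence of a unit makes the $N$-modularity hypothesis vacuous. What your route buys is brevity and the explicit exposure of a redundancy in the theorem's hypotheses; what the paper's route buys is an argument whose shape is the right one for the non-unital setting, where modularity is the substantive assumption (the classical context for this theorem, although the paper's reliance on Proposition 1 still ties its proof to a unit). A further point in your favor: you verify that $\overline{I}^{N}$ is again an $N$-ideal via continuity of the operations, a step the paper uses silently when it calls $\overline{I}^{N}$ a proper ideal containing $I$ in the maximality step.
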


\begin{proof}
Let $a$ be an element of $\mathbb{A}$ such that $x\widehat{-}x\widehat{%
\times }a,$ $x\widehat{-}a\widehat{\times }x\in I$ for all $x\in \mathbb{A}$%
. If $b\in I$ and $\overset{..}{\parallel }a\widehat{-}b\overset{..}{%
\parallel }_{\mathbb{A}}\overset{..}{<}\overset{..}{1},$ then $y=1_{\mathbb{A%
}}^{N}\widehat{-}\left( a\widehat{+}b\right) $ is $N-$invertible in $\mathbb{%
A}$ by Proposition 1. If $x\in \mathbb{A}$, then $x\widehat{\times }y=x%
\widehat{-}x\widehat{\times }a\widehat{+}x\widehat{\times }b\in I$ and $y\in 
\mathbb{A}^{-1_{N}},$ and hence $\mathbb{A}=\mathbb{A}\widehat{\times }%
y\subset I$ which contradicts the assumption that $I$ is $N-$proper. Then,
this shows that $\overset{..}{\parallel }x\widehat{-}b\overset{..}{\parallel 
}_{\mathbb{A}}\overset{..}{\geq }\overset{..}{1}$ for all $b\in I.$
Therefore, we deduce that $x\notin \overline{I}^{N}$ and so $\overline{I}%
^{N} $ is $N-$proper.

If $I$ is $N-$maximal, then $\overline{I}^{N}$ is a $N-$proper ideal
containing $I$ and this implies that $\overline{I}^{N}=I.$
\end{proof}

\subsection{Non-Newtonian $C^{\ast }-$Algebras}

This section is fundamentals of Non-Newtonian $C^{\ast }-$Algebras.

At first, we present the definition of a non-Newtonian $C^{\ast }-$algebra.

\begin{definition}
Let $\mathbb{A}$ be a $%
\mathbb{C}
\left( N\right) -$\textit{algebra. An }$N-$involution on $\mathbb{A}$ is a
mapping $^{\ast _{N}}:\mathbb{A}\rightarrow \mathbb{A},$ $x\rightarrow
x^{\ast _{N}}$ satisfying the following conditions:
\end{definition}

\textit{(i) }$\left( x\widehat{+}y\right) ^{\ast _{N}}=x^{\ast _{N}}\widehat{%
+}y^{\ast _{N}},$

\textit{(ii) }$\left( \lambda \widehat{\cdot }x\right) ^{\ast _{N}}=%
\overline{\lambda }^{N}\widehat{\cdot }x^{\ast _{N}},$

\textit{(iii) }$\left( x\widehat{\times }y\right) ^{\ast _{N}}=y^{\ast _{N}}%
\widehat{\times }x^{\ast _{N}},$

\textit{(iv) }$\left( x^{\ast _{N}}\right) ^{\ast _{N}}=x$

\textit{for all }$x,y\in \mathbb{A}$\textit{\ and for all }$\lambda \in 
\mathbb{C}
\left( N\right) .$\textit{\ Then, }$\mathbb{A}$\textit{\ is called a }$\ast
_{N}-$\textit{algebra (non-Newtonian }$\ast -$\textit{algebra).}

\textit{If }$\ast _{N}-$\textit{algebra }$\mathbb{A}$\textit{\ is a normed }$%
\mathbb{C}
\left( N\right) -$\textit{algebra and the }$N-$\textit{involution is
isometric, that is, }$\overset{..}{\parallel }x^{\ast _{N}}\overset{..}{%
\parallel }_{\mathbb{A}}=\overset{..}{\parallel }x\overset{..}{\parallel }_{%
\mathbb{A}}$\textit{\ for all }$x\in \mathbb{A}$\textit{, then }$\mathbb{A}$%
\textit{\ is called a normed }$\ast _{N}-$\textit{algebra (non-Newtonian
normed }$\ast -$\textit{algebra).}

\textit{If }$\ast _{N}-$\textit{algebra }$\mathbb{A}$\textit{\ is a Banach }$%
\mathbb{C}
\left( N\right) -$\textit{algebra and the }$N-$\textit{involution is
isometric, then }$\mathbb{A}$\textit{\ is called a Banach }$\ast _{N}-$%
\textit{algebra (non-Newtonian Banach }$\ast -$\textit{algebra).}

\textit{If }$\ast _{N}-$\textit{algebra }$\mathbb{A}$\textit{\ is a Banach }$%
\mathbb{C}
\left( N\right) -$\textit{algebra and its norm satisfies the equality }$%
\overset{..}{\parallel }x^{\ast _{N}}\widehat{\times }x\overset{..}{%
\parallel }_{\mathbb{A}}=\overset{..}{\parallel }x\overset{..}{\parallel }_{%
\mathbb{A}}^{\overset{..}{2}}$\textit{\ for all }$x\in \mathbb{A}$\textit{,
then }$\mathbb{A}$\textit{\ is called a }$C^{\ast _{N}}-$\textit{algebra
(non-Newtonian }$C^{\ast }-$\textit{algebra).}

\textit{Note that a }$C^{\ast _{N}}-$\textit{algebra is a Banach }$\ast
_{N}- $\textit{algebra because the equaality }$\overset{..}{\parallel }%
x^{\ast _{N}}\widehat{\times }x\overset{..}{\parallel }_{\mathbb{A}}=\overset%
{..}{\parallel }x\overset{..}{\parallel }_{\mathbb{A}}^{\overset{..}{2}}$%
\textit{\ implies }$\overset{..}{\parallel }x\overset{..}{\parallel }_{%
\mathbb{A}}\overset{..}{\leq }\overset{..}{\parallel }x^{\ast _{N}}\overset{%
..}{\parallel }_{\mathbb{A}}$\textit{\ for all }$x\in \mathbb{A}$\textit{\
and substituting }$x^{\ast _{N}}$\textit{\ for }$x$\textit{\ in this
inequality gives that }$\overset{..}{\parallel }x^{\ast _{N}}\overset{..}{%
\parallel }_{\mathbb{A}}\overset{..}{\leq }\overset{..}{\parallel }\left(
x^{\ast _{N}}\right) ^{\ast _{N}}\overset{..}{\parallel }_{\mathbb{A}}=%
\overset{..}{\parallel }x\overset{..}{\parallel }_{\mathbb{A}},$\textit{\
hence }$\overset{..}{\parallel }x^{\ast _{N}}\overset{..}{\parallel }_{%
\mathbb{A}}=\overset{..}{\parallel }x\overset{..}{\parallel }_{\mathbb{A}}$%
\textit{\ for all }$x\in \mathbb{A}$\textit{. Thus, also, the }$N-$\textit{%
involution of }$\mathbb{A}$\textit{\ is continuous.}

\textit{If a }$C^{\ast _{N}}-$\textit{algebra has a unit }$1_{\mathbb{A}%
}^{N},$\textit{\ then automatically }$\overset{..}{\parallel }1_{\mathbb{A}%
}^{N}\overset{..}{\parallel }_{\mathbb{A}}=\overset{..}{1},$\textit{\
because }$\overset{..}{\parallel }1_{\mathbb{A}}^{N}\overset{..}{\parallel }%
_{\mathbb{A}}=\overset{..}{\parallel }1_{\mathbb{A}}^{N\ast _{N}}\widehat{%
\times }1_{\mathbb{A}}^{N}\overset{..}{\parallel }_{\mathbb{A}}=\overset{..}{%
\parallel }1_{\mathbb{A}}^{N}\overset{..}{\parallel }_{\mathbb{A}}^{\overset{%
..}{2}}.$

The followings are examples of $C^{\ast _{N}}-$algebras.

\begin{example}
The Banach $%
\mathbb{C}
\left( N\right) -$algebra $%
\mathbb{C}
\left( N\right) $ forms a $C^{\ast _{N}}-$algebra with the algebraic
operations $\oplus $ and $\odot $, the norm $\overset{..}{\parallel }.%
\overset{..}{\parallel }$ defined on it and the $N-$involution $^{\ast _{N}}:%
\mathbb{C}
\left( N\right) \rightarrow 
\mathbb{C}
\left( N\right) ,$ $z\rightarrow \overline{z}$.
\end{example}

\begin{example}
Consider the \textit{Banach }$%
\mathbb{C}
\left( N\right) -$\textit{algebra }$C_{N}\left( \Omega \right) $ given in
Example 2 for $\Omega =\left\{ z\in 
\mathbb{C}
\left( N\right) :\overset{..}{\parallel }z\overset{..}{\parallel }\overset{..%
}{\leq }\frac{\overset{..}{1}}{\overset{..}{2}}\beta \right\} .$ Define the $%
N-$involution%
\begin{eqnarray*}
^{\ast _{N}} &:&C_{N}\left( \Omega \right) \rightarrow C_{N}\left( \Omega
\right) ,f\rightarrow f^{\ast _{N}} \\
f^{\ast _{N}} &:&\Omega \rightarrow 
\mathbb{C}
\left( N\right) ,\text{ \ }f^{\ast _{N}}\left( z\right) =\overline{f\left(
z\right) }.
\end{eqnarray*}%
With a direct computation it can be seen that%
\begin{eqnarray*}
\left( f+g\right) ^{\ast _{N}} &=&f^{\ast _{N}}+g^{\ast _{N}}, \\
\left( \lambda \cdot f\right) ^{\ast _{N}} &=&\overline{\lambda }^{N}\cdot
f^{\ast _{N}}, \\
\left( f\times g\right) ^{\ast _{N}} &=&g^{\ast _{N}}\times f^{\ast _{N}}, \\
\left( f^{\ast _{N}}\right) ^{\ast _{N}} &=&f
\end{eqnarray*}%
for all $f,g\in C_{N}\left( \Omega \right) $ and for all $\lambda \in 
\mathbb{C}
\left( N\right) .$ Also, we get%
\begin{equation*}
\overset{..}{\parallel }f^{\ast _{N}}\overset{..}{\parallel }_{C_{N}\left(
\Omega \right) }=\underset{z\in \Omega }{\max }\overset{..}{\parallel }%
f^{\ast _{N}}\left( z\right) \overset{..}{\parallel }=\underset{z\in \Omega }%
{\max }\overset{..}{\parallel }\overline{f\left( z\right) }\overset{..}{%
\parallel }=\underset{z\in \Omega }{\max }\overset{..}{\parallel }f\left(
z\right) \overset{..}{\parallel }=\overset{..}{\parallel }f\overset{..}{%
\parallel }_{C_{N}\left( \Omega \right) }
\end{equation*}%
and%
\begin{equation*}
\overset{..}{\parallel }f^{\ast _{N}}\times f\overset{..}{\parallel }%
_{C_{N}\left( \Omega \right) }=\underset{z\in \Omega }{\max }\overset{..}{%
\parallel }\left( f^{\ast _{N}}\times f\right) \left( z\right) \overset{..}{%
\parallel }=\underset{z\in \Omega }{\max }\overset{..}{\parallel }f\left(
z\right) \overset{..}{\parallel }^{\overset{..}{2}}=\overset{..}{\parallel }f%
\overset{..}{\parallel }_{C_{N}\left( \Omega \right) }^{\overset{..}{2}}.
\end{equation*}%
Thus, $C_{N}\left( \Omega \right) $ is a $C^{\ast _{N}}-$\textit{algebra.}
\end{example}

Let us define some special types of elements in a $C^{\ast _{N}}-$algebra.

\begin{definition}
Let $\mathbb{A}$ be a $C^{\ast _{N}}-$\textit{algebra and }$x\in \mathbb{A}$%
. Then,
\end{definition}

\qquad \textit{(i) }$x$\textit{\ is called an }$N-$\textit{self adjoint (}$%
N- $\textit{hermitian) element if }$x=x^{\ast _{N}}.$

\textit{\qquad (ii) }$x$\textit{\ is called an }$N-$\textit{normal element
if }$x\widehat{\times }x^{\ast _{N}}=x^{\ast _{N}}\widehat{\times }x.$

\textit{\qquad (iii) }$x$\textit{\ is called an }$N-$\textit{unitary element
if }$x\widehat{\times }x^{\ast _{N}}=x^{\ast _{N}}\widehat{\times }x=1_{%
\mathbb{A}}^{N}.$

\begin{example}
The set of $N-$self adjoint elements and unitary elements of $C_{N}\left(
\Omega \right) $ is 
\begin{equation*}
\left\{ f:\Omega \rightarrow 
\mathbb{C}
\left( N\right) :f\left( z\right) =\left( \overset{.}{a},\overset{..}{0}%
\right) \text{ for all }z\in \Omega ,\text{ where }a\in 
\mathbb{R}
\right\}
\end{equation*}%
and%
\begin{equation*}
\left\{ f:\Omega \rightarrow 
\mathbb{C}
\left( N\right) :\overset{..}{\parallel }f\left( z\right) \overset{..}{%
\parallel }=1_{%
\mathbb{C}
\left( N\right) }\text{ for all }z\in \Omega \right\} ,
\end{equation*}%
respectively. Since $C_{N}\left( \Omega \right) $ is commutative with
respect to multiplication $\left( \times \right) ,$ the set of $N-$normal
elements of $C_{N}\left( \Omega \right) $ is itself.
\end{example}

\begin{proposition}
\textit{Let }$A$\textit{\ be a }$C^{\ast _{N}}-$\textit{algebra and }$x\in A$%
\textit{. Then,}
\end{proposition}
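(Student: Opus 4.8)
The plan is to reduce every assertion to the defining $C^{\ast _{N}}$-identity $\overset{..}{\parallel }x^{\ast _{N}}\widehat{\times }x\overset{..}{\parallel }_{\mathbb{A}}=\overset{..}{\parallel }x\overset{..}{\parallel }_{\mathbb{A}}^{\overset{..}{2}}$, combined with the two facts already recorded in the definition of a $C^{\ast _{N}}$-algebra: the $N$-involution is isometric, so $\overset{..}{\parallel }x^{\ast _{N}}\overset{..}{\parallel }_{\mathbb{A}}=\overset{..}{\parallel }x\overset{..}{\parallel }_{\mathbb{A}}$ for every $x$, and a unital $C^{\ast _{N}}$-algebra automatically satisfies $\overset{..}{\parallel }1_{\mathbb{A}}^{N}\overset{..}{\parallel }_{\mathbb{A}}=\overset{..}{1}$. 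Throughout I would keep all norm manipulations inside $\beta $-arithmetic, i.e. use $\overset{..}{\times }$ and the $\beta $-square $\left( \cdot \right) ^{\overset{..}{2}}$, since $\overset{..}{\parallel }.\overset{..}{\parallel }_{\mathbb{A}}$ takes values in $\mathbb{R}_{\beta }$. No new machinery is needed beyond substitution into this single identity.

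First I would record the symmetric norm identity $\overset{..}{\parallel }x\widehat{\times }x^{\ast _{N}}\overset{..}{\parallel }_{\mathbb{A}}=\overset{..}{\parallel }x\overset{..}{\parallel }_{\mathbb{A}}^{\overset{..}{2}}$: apply the $C^{\ast _{N}}$-identity with $x^{\ast _{N}}$ in place of $x$ and use $\left( x^{\ast _{N}}\right) ^{\ast _{N}}=x$ together with the isometry $\overset{..}{\parallel }x^{\ast _{N}}\overset{..}{\parallel }_{\mathbb{A}}=\overset{..}{\parallel }x\overset{..}{\parallel }_{\mathbb{A}}$. For an $N$-self adjoint element the hypothesis $x=x^{\ast _{N}}$ turns the defining identity directly into $\overset{..}{\parallel }x\widehat{\times }x\overset{..}{\parallel }_{\mathbb{A}}=\overset{..}{\parallel }x\overset{..}{\parallel }_{\mathbb{A}}^{\overset{..}{2}}$, so the $N$-norm of the $N$-square equals the $\beta $-square of the norm; for an $N$-normal element $x\widehat{\times }x^{\ast _{N}}=x^{\ast _{N}}\widehat{\times }x$ lets me use either of the two identities interchangeably.

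For an $N$-unitary element I would substitute $x^{\ast _{N}}\widehat{\times }x=1_{\mathbb{A}}^{N}$ into the identity to obtain
\begin{equation*}
\overset{..}{\parallel }x\overset{..}{\parallel }_{\mathbb{A}}^{\overset{..}{2}}=\overset{..}{\parallel }1_{\mathbb{A}}^{N}\overset{..}{\parallel }_{\mathbb{A}}=\overset{..}{1},
\end{equation*}
and then pass to the $\beta $-square root, using $\sqrt[..]{\overset{..}{1}}=\overset{..}{1}$, to conclude $\overset{..}{\parallel }x\overset{..}{\parallel }_{\mathbb{A}}=\overset{..}{1}$. This last step is where I expect the only genuinely non-classical care to be required: I must invoke that $\sqrt[..]{\cdot }$ is well defined on the $\beta $-nonnegative numbers and that $\left( \cdot \right) ^{\overset{..}{2}}$ is injective there, both of which descend from the ordinary square root through the generator $\beta $. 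The algebraic substitutions are routine, so the main obstacle is purely the $\beta $-arithmetic bookkeeping of this square-root passage.

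If the statement additionally asks for the hermitian decomposition of an arbitrary $x$, I would exhibit two $N$-self adjoint elements whose combination $x=h\widehat{+}i_{\mathbb{C}\left( N\right) }\widehat{\cdot }k$ recovers $x$, obtained from $x\widehat{+}x^{\ast _{N}}$ and $x\widehat{-}x^{\ast _{N}}$ after $N$-scaling by the $\odot $-inverse of $\overset{..}{2}$ in the field $\mathbb{C}\left( N\right) $; self adjointness of each part follows from conditions (i)--(iv) of the $N$-involution, and uniqueness follows by applying the involution to a purported second decomposition and comparing. Here the only delicate points are again non-Newtonian: that $\overset{..}{2}$ is $\odot $-invertible in $\mathbb{C}\left( N\right) $ and that scalar $N$-multiplication interacts with the involution via $\overline{\lambda }^{N}$ exactly as in condition (ii), which I would verify before assembling the final identity.
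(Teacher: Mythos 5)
The proposition you were asked to prove has three parts, and your proposal addresses only two of them. For part (ii), the hermitian decomposition, your construction --- $N$-scaling $x\widehat{+}x^{\ast _{N}}$ and $x\widehat{-}x^{\ast _{N}}$ by the $\odot $-inverses of $\left( \overset{.}{2},\overset{..}{0}\right) $ and $\left( \overset{.}{2},\overset{..}{0}\right) \odot i_{\mathbb{C}\left( N\right) }$ respectively --- is exactly the paper's proof, and your observation that axiom (ii) of the $N$-involution drives the self-adjointness check is on target. For part (iii), the $N$-unitary case, your substitution of $x^{\ast _{N}}\widehat{\times }x=1_{\mathbb{A}}^{N}$ into the $C^{\ast _{N}}$-identity followed by the $\beta $-square-root passage is likewise the paper's argument. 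However, part (i) --- if $x$ is $N$-invertible then $x^{\ast _{N}}$ is $N$-invertible with $\left( x^{\ast _{N}}\right) ^{-1_{N}}=\left( x^{-1_{N}}\right) ^{\ast _{N}}$ --- is missing entirely from your proposal. The paper proves it in one line: apply the $N$-involution to $x\widehat{\times }x^{-1_{N}}=x^{-1_{N}}\widehat{\times }x=1_{\mathbb{A}}^{N}$, use the anti-multiplicativity $\left( x\widehat{\times }y\right) ^{\ast _{N}}=y^{\ast _{N}}\widehat{\times }x^{\ast _{N}}$ together with $\left( 1_{\mathbb{A}}^{N}\right) ^{\ast _{N}}=1_{\mathbb{A}}^{N}$, and read off that $\left( x^{-1_{N}}\right) ^{\ast _{N}}$ is a two-sided $N$-inverse of $x^{\ast _{N}}$.

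A secondary point: your announced strategy of reducing every assertion to the defining $C^{\ast _{N}}$-identity is misaimed for this proposition. Parts (i) and (ii) are purely algebraic statements about $\ast _{N}$-algebras and use no norm whatsoever; only part (iii) invokes $\overset{..}{\parallel }x^{\ast _{N}}\widehat{\times }x\overset{..}{\parallel }_{\mathbb{A}}=\overset{..}{\parallel }x\overset{..}{\parallel }_{\mathbb{A}}^{\overset{..}{2}}$, together with $\overset{..}{\parallel }1_{\mathbb{A}}^{N}\overset{..}{\parallel }_{\mathbb{A}}=\overset{..}{1}$. The norm identities you derive for $N$-self adjoint and $N$-normal elements, and the uniqueness of the hermitian decomposition, are correct but are not part of the statement, so they add length without closing the actual gap.
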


\textit{\qquad (i) If }$x$\textit{\ is }$N-$\textit{\ invertible, then }$%
x^{\ast _{N}}$\textit{\ is }$N-$\textit{invertible and }$\left( x^{\ast
_{N}}\right) ^{-1_{N}}=\left( x^{-1_{N}}\right) ^{\ast _{N}}.$

\textit{\qquad (ii) }$x=u\widehat{+}i_{%
\mathbb{C}
\left( N\right) }\widehat{\cdot }v,$\textit{\ where }$u$\textit{\ and }$v$%
\textit{\ are }$N-$\textit{hermitian elements of }$A$\textit{.}

\textit{\qquad (iii) If }$x$\textit{\ is }$N-$\textit{unitary element of }$A$%
\textit{, then }$\overset{..}{\parallel }x\overset{..}{\parallel }_{\mathbb{A%
}}=\overset{..}{1}.$

\begin{proof}
(i) If $x$\ is $N-$\ invertible, then there is an element $y$ in $\mathbb{A}$
such that\textit{\ }$x\widehat{\times }y=y\widehat{\times }x=1_{\mathbb{A}%
}^{N}.$ This implies that $\left( x\widehat{\times }y\right) ^{\ast
_{N}}=\left( y\widehat{\times }x\right) ^{\ast _{N}}=\left( 1_{\mathbb{A}%
}^{N}\right) ^{\ast _{N}}$ and hence, $y^{\ast _{N}}\widehat{\times }x^{\ast
_{N}}=x^{\ast _{N}}\widehat{\times }y^{\ast _{N}}=1_{\mathbb{A}}^{N}.$ So,
it follows that $x^{\ast _{N}}$ is $N-$invertible and $\left( x^{\ast
_{N}}\right) ^{-1_{N}}=y^{\ast _{N}}=\left( x^{-1_{N}}\right) ^{\ast _{N}},$
as required.

(ii) Let $u=\frac{1_{%
\mathbb{C}
\left( N\right) }}{\left( \overset{.}{2},\overset{..}{0}\right) \odot 1_{%
\mathbb{C}
\left( N\right) }}\widehat{\cdot }\left( x\widehat{+}x^{\ast _{N}}\right) $
and $v=\frac{1_{%
\mathbb{C}
\left( N\right) }}{\left( \overset{.}{2},\overset{..}{0}\right) \odot i_{%
\mathbb{C}
\left( N\right) }}\widehat{\cdot }\left( x\widehat{-}x^{\ast _{N}}\right) $.
Then, we have%
\begin{eqnarray*}
u^{\ast _{N}} &=&\left( \frac{1_{%
\mathbb{C}
\left( N\right) }}{\left( \overset{.}{2},\overset{..}{0}\right) \odot 1_{%
\mathbb{C}
\left( N\right) }}\widehat{\cdot }\left( x\widehat{+}x^{\ast _{N}}\right)
\right) ^{\ast _{N}}=\overline{\left( \frac{1_{%
\mathbb{C}
\left( N\right) }}{\left( \overset{.}{2},\overset{..}{0}\right) \odot 1_{%
\mathbb{C}
\left( N\right) }}\right) }\widehat{\cdot }\left( x\widehat{+}x^{\ast
_{N}}\right) ^{\ast _{N}} \\
&=&\overline{\left( \frac{\left( \overset{.}{1},\overset{..}{0}\right) }{%
\left( \overset{.}{2},\overset{..}{0}\right) }\right) }\widehat{\cdot }%
\left( x\widehat{+}x^{\ast _{N}}\right) ^{\ast _{N}}=\frac{\left( \overset{.}%
{1},\overset{..}{0}\right) }{\left( \overset{.}{2},\overset{..}{0}\right) }%
\widehat{\cdot }\left( x^{\ast _{N}}\widehat{+}x\right) \\
&=&\frac{1_{%
\mathbb{C}
\left( N\right) }}{\left( \overset{.}{2},\overset{..}{0}\right) \odot 1_{%
\mathbb{C}
\left( N\right) }}\widehat{\cdot }\left( x\widehat{+}x^{\ast _{N}}\right) =u
\end{eqnarray*}%
and%
\begin{eqnarray*}
v^{\ast _{N}} &=&\left( \frac{1_{%
\mathbb{C}
\left( N\right) }}{\left( \overset{.}{2},\overset{..}{0}\right) \odot i_{%
\mathbb{C}
\left( N\right) }}\widehat{\cdot }\left( x\widehat{-}x^{\ast _{N}}\right)
\right) ^{\ast _{N}}=\overline{\left( \frac{1_{%
\mathbb{C}
\left( N\right) }}{\left( \overset{.}{2},\overset{..}{0}\right) \odot i_{%
\mathbb{C}
\left( N\right) }}\right) }\widehat{\cdot }\left( x\widehat{-}x^{\ast
_{N}}\right) ^{\ast _{N}} \\
&=&\overline{\left( \frac{\left( \overset{.}{1},\overset{..}{0}\right) }{%
\left( \overset{.}{0},\overset{..}{2}\right) }\right) }\widehat{\cdot }%
\left( x\widehat{-}x^{\ast _{N}}\right) ^{\ast _{N}}=\frac{\left( \overset{.}%
{1},\overset{..}{0}\right) }{\left( \overset{.}{0},\overset{..}{-}\overset{..%
}{2}\right) }\widehat{\cdot }\left( x^{\ast _{N}}\widehat{-}x\right) \\
&=&\left[ \frac{\left( \overset{.}{1},\overset{..}{0}\right) }{\ominus
_{1}\left( \overset{.}{0},\overset{..}{2}\right) }\otimes _{1}\left( \ominus
_{1}\left( \overset{.}{1},\overset{..}{0}\right) \right) \right] \widehat{%
\cdot }\left( x\widehat{-}x^{\ast _{N}}\right) \\
&=&\frac{1_{%
\mathbb{C}
\left( N\right) }}{\left( \overset{.}{2},\overset{..}{0}\right) \odot i_{%
\mathbb{C}
\left( N\right) }}\widehat{\cdot }\left( x\widehat{-}x^{\ast _{N}}\right) =v.
\end{eqnarray*}%
Also, it is clear that%
\begin{equation*}
u\widehat{+}i_{%
\mathbb{C}
\left( N\right) }\widehat{\cdot }v=\left[ \frac{1_{%
\mathbb{C}
\left( N\right) }}{\left( \overset{.}{2},\overset{..}{0}\right) \odot 1_{%
\mathbb{C}
\left( N\right) }}\widehat{\cdot }\left( x\widehat{+}x^{\ast _{N}}\right) %
\right] \widehat{+}i_{%
\mathbb{C}
\left( N\right) }\widehat{\cdot }\left[ \frac{1_{%
\mathbb{C}
\left( N\right) }}{\left( \overset{.}{2},\overset{..}{0}\right) \odot i_{%
\mathbb{C}
\left( N\right) }}\widehat{\cdot }\left( x\widehat{-}x^{\ast _{N}}\right) %
\right] =x.
\end{equation*}%
The proof is completed.

(iii) If $x$\ is $N-$unitary element of $\mathbb{A}$, then $x\widehat{\times 
}x^{\ast _{N}}=x^{\ast _{N}}\widehat{\times }x=1_{\mathbb{A}}^{N}.$ So, we
can write $\overset{..}{\parallel }x\widehat{\times }x^{\ast _{N}}\overset{..%
}{\parallel }_{\mathbb{A}}=\overset{..}{\parallel }x^{\ast _{N}}\widehat{%
\times }x\overset{..}{\parallel }_{\mathbb{A}}=\overset{..}{\parallel }1_{%
\mathbb{A}}^{N}\overset{..}{\parallel }_{\mathbb{A}}.$ This means that $%
\overset{..}{\parallel }x\overset{..}{\parallel }_{\mathbb{A}}^{\overset{..}{%
2}}=\overset{..}{1}$ and so, $\overset{..}{\parallel }x\overset{..}{%
\parallel }_{\mathbb{A}}=\overset{..}{1}.$
\end{proof}

The following example illustrates Proposition 4 i).

\begin{example}
Consider the $C^{\ast _{N}}-$\textit{algebra }$C_{N}\left( \Omega \right) $
given in Example 12, the $N-$subalgebra $%
\mathbb{C}
\left( N\right) \left[ z\right] $ given in Example 4 and the $N-$invertible
functions $f,g$ in Example 5. Then, we have%
\begin{equation*}
f^{\ast _{N}}\left( z\right) =\overline{f\left( z\right) }=\overline{z+1}=%
\overline{z}+1_{%
\mathbb{C}
\left( N\right) }
\end{equation*}%
and%
\begin{equation*}
\left( f^{\ast _{N}}\right) ^{-1_{N}}\left( z\right) =\frac{1_{%
\mathbb{C}
\left( N\right) }}{\overline{z}+1_{%
\mathbb{C}
\left( N\right) }}=\overline{\left( \frac{1_{%
\mathbb{C}
\left( N\right) }}{z+1_{%
\mathbb{C}
\left( N\right) }}\right) }=\overline{g\left( z\right) }=g^{\ast _{N}}\left(
z\right) =\left( f^{-1}\right) ^{\ast _{N}}\left( z\right) ,
\end{equation*}%
that is, $f^{\ast _{N}}$ is $N-$invertible and $\left( f^{\ast _{N}}\right)
^{-1_{N}}=\left( f^{-1}\right) ^{\ast _{N}}.$
\end{example}

Now we present non-Newtonian version of self adjoint ideals using the
concept of an $N-$involution.

\begin{definition}
Let $\mathbb{A}$ be a $\ast _{N}-$\textit{algebra and }$\mathbb{B}$ be an $%
N- $subalgebra of $\mathbb{A}$. If $\mathbb{B}^{\ast _{N}}=\left\{ x^{\ast
_{N}}:x\in \mathbb{B}\right\} =\mathbb{B},$ then we say $\mathbb{B}$ is $N-$%
self adjoint.
\end{definition}

\begin{definition}
If $\mathbb{A}$ is a $\ast _{N}-$\textit{algebra, }$\mathbb{B}$ is an $N-$%
subalgebra of $\mathbb{A}$ and $\mathbb{B}$ is $N-$self adjoint, then $%
\mathbb{B}$ is called a $\ast -$subalgebra of $\mathbb{A}$.
\end{definition}

\begin{definition}
If $\mathbb{A}$ is a $\ast _{N}-$\textit{algebra, }$I$ is an $N-$ideal of $%
\mathbb{A}$ and $I$ is $N-$self adjoint, then $I$ is called an $N-$self
adjoint ideal of $\mathbb{A}$.
\end{definition}

\begin{example}
Consider the $C^{\ast _{N}}-$\textit{algebra }$C_{N}\left( \Omega \right) $
given in Example 12 for $\Omega =\left\{ z\in 
\mathbb{C}
\left( N\right) :\overset{..}{\parallel }z\overset{..}{\parallel }\overset{..%
}{\leq }\frac{\overset{..}{1}}{\overset{..}{2}}\beta \right\} $ and the $N-$%
ideal $I_{z}$ given in Example 8. Since%
\begin{equation*}
I_{z}^{\ast _{N}}=\left\{ f^{\ast _{N}}:f\in I_{z}\right\}
\end{equation*}%
and $f^{\ast _{N}}\left( z\right) =\overline{f\left( z\right) }=0_{%
\mathbb{C}
\left( N\right) }$, we have $I_{z}^{\ast _{N}}=I_{z}.$ Thus, $I_{z}$ is an $%
N-$self adjoint ideal of $C_{N}\left( \Omega \right) $.
\end{example}

The next definition is non-Newtonian version of a $\ast -$homomorphism.

\begin{definition}
Let $\mathbb{A}$ and $\mathbb{B}$ be $C^{\ast _{N}}-$\textit{algebras and }$%
\varphi _{N}:\mathbb{A}\rightarrow \mathbb{B}$ be a $%
\mathbb{C}
\left( N\right) -$algebra homomorphism. If $\varphi _{N}\left( x^{\ast
_{N}}\right) =\varphi _{N}\left( x\right) ^{\ast _{N}}$ for all $x\in 
\mathbb{A}$, then the homomorphism $\varphi _{N}$ is called a $\ast _{N}-$%
\textit{homomorphism. If \ in addition }$\varphi _{N}$ is a bijection, then
it is a $\ast _{N}-$isomorphism.
\end{definition}

\begin{example}
Consider the $C^{\ast _{N}}-$\textit{algebra }$C_{N}\left( \Omega \right) $
given in Example 12 for $\Omega =\left\{ z\in 
\mathbb{C}
\left( N\right) :\overset{..}{\parallel }z\overset{..}{\parallel }\overset{..%
}{\leq }\frac{\overset{..}{1}}{\overset{..}{2}}\beta \right\} $ and the $%
\mathbb{C}
\left( N\right) -$\textit{linear functional} $\varphi _{N}$ given in Example
7. Then, we get%
\begin{equation*}
\varphi _{N}\left( f^{\ast _{N}}\right) =f^{\ast _{N}}\left( 0_{%
\mathbb{C}
\left( N\right) }\right) =\overline{f\left( 0_{%
\mathbb{C}
\left( N\right) }\right) }=\overline{\varphi _{N}\left( f^{\ast }\right) }%
=\varphi _{N}\left( f\right) ^{\ast _{N}}.
\end{equation*}%
So, $\varphi _{N}$ is a $\ast _{N}-$\textit{homomorphism.}
\end{example}

In the closing of this section, we introduce two theorems about $\ast _{N}-$%
homomorphisms, and also, we give an example which satisfy the requirements
of these theorems.

\begin{theorem}
Let $\mathbb{A}$ and $\mathbb{B}$ be $\ast _{N}-$algebras and $\varphi _{N}$
be a $\ast _{N}-$homomorphism from $\mathbb{A}$ to $\mathbb{B}$. Then, $%
Ker_{N}\left( \varphi _{N}\right) $ is an $N-$self adjoint ideal of $\mathbb{%
A}$.
\end{theorem}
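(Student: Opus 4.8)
The plan is to invoke Theorem 3 for the ideal structure and then verify self-adjointness directly from the defining property of a $\ast _{N}-$homomorphism. Since every $\ast _{N}-$homomorphism is in particular a $\mathbb{C}\left( N\right) -$algebra homomorphism, Theorem 3 already guarantees that $Ker_{N}\left( \varphi _{N}\right) $ is an $N-$ideal of $\mathbb{A}$. Hence the only genuinely new content is to establish that $Ker_{N}\left( \varphi _{N}\right) ^{\ast _{N}}=Ker_{N}\left( \varphi _{N}\right) $.

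First I would record the auxiliary fact that the $N-$involution fixes the zero element, that is, $\left( 0_{\mathbb{B}}^{N}\right) ^{\ast _{N}}=0_{\mathbb{B}}^{N}$. This follows from additivity of the $N-$involution (condition (i) in its definition): writing $0_{\mathbb{B}}^{N}=0_{\mathbb{B}}^{N}\widehat{+}0_{\mathbb{B}}^{N}$ and applying $\ast _{N}$ gives $\left( 0_{\mathbb{B}}^{N}\right) ^{\ast _{N}}=\left( 0_{\mathbb{B}}^{N}\right) ^{\ast _{N}}\widehat{+}\left( 0_{\mathbb{B}}^{N}\right) ^{\ast _{N}}$, and cancelling one copy yields the claim. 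With this in hand, the inclusion $Ker_{N}\left( \varphi _{N}\right) ^{\ast _{N}}\subseteq Ker_{N}\left( \varphi _{N}\right) $ is immediate: taking any $x\in Ker_{N}\left( \varphi _{N}\right) $, so that $\varphi _{N}\left( x\right) =0_{\mathbb{B}}^{N}$, and using that $\varphi _{N}$ is a $\ast _{N}-$homomorphism together with the fact just recorded, I compute $\varphi _{N}\left( x^{\ast _{N}}\right) =\varphi _{N}\left( x\right) ^{\ast _{N}}=\left( 0_{\mathbb{B}}^{N}\right) ^{\ast _{N}}=0_{\mathbb{B}}^{N}$, whence $x^{\ast _{N}}\in Ker_{N}\left( \varphi _{N}\right) $.

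Finally I would obtain the reverse inclusion from the involutive identity $\left( x^{\ast _{N}}\right) ^{\ast _{N}}=x$ (condition (iv)): given $x\in Ker_{N}\left( \varphi _{N}\right) $, the element $x^{\ast _{N}}$ lies in $Ker_{N}\left( \varphi _{N}\right) $ by the previous step and satisfies $\left( x^{\ast _{N}}\right) ^{\ast _{N}}=x$, so $x\in Ker_{N}\left( \varphi _{N}\right) ^{\ast _{N}}$. The two inclusions give $Ker_{N}\left( \varphi _{N}\right) ^{\ast _{N}}=Ker_{N}\left( \varphi _{N}\right) $, which, combined with the ideal property supplied by Theorem 3, shows that $Ker_{N}\left( \varphi _{N}\right) $ is an $N-$self adjoint ideal of $\mathbb{A}$. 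I do not expect any real obstacle here: the argument is routine once the behaviour of $\ast _{N}$ on the zero element is pinned down, and the only point requiring mild care is treating both set inclusions rather than a single containment — a symmetry that the involutivity of $\ast _{N}$ makes essentially automatic.
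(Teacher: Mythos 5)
Your proof is correct and takes essentially the same route as the paper's: cite Theorem 3 for the $N-$ideal structure, then prove $\left( Ker_{N}\left( \varphi _{N}\right) \right) ^{\ast _{N}}=Ker_{N}\left( \varphi _{N}\right) $ by two inclusions using the $\ast _{N}-$homomorphism property, the involutive identity $\left( x^{\ast _{N}}\right) ^{\ast _{N}}=x$, and the fact $\left( 0_{\mathbb{B}}^{N}\right) ^{\ast _{N}}=0_{\mathbb{B}}^{N}$. The only differences are cosmetic: you justify $\left( 0_{\mathbb{B}}^{N}\right) ^{\ast _{N}}=0_{\mathbb{B}}^{N}$ explicitly (the paper uses it silently), and your first inclusion argues directly from $x\in Ker_{N}\left( \varphi _{N}\right) $ rather than starting from an element of $\left( Ker_{N}\left( \varphi _{N}\right) \right) ^{\ast _{N}}$ as the paper does.
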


\begin{proof}
We know that $Ker_{N}\left( \varphi _{N}\right) $ is an $N-$ideal of $%
\mathbb{A}$ by Theorem 3. Then, it is enough to show that $Ker_{N}\left(
\varphi _{N}\right) $ is $N-$self adjoint, that is, $\left( Ker_{N}\left(
\varphi _{N}\right) \right) ^{\ast _{N}}=Ker_{N}\left( \varphi _{N}\right) $
holds.

Let $x\in \left( Ker_{N}\left( \varphi _{N}\right) \right) ^{\ast _{N}}.$
Then, $x^{\ast _{N}}\in Ker_{N}\left( \varphi _{N}\right) $ by definition of 
$N-$involution. Hence, $\varphi _{N}\left( x^{\ast _{N}}\right) =\varphi
_{N}\left( x\right) ^{\ast _{N}}=0_{\mathbb{B}}^{N}.$ This implies that $%
\varphi _{N}\left( x\right) =\left( 0_{\mathbb{B}}^{N}\right) ^{\ast
_{N}}=0_{\mathbb{B}}^{N}$ and so $x\in Ker_{N}\left( \varphi _{N}\right) .$
Then, we have $\left( Ker_{N}\left( \varphi _{N}\right) \right) ^{\ast
_{N}}\subset Ker_{N}\left( \varphi _{N}\right) .$

Conversely, let $x\in Ker_{N}\left( \varphi _{N}\right) .$ Then, we have $%
\varphi _{N}\left( x\right) =0_{\mathbb{B}}^{N}.$ Since $\varphi _{N}\left(
x^{\ast _{N}}\right) =\varphi _{N}\left( x\right) ^{\ast _{N}}=\left( 0_{%
\mathbb{B}}^{N}\right) ^{\ast _{N}}=0_{\mathbb{B}}^{N},$ we write $x^{\ast
_{N}}\in Ker_{N}\left( \varphi _{N}\right) $ and hence $x\in \left(
Ker_{N}\left( \varphi _{N}\right) \right) ^{\ast _{N}}.$ This means that $%
Ker_{N}\left( \varphi _{N}\right) \subset \left( Ker_{N}\left( \varphi
_{N}\right) \right) ^{\ast _{N}}.$ The proof is completed.
\end{proof}

\begin{theorem}
Let $\mathbb{A}$ and $\mathbb{B}$ be $\ast _{N}-$algebras and $\varphi _{N}$
be a $\ast _{N}-$homomorphism from $\mathbb{A}$ to $\mathbb{B}$. Then, $%
\varphi _{N}\left( \mathbb{A}\right) $ is a $\ast _{N}-$subalgebra of $%
\mathbb{B}$.
\end{theorem}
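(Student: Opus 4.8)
The plan is to show that $\varphi_N(\mathbb{A})$ is closed under the $N$-involution of $\mathbb{B}$, since Theorem 4 already gives that $\varphi_N(\mathbb{A})$ is an $N$-subalgebra of $\mathbb{B}$. To prove it is a $\ast_N$-subalgebra, by Definition 28 I need to establish that $\varphi_N(\mathbb{A})^{\ast_N} = \varphi_N(\mathbb{A})$, i.e.\ that $\left\{ y^{\ast_N} : y \in \varphi_N(\mathbb{A}) \right\} = \varphi_N(\mathbb{A})$.

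First I would establish the forward inclusion $\varphi_N(\mathbb{A})^{\ast_N} \subset \varphi_N(\mathbb{A})$. Take an arbitrary element $y \in \varphi_N(\mathbb{A})$; then there exists $x \in \mathbb{A}$ with $y = \varphi_N(x)$. Applying the $\ast_N$-homomorphism property (that $\varphi_N(x^{\ast_N}) = \varphi_N(x)^{\ast_N}$ for all $x \in \mathbb{A}$), I obtain
\begin{equation*}
y^{\ast_N} = \varphi_N(x)^{\ast_N} = \varphi_N\left( x^{\ast_N} \right).
\end{equation*}
Since $x^{\ast_N} \in \mathbb{A}$, this exhibits $y^{\ast_N}$ as an element of $\varphi_N(\mathbb{A})$, giving the inclusion $\varphi_N(\mathbb{A})^{\ast_N} \subset \varphi_N(\mathbb{A})$.

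For the reverse inclusion $\varphi_N(\mathbb{A}) \subset \varphi_N(\mathbb{A})^{\ast_N}$, I would take $y = \varphi_N(x) \in \varphi_N(\mathbb{A})$ and use the involution property $\left( x^{\ast_N} \right)^{\ast_N} = x$ from Definition 24 (iv). Writing $y = \varphi_N\left( \left( x^{\ast_N} \right)^{\ast_N} \right) = \varphi_N\left( x^{\ast_N} \right)^{\ast_N}$, and noting that $\varphi_N\left( x^{\ast_N} \right) \in \varphi_N(\mathbb{A})$, I conclude $y \in \varphi_N(\mathbb{A})^{\ast_N}$. Combining both inclusions yields $\varphi_N(\mathbb{A})^{\ast_N} = \varphi_N(\mathbb{A})$, so $\varphi_N(\mathbb{A})$ is $N$-self adjoint and hence a $\ast_N$-subalgebra of $\mathbb{B}$. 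I do not anticipate a genuine obstacle here: the argument is a direct bookkeeping of the $\ast_N$-homomorphism identity together with the involutivity of $^{\ast_N}$, exactly parallel to the proof of Theorem 5 that $Ker_N(\varphi_N)$ is $N$-self adjoint.
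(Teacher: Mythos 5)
Your proposal is correct and follows essentially the same route as the paper: invoke Theorem 4 for the $N$-subalgebra part, then prove $\left( \varphi _{N}\left( \mathbb{A}\right) \right) ^{\ast _{N}}=\varphi _{N}\left( \mathbb{A}\right) $ by a double inclusion using the $\ast _{N}$-homomorphism identity together with the involutivity $\left( x^{\ast _{N}}\right) ^{\ast _{N}}=x$. The only difference is cosmetic (which inclusion carries the involutivity step), so there is nothing to add.
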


\begin{proof}
We know that $\varphi _{N}\left( \mathbb{A}\right) $ is an $N-$subalgebra of 
$\mathbb{B}$ by Theorem 4. Then, it is enough to show that $\varphi
_{N}\left( \mathbb{A}\right) $ is $N-$self adjoint, that is, $\left( \varphi
_{N}\left( \mathbb{A}\right) \right) ^{\ast _{N}}=\varphi _{N}\left( \mathbb{%
A}\right) $ holds.

Let $y\in \left( \varphi _{N}\left( \mathbb{A}\right) \right) ^{\ast _{N}}.$
Then, $y^{\ast _{N}}\in \varphi _{N}\left( \mathbb{A}\right) $ by definition
of $N-$involution. In this case, there exists an element $x\in \mathbb{A}$
such that $\varphi _{N}\left( x\right) =y^{\ast _{N}}.$ Hence, $\varphi
_{N}\left( x\right) ^{\ast _{N}}=\varphi _{N}\left( x^{\ast _{N}}\right) =y.$
This implies that $y\in \varphi _{N}\left( \mathbb{A}\right) $ and so $%
\left( \varphi _{N}\left( \mathbb{A}\right) \right) ^{\ast _{N}}\subset
\varphi _{N}\left( \mathbb{A}\right) .$

Conversely, let $y\in \varphi _{N}\left( \mathbb{A}\right) .$ Then, there
exists an element $x\in \mathbb{A}$ such that $\varphi _{N}\left( x\right)
=y.$ So since $\varphi _{N}\left( x\right) ^{\ast _{N}}=\varphi _{N}\left(
x^{\ast _{N}}\right) =y^{\ast _{N}},$ we write $y^{\ast _{N}}\in \varphi
_{N}\left( \mathbb{A}\right) $ and hence $y\in \left( \varphi _{N}\left( 
\mathbb{A}\right) \right) ^{\ast _{N}}.$ This means that $\varphi _{N}\left( 
\mathbb{A}\right) \subset \left( \varphi _{N}\left( \mathbb{A}\right)
\right) ^{\ast _{N}}.$ The proof is completed.
\end{proof}

\begin{example}
Consider the $C^{\ast _{N}}-$\textit{algebra }$C_{N}\left( \Omega \right) $
given in Example 12 for $\Omega =\left\{ z\in 
\mathbb{C}
\left( N\right) :\overset{..}{\parallel }z\overset{..}{\parallel }\overset{..%
}{\leq }\frac{\overset{..}{1}}{\overset{..}{2}}\beta \right\} $ and the $%
\ast _{N}-$\textit{homomorphism} $\varphi _{N}$ given in Example 16. $%
Ker_{N}\left( \varphi _{N}\right) =\left\{ f\in C_{N}\left( \Omega \right)
:f\left( 0_{%
\mathbb{C}
\left( N\right) }\right) =0_{%
\mathbb{C}
\left( N\right) }\right\} $ is an $N-$self adjoint ideal of $C_{N}\left(
\Omega \right) $ and $\varphi _{N}\left( C_{N}\left( \Omega \right) \right) $
is an $\ast _{N}-$subalgebra of $%
\mathbb{C}
\left( N\right) .$
\end{example}

\section{Conclusion and Future Work}

We have introduced a new version of $C^{\ast }-$algebras which called a
non-Newtonian $C^{\ast }-$algebra. It is a relatively new addition to the
existing literature and generalizes known $C^{\ast }-$algebras.

Each choice of specific isomorphisms for $\alpha $ and $\beta $ determines a 
$\ast -$calculus \cite{11}. Geometric calculus obtained by choosing $\alpha
=I$ and $\beta =\exp $ \cite{11} is one of the most popular $\ast -$calculi
and has a wide variety of useful applications. In this direction, the
concept of a non-Newtonian $C^{\ast }-$algebra can be further enrich by
introducing the idea of geometric $C^{\ast }-$algebra wherein we replace $%
\ast -$points $\left( \overset{.}{a},\overset{..}{b}\right) $ in the $\ast -$%
complex field $%
\mathbb{C}
\left( N\right) $ by the $\ast -$points $\left( a,e^{b}\right) $ with
respect to the geometric calculus. Also, the notions of spectrum and
positive element in the sense of geometric calculus can be defined and thus
fixed point theorems can be obtained by contructing non-Newtonian $C^{\ast
}- $algebra valued metric spaces.

\end{document}